\newcommand{\sv}{\mathrm{sv}}
\newcommand{\Pet}{\mathrm{Pet}}
\newcommand{\cusp}{\mathrm{cusp}}
\newcommand{\B}{\mathrm{B}}
\newcommand{\M}{\mathrm{M}}
\DeclareRobustCommand\longtwoheadrightarrow
\def\ord{{\rm ord}}
\def\Sym{\mathop{\rm Sym}}
\def\Hom{\mathop{\rm Hom}}
\def\GL{\mathop{\rm GL}}
\def\im{\mathop{\rm im}}
\def\Res{\mathop{\rm Res}}
\def\Im{\mathop{\rm Im}}
\def\Re{\mathop{\rm Re}}
\def\Spec{\mathop{\rm Spec}}
\def\SL{\mathop{\rm SL}}
\def\id{\text{\rm id}}
\def\an{\text{\rm an}}
\def\dR{\text{\rm dR}}
\def\comp{\text{\rm comp}}
\def\defeq{\coloneqq}
\def\tensor{\otimes}
\def\to{\longrightarrow}
\def\mapsto{\longmapsto}
\newtheorem{theorem}{Theorem}[section]
\newtheorem{prop}[theorem]{Proposition}
\newtheorem{lemma}[theorem]{Lemma}
\newtheorem{coro}[theorem]{Corollary}
\theoremstyle{definition}
\newtheorem{ex}[theorem]{Example}
\newtheorem{defi}[theorem]{Definition}
\newtheorem{obs}[theorem]{Remark}
\numberwithin{equation}{section}
\title{On coefficients of Poincaré series and single-valued periods of modular forms}
\author{Tiago J. Fonseca}
\date{\today}
\subjclass[2010]{11F30, 11F67, 32G20, 14J15}
\address{Mathematical Institute, University of Oxford, Andrew Wiles Building, Radcliffe Observatory Quarter, Woodstock Road, Oxford, OX2 6GG, United Kingdon}
\email{tiago.jardimdafonseca@maths.ox.ac.uk}
\begin{document}
\maketitle

\begin{abstract}
We prove that the field generated by the Fourier coefficients of weakly holomorphic Poincaré series of a given level $\Gamma_0(N)$ and weight $k\ge 2$ coincides with the field generated by the single-valued periods of a certain motive attached to $\Gamma_0(N)$. This clarifies the arithmetic nature of such Fourier coefficients and generalises previous formulas of Brown and Acres--Broadhurst giving explicit series expansions for the single-valued periods of some modular forms. Our proof is based on Bringmann--Ono's construction of harmonic lifts of Poincaré series. 
\end{abstract}

\tableofcontents 

\section{Introduction}

\subsection{Fourier coefficients of Poincaré series}

Let $N\ge 1$ be an integer and consider Hecke's congruence subgroup
$$
\Gamma_0(N) =\left\{\left(\begin{array}{cc}
                            a & b \\
                            c & d
                          \end{array}\right)\in {\SL}_2(\mathbb{Z}) \text{ ; } c \equiv 0 \mod N \right\} \le {\SL}_2(\mathbb{Z})\text{.}
$$
For every integer $m$ and every even integer $k> 2$, we define a \emph{Poincaré series} (cf. \cite{poincare11}, \cite{CS17})
\begin{equation}\label{eq:intro-poincare-def}
P_{m,k,N}(\tau) = \sum_{\gamma\, \in\, \Gamma_\infty\backslash\Gamma_0(N)}\frac{e^{2\pi i m \gamma \cdot \tau}}{j(\gamma,\tau)^k}\text{, }\qquad \tau \in \mathbb{H}\text{,}
\end{equation}
where $\mathbb{H}$ denotes the Poincaré half-plane with its usual $\SL_2(\mathbb{R})$-action $\gamma \cdot \tau = (a\tau+b)(c\tau+d)^{-1}$, $j(\gamma,\tau) = c\tau+d$ is the factor of automorphy, and $\Gamma_{\infty}\le \Gamma_0(N)$ is the stabiliser of the cusp at infinity, given by the condition $c=0$.

The series (\ref{eq:intro-poincare-def}) converges absolutely and uniformly on compact subsets of $\mathbb{H}$ and defines a weakly holomorphic modular form of weight $k$ and level $\Gamma_0(N)$, meaning that
$$
P_{m,k,N}(\gamma\cdot \tau) = j(\gamma,\tau)^kP_{m,k,N}(\tau)
$$
for every $\gamma \in \Gamma_0(N)$, and that $P_{m,k,N}$ is holomorphic on $\mathbb{H}$, and meromorphic at the cusps. When $m=0$, we obtain an Eisenstein series; this case will often be excluded from our discussion. Following Petersson \cite{petersson32}, we shall also consider Poincaré series in weight $k=2$, which are defined via a variation of Hecke's trick.

To a certain extent, the theory of cusp forms reduces to the study of Poincaré series of positive index $m$, as it is well known that $P_{m,k,N}$, for $m>0$, generate the finite-dimensional $\mathbb{C}$-vector space of cusp forms $S_{k}(\Gamma_0(N))$. A natural question is then to understand the Fourier coefficients $a_n(P_{m,k,N})$, defined by
$$
P_{m,k,N} = \sum_{n\ge 1} a_n(P_{m,k,N}) q^n\text{, } \qquad q=e^{2\pi i \tau}\text{.}
$$
When the index $m$ is positive, a classical computation yields the formulas
\begin{equation}\label{eq:intro-Fourier-positive}
  a_n(P_{m,k,N}) = \delta_{m,n}+ 2\pi (-1)^{\frac{k}{2}}\left(\frac{n}{m} \right)^{\frac{k-1}{2}}\sum_{c\ge 1\text{, } N \mid c}\frac{K(m,n;c)}{c}J_{k-1}\left(\frac{4\pi\sqrt{mn}}{c} \right) \text{, }\qquad n\ge 1\text{,}
\end{equation}
where $K(a,b;c) = \sum_{x \in (\mathbb{Z}/c\mathbb{Z})^{\times}}e^{\frac{2\pi i}{c}(ax + bx^{-1})}$ is a Kloosterman sum and $J_{k-1}$ is the $J$-Bessel function of order $k-1$; when the index is negative, we have
$$
P_{-m,k,N} = q^{-m} + \sum_{n\ge 1} a_{n}(P_{-m,k,N})q^n\text{,}\qquad q=e^{2\pi i \tau}\text{,}
$$
with
\begin{equation}\label{eq:intro-Fourier-negative}
  a_n(P_{-m,k,N}) = 2\pi (-1)^{\frac{k}{2}}\left(\frac{n}{m} \right)^{\frac{k-1}{2}}\sum_{c\ge 1\text{, } N \mid c}\frac{K(-m,n;c)}{c}I_{k-1}\left(\frac{4\pi\sqrt{mn}}{c} \right) \text{, }\qquad n\ge 1\text{,}
\end{equation}
where $I_{k-1}$ is an $I$-Bessel function of order $k-1$.

The expressions (\ref{eq:intro-Fourier-positive}) and (\ref{eq:intro-Fourier-negative}) play a prominent role in questions involving growth estimates of Fourier coefficients of modular forms (see \cite{sarnak90}), but are unsatisfying from an algebraic point of view. Although the real numbers $a_n(P_{m,k,N})$, for positive $m$, are expected to be transcendental whenever they are not zero\footnote{Deciding whether a Fourier coefficient of a Poincaré series is nonzero is a hard problem in general. For instance, Lehmer's conjecture on the non-vanishing of Ramanujan's $\tau$ function is equivalent to the assertion that $P_{m,12,1}$ does not vanish identically for every $m>0$.}, the Fourier coefficients of Poincaré series of \emph{negative} index, $a_n(P_{-m,k,N})$, can satisfy remarkable rationality properties.

For instance, a classical result of Petersson \cite{petersson32} and Rademacher \cite{rademacher38} yields
$$
P_{-1,2,1} = -Dj = q^{-1} - 196884 q - 42987520 q^2 - 2592899910 q^3 - \cdots \text{,}
$$
where $j$ denotes Klein's modular invariant and $D$ is the derivation $\frac{1}{2\pi i}\frac{d}{d\tau} = q\frac{d}{dq}$. Numerical experimentation provides other examples of Poincaré series with integral Fourier coefficients, such as
$$
P_{-2,6,4} = q^{-2} -35q^2 + 4096q^4  - 97686q^6 +  \cdots
$$
and
$$
P_{-1,4,9} = q^{-1} + 2q^2 -49q^5 + 48q^8 + 771q^{11} - \cdots\text{,}
$$
but most Poincaré series with negative index are also expected to have transcendental Fourier coefficients.

The present work is based on the observation that all of the above rationality phenomena can be `explained' by cohomology. The case of $P_{-1,4,9}$ was proved by Bruinier et al. \cite{BOR08} using the theory of harmonic Maass forms, the key ingredient being the fact that the unique normalised newform in $S_{4}(\Gamma_0(9))$ has CM. In the end of \cite{candelori14}, Candelori briefly discusses how to reformulate this result in terms of a certain cohomology group with local coefficients over the modular curve $X_1(9)$.

This hints to the fact that Fourier coefficients of Poincaré series, in general, are cohomological invariants. In this article, we show that $a_n(P_{m,k,N})$, $m\in \mathbb{Z}\setminus\{0\}$, are all rational expressions in \emph{periods} of the modular motive $H^1_{\cusp}(\mathcal{Y}_0(N), V_k)$ (see Sect. \ref{subsec:intro-svp} below).  These periods, in turn, are related to the classical  periods $\omega^+$ and $\omega^-$ of cusp forms, and also to their `quasi-periods' $\eta^+$ and $\eta^-$, which have recently been introduced by Brown and Hain \cite{BH18}. As we shall explain below, our result is actually more precise: it identifies the $\mathbb{Q}$-extension generated by all the Fourier coefficients $a_n(P_{m,k,N})$, for $m \in \mathbb{Z}\setminus\{0\}$, with the $\mathbb{Q}$-extension generated by a certain subclass of periods of $H^1_{\cusp}(\mathcal{Y}_0(N), \mathcal{V}_k)$, the \emph{single-valued periods}, in the terminology of Brown \cite{brown13}, \cite{BD19}. A first manifestation of identities relating Fourier coefficients of Poincaré series and single-valued periods appears already in the work of Brown \cite{brown18}. Subsequent work of Broadhurst and Acres and Broadhurst \cite{AB19} provided many other explicit examples of such identities, under the guise of `Rademacher sums' (see below). Our work can also be regarded as a generalisation of Brown's and Acres--Broadhurst's formulas.

\subsection{Single-valued periods of modular motives} \label{subsec:intro-svp}

Given a smooth algebraic variety $X$ defined over a number field $K\subset \mathbb{C}$ and an integer $i\ge 0$, there is a canonical comparison isomorphism \cite{grothendieck66}
$$
\comp : H_{\dR}^i(X)\tensor_K \mathbb{C} \stackrel{\sim}{\to} H_{\B}^i(X)\tensor_{\mathbb{Q}}\mathbb{C}
$$
given by `integration of differential forms', where $H_{\dR}^i(X)$ denotes the $i$th algebraic de Rham cohomology with coefficients in $K$ and $H^i_{\B}(X)$ denotes the $i$th Betti cohomology of $X$, defined as the dual $\mathbb{Q}$-vector space of the $i$th singular homology $H_i(X(\mathbb{C});\mathbb{Q}) = H_i(X(\mathbb{C});\mathbb{Z})\tensor_{\mathbb{Z}}\mathbb{Q}$ of the complex manifold $X(\mathbb{C})$. A \emph{period} \cite{KZ01} of $H^i(X)$ is a complex number of the form
$$
\langle \sigma,\comp(\alpha) \rangle = \int_{\sigma}\alpha \in \mathbb{C}
$$
for some $\alpha \in H^i_{\dR}(X)$ and $\sigma \in H_i(X(\mathbb{C});\mathbb{Q}) = H^i_{\B}(X)^{\vee}$.

When $K\subset \mathbb{R}$, the continuous involution $X(\mathbb{C}) \to X(\mathbb{C})$ given by complex conjugation induces a $\mathbb{Q}$-linear involution
$$
F_{\infty}: H^i_{\B}(X) \to H^i_{\B}(X)\text{,}
$$
sometimes called the `real Frobenius'. By transport of structures, we  obtain an involution $\sv \defeq \comp^{-1} \circ (F_{\infty}\tensor \id) \circ \comp$ on $H^i_{\dR}(X)\tensor_K\mathbb{C}$, which can actually be shown to be defined over $\mathbb{R}$; this is the \emph{single-valued involution}:
$$
\sv : H^i_{\dR}(X)\tensor_K \mathbb{R} \to H^i_{\dR}(X)\tensor_K \mathbb{R}\text{.}
$$
A \emph{single-valued period} \cite{BD19} of $H^i(X)$ is a real number of the form
$$
\langle \varphi, \sv(\alpha)\rangle \in \mathbb{R}
$$
for some $\alpha \in H^i_{\dR}(X)$ and $\varphi \in H^i_{\dR}(X)^{\vee}$.

Single-valued periods can be understood in terms of periods as follows. Given a period matrix $P$ of $H^i(X)$, that is, a matrix of $\comp$ in a $K$-basis of $H_{\dR}^i(X)$ and a $\mathbb{Q}$-basis of $H_{\B}^i(X)$, a single-valued period matrix is given by
\begin{equation}\label{eq:intro-sv-matrix}
S = P^{-1}\overline{P} = P^{-1}F_{\infty}P\text{.}
\end{equation}
The terminology `single-valued period' has its origin in the theory of period functions (i.e., periods of families of algebraic varieties); we refer to \cite{BD19} for further details and motivation.

\begin{ex}\label{ex:intro-X_0(11)}
  Consider the elliptic curve $E$ over $\mathbb{Q}$ given by the equation
  $$
  y^2 + y = x^3 - x^2 -10x -20\text{.}
  $$
  A basis for its first de Rham cohomology $H^1_{\dR}(E)$ is represented by $(\omega,\eta) = (\frac{dx}{2y +1}, x\frac{dx}{2y+1})$. Given a oriented basis $(\gamma_1,\gamma_2)$ of $H_1(E(\mathbb{C});\mathbb{Z})$, we obtain a period matrix
  $$
P = \left(\begin{array}{cc}
             \omega_1 & \eta_1 \\
             \omega_2 & \eta_2
      \end{array}\right) = \left(\begin{array}{cc}
             \int_{\gamma_1}\omega & \int_{\gamma_1}\eta \\
             \int_{\gamma_2}\omega & \int_{\gamma_2}\eta
      \end{array}\right) \in {\GL}_2(\mathbb{C})
    $$
    satisfying $\det P = 2\pi i$ (Legendre's period relation). The single-valued period matrix with respect to the basis $([\omega],[\eta])$ is then given by
 $$
S = \frac{1}{2\pi i}\left(\begin{array}{cc}
             \overline{\omega}_1\eta_2-\overline{\omega}_2\eta_1 & \overline{\eta}_1\eta_2 - \eta_1\overline{\eta}_2 \\
             \omega_1\overline{\omega}_2 - \overline{\omega}_1\omega_2 & \omega_1\overline{\eta}_2-\omega_2\overline{\eta}_1
      \end{array} \right) \in {\GL}_2(\mathbb{R})\text{.}
 $$
 It does not depend on the choice of $(\gamma_1,\gamma_2)$, and necessarily satisfies $S^2=\id$ and $\text{Tr }S = 0$. Numerically, we can compute a period lattice basis $\omega_1,\omega_2$ in \texttt{Sage} \cite{sage} and obtain the quasi-periods $\eta_1,\eta_2$ by integrating the Weierstrass $\wp$ function (here, $(\omega,\eta) = (dz, \wp(z)dz + \frac{1}{3}dz)$). We get
 $$
P = \left(\begin{array}{cc}
             1.269209... & -2.214333... \\
             0.634604... + i 1.458816... & -1.107166... + i 2.405338...
      \end{array}\right)
    $$
    and
    $$
S = \left(\begin{array}{cc}
             -0.028238... & -1.695389... \\
             -0.589364...  & 0.028238...
      \end{array}\right)\text{.}
    $$
\end{ex}

More generally, we can talk about periods and single-valued periods of motives, which can be thought as compatible systems of realisations (de Rham, Betti, étale, etc.) of `geometric origin'. For instance, the classical periods $\omega^+$, $\omega^-$ of a normalised Hecke cusp form $f \in S_{k+2}(\Gamma_0(1))$ (\cite{manin73}, \cite{CS17} Chapter 11) are defined as real numbers for which there is a decomposition
$$
\int_{0}^{i\infty}f(\tau)(X-\tau Y)^kd\tau = \omega^+ P^+ + i \omega^-P^-\text{, }\qquad P^{\pm} \in K_f[X,Y]\text{,}
$$
with $P^+$ (resp. $P^-$) an even (resp. odd) homogeneous polynomial in $X,Y$. Here, $K_f=\mathbb{Q}(a_n(f)\text{ ; } n \ge 1)$ is the totally real number field generated by the Fourier coefficients of $f$. Up to a power of $2\pi i$, the numbers $\omega^+$ and $\omega^-$ are periods of a pure motive $H_f$, occurring in the cohomology of a Kuga--Sato variety \cite{scholl90}.

In practice, instead of going to the Kuga--Sato variety, it is convenient to stay at the modular curve and to work with cohomology with local coefficients. For every integer $N\ge 1$ we denote by $\mathcal{Y}_0(N)$ the moduli stack over $\mathbb{Q}$ classifying elliptic curves endowed with a cyclic subgroup of order $N$. Its analytification is isomorphic to the orbifold quotient $\Gamma_0(N)\backslash \! \backslash \mathbb{H}$, and its corresponding coarse moduli space is isomorphic to the usual open modular curve $Y_0(N)$ over $\mathbb{Q}$. Let $\mathcal{E}$ be the universal elliptic curve over $\mathcal{Y}_0(N)$.

Explicitly, there is a (mixed) motive $H^1(\mathcal{Y}_0(N),V_k)$ over $\mathbb{Q}$ whose de Rham realisation is given by
$$
H^1(\mathcal{Y}_0(N),V_k)_{\dR} = H^1_{\dR}(\mathcal{Y}_0(N), \mathcal{V}_k)\text{,}
$$
where $\mathcal{V}_k$ is the vector bundle $\Sym^k H_{\dR}^1(\mathcal{E}/\mathcal{Y}_0(N))$ endowed with the Gauss--Manin connection. This cohomology group admits a description in terms of weakly holomorphic modular forms (cf. \cite{scholl85}, \cite{coleman96}, \cite{KS16}, \cite{candelori14}, \cite{BH18}): there is a canonical exact sequence of $\mathbb{Q}$-vector spaces
  \begin{equation}\label{eq:intro-ex-seq}
0 \to M_{-k}^!(\Gamma_0(N); \mathbb{Q}) \stackrel{D^{k+1}}{\to} M_{k+2}^!(\Gamma_0(N);\mathbb{Q}) \to H^1_{\dR}(\mathcal{Y}_0(N), \mathcal{V}_k) \to 0\text{,}
  \end{equation}
  where $M^!_{r}(\Gamma_0(N);\mathbb{Q})$ denotes the space of weakly holomorphic modular forms of weight $r$ and level $\Gamma_0(N)$ with rational Fourier coefficients at infinity, and $D^{k+1}$ is the `Bol operator'. The Betti realisation of $H^1(\mathcal{Y}_0(N),V_k)$ is isomorphic to the classical group cohomology of $\Gamma_0(N)$ considered by Eichler and Shimura (see Sect. \ref{sec:real-modular-motives} below).

 Motives of cusp forms are found in the weight $k+1$ part of $H^1(\mathcal{Y}_0(N),V_k)$, which is a polarisable pure motive $H_{\cusp}^1(\mathcal{Y}_0(N),V_k)$ of Hodge type $\{(k+1,0),(0,k+1)\}$ whose realisations are given by cuspidal (or parabolic) cohomology. The single-valued periods of $H_{\cusp}^1(\mathcal{Y}_0(N),V_k)$ are the main object of study of this paper. These include, up to a power of $2\pi i$, the Petersson norms of modular forms of weight $k+2$ and level $\Gamma_0(N)$, but also some non-standard quantities, as we shall see below. 

\subsection{Single-valued periods and coefficients of Poincaré series}

Our main result is the following.

\begin{theorem}[cf. Theorem \ref{thm:main} below]\label{thm:intro-main}
  Let $k\ge 0$ and $N\ge 1$ be integers, with $k$ even, and let $S = (s_{ij})_{1\le i,j\le r}$ be a  single-valued period matrix with respect to a $\mathbb{Q}$-basis of  $H_{\cusp}^1(\mathcal{Y}_0(N),V_k)_{\dR}$. Then,
  $$
\mathbb{Q}(s_{ij} \text{ ; } 1\le i,j\le r) = \mathbb{Q}(a_n(P_{m,k+2,N}) \text{ ;  for all } n\ge 1\text{ and } m\neq 0)\text{.}
  $$
\end{theorem}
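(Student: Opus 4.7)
The plan is to establish both inclusions by using Bringmann--Ono's harmonic Maass lifts of Poincaré series as an explicit bridge between the Fourier coefficients of the $P_{m,k+2,N}$ and the entries of a single-valued period matrix of $H^1_{\cusp}(\mathcal{Y}_0(N),V_k)_{\dR}$. Via the exact sequence (\ref{eq:intro-ex-seq}), classes in $H^1_{\dR}(\mathcal{Y}_0(N),\mathcal{V}_k)$ are represented by weakly holomorphic forms of weight $k+2$ modulo the image of the Bol operator, and the family $\{P_{m,k+2,N}\}_{m\ne 0}$ spans this space: the positive-index members land in the cusp forms $S_{k+2}(\Gamma_0(N))$, while the negative-index ones provide, modulo Eisenstein contributions with rational Fourier coefficients, complementary representatives in $H^1_{\cusp,\dR}$. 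The first step is therefore to exhibit a $\mathbb{Q}$-basis of $H^1_{\cusp,\dR}$ whose elements are represented by explicit $\mathbb{Q}$-linear combinations of Poincaré series.

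For the inclusion $\mathbb{Q}(s_{ij}) \subseteq \mathbb{Q}(a_n(P_{m,k+2,N}))$, I would attach to each basis element a harmonic Maass lift $\widehat{P}$. The canonical holomorphic/non-holomorphic decomposition of $\widehat{P}$ features a mock modular holomorphic part with Fourier coefficients built from the $a_n(P_{\pm m,k+2,N})$, and a non-holomorphic part governed by the companion Poincaré series through the $\xi_{2-k}$-operator. Extending the classical Eichler cocycle construction to harmonic Maass forms produces Betti representatives of the corresponding de Rham classes whose coordinates lie in the Fourier coefficient field of the $P_{m,k+2,N}$; assembling these yields a period matrix $P$ whose entries, and therefore those of the single-valued matrix $S = P^{-1}\overline{P}$, lie in that field.

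For the reverse inclusion, Petersson's reproducing formula expresses $a_n(f)$ for any cusp form $f$ as a Petersson inner product $\langle f, P_{n,k+2,N}\rangle_{\Pet}$ divided by a Petersson norm. Petersson norms (up to powers of $2\pi i$) are classical single-valued periods of $H^1_{\cusp}(\mathcal{Y}_0(N),V_k)$, and the Brown--Hain quasi-period pairings supply the remaining off-diagonal information. Consequently, the coefficients $a_n(P_{m,k+2,N})$ can be recovered as $\mathbb{Q}$-linear combinations of the entries of $S$ after again peeling off the Eisenstein contribution from each negative-index Poincaré series (which contributes only rational divisor sums).

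The main obstacle I anticipate is precisely matching the single-valued involution $\sv$, which is defined by intrinsic topological data on Betti cohomology via $F_\infty$, with the differential-geometric structure on harmonic Maass forms coming from the $\xi$-operator. The core technical task will be to show that the Eichler cocycle of a harmonic lift is $F_\infty$-conjugate --- up to an explicit twist by powers of $2\pi i$ --- to the holomorphic cocycle of its shadow. I expect to handle this by writing both cocycles explicitly in a Brown--Hain style model and comparing them term by term, using the Bessel-function expansions (\ref{eq:intro-Fourier-positive}) and (\ref{eq:intro-Fourier-negative}) as a bookkeeping device for the resulting Fourier-coefficient identities.
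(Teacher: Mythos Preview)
Your proposal contains a genuine structural gap. You plan to build a $\mathbb{Q}$-basis of $H^1_{\cusp}(\mathcal{Y}_0(N),V_k)_{\dR}$ out of $\mathbb{Q}$-linear combinations of Poincar\'e series, but this cannot work: the $P_{m,k+2,N}$ have \emph{real}, not rational, Fourier coefficients, so they only define classes in $H^1_{\dR,\cusp}\otimes_{\mathbb{Q}}\mathbb{R}$. The paper proceeds in the opposite direction: it fixes a genuinely rational symplectic basis $([f_1],\ldots,[f_d],[g_1],\ldots,[g_d])$ with $f_i\in S_{k+2}(\Gamma_0(N);\mathbb{Q})$ and $g_i\in S^{!,\infty}_{k+2}(\Gamma_0(N);\mathbb{Q})$, and then expresses each $P_{\pm m}$ in this basis. (Incidentally, there is no Eisenstein contribution to ``peel off'': the $P_{-m,k+2,N}$ are already weakly holomorphic cusp forms in $S^{!,\infty}_{k+2}$.)

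More seriously, your plan for the inclusion $\mathbb{Q}(s_{ij})\subseteq\mathbb{Q}(a_n(P_m))$ goes through Betti cohomology: you want to write down Eichler cocycles for harmonic lifts, assemble a period matrix $P$, and then form $S=P^{-1}\overline{P}$. But the entries of $P$ are period integrals such as $(2\pi i)^{k+1}\int_{\gamma^{-1}\tau_0}^{\tau_0} f(\tau)(X-\tau Y)^k\,d\tau$, and these have no reason to lie in the field generated by Fourier coefficients of Poincar\'e series. You are conflating the Fourier-coefficient field (a subfield of $\mathbb{R}$) with the period field (which contains, e.g., $\omega^{\pm}$ and $\eta^{\pm}$). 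The whole point of single-valued periods is that these transcendental periods cancel in the combination $P^{-1}\overline{P}$, but your argument does not explain \emph{why} that cancellation lands in the Poincar\'e-coefficient field.

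The paper avoids Betti cohomology entirely. The key step (Proposition~\ref{prop:poincare-betti-conjugate}) is the clean identity
\[
\sv([P_{m,k+2,N}]) = -[P_{-m,k+2,N}]\quad\text{in } H^1_{\dR,\cusp}\otimes_{\mathbb{Q}}\mathbb{R},
\]
obtained by combining Bringmann--Ono's Maass--Poincar\'e series with the commutative diagram of Corollary~\ref{coro:maass-forms}, which computes $\sv\otimes c_{\dR}$ purely on the de Rham side via $\xi_{-k}$ and $D^{k+1}$. Once this is known, both inclusions are linear algebra: writing $P_{m_j}=\sum_i\Lambda_{ij}f_i$ and using Petersson's formula gives $\Lambda=-C^{-1}Q$ with $Q\in\GL_d(\mathbb{Q})$, so $a_n(P_{m_j})\in\mathbb{Q}(C)$; applying $\sv$ to this relation and using the identity above gives $P_{-m_j}=-\sum_i\Lambda_{ij}f_i^{\flat}+D^{k+1}h_j$ with $f_i^{\flat}=\sum_l(A_{li}f_l+C_{li}g_l)$, and Lemma~\ref{lemma:principal-part} forces $h_j$ to be rational, so $a_n(P_{-m_j})\in\mathbb{Q}(A,C)$. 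The reverse inclusion follows by inverting these formulas, together with the algebraic relation $\mathbb{Q}(S)=\mathbb{Q}(A,C)$ from Proposition~\ref{prop:svp-matrix-polarisation}. Your ``main obstacle'' is precisely this diagram, but the paper proves it by a short $C^{\infty}$ de Rham computation (Theorem~\ref{thm:maass-forms}) rather than by a term-by-term Bessel-function comparison.
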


As an immediate corollary of the above theorem, we deduce that the field generated by Fourier coefficients of Poincaré series of positive and negative index $\mathbb{Q}(a_n(P_{m,k+2,N}) \text{ ; } n\ge 1\text{, } m\neq 0)$ is finitely generated over $\mathbb{Q}$. In the degenerate case where $S_{k+2}(\Gamma_0(N))=0$ (or, equivalently, $H^1_{\cusp}(\mathcal{Y}_0(N),V_k) = 0$), there are no single-valued periods; in other words, $S$ should be taken as the empty matrix, and the statement amounts to asserting that the Fourier coefficients of the corresponding Poincaré series are all rational. This is the case of $P_{-1,2,1}$ considered above.

Apart from terminology, that Fourier coefficients of Poincaré series of \emph{positive} index $m$ can be written as rational expressions in single-valued periods of $H_{\cusp}^1(\mathcal{Y}_0(N),V_k)$ is in fact a classical statement following from Petersson's formula: for every $f \in S_{k+2}(\Gamma_0(N))$,
\begin{equation}\label{eq:intro-petersson-formula}
(f,P_{m,k+2,N})_{\Pet} \defeq \int_{\Gamma_0(N)\backslash\mathbb{H}}f(x+iy)\overline{P_{m,k+2,N}(x+iy)}y^{k+2}\frac{dx\wedge dy}{y^2} = \frac{k!}{(4\pi m)^{k+1}}a_m(f)\text{.}
\end{equation}
The main novelty in Theorem \ref{thm:intro-main} is the case of negative index.

\begin{ex}
  The elliptic curve $E$ of Example \ref{ex:intro-X_0(11)} is isomorphic to the modular curve $X_0(11)$. In particular, since $H^1(X_0(11)) = H^1_{\cusp}(\mathcal{Y}_0(11),V_0)$, Theorem \ref{thm:intro-main} implies that the Fourier coefficients of $P_{m,2,11}$ are rational expressions in the single-valued periods of $H^1(E)$ for every $m \in \mathbb{Z}\setminus\{0\}$. For instance, we have
$$
a_1(P_{1,2,11}) =-\frac{2\pi i}{\omega_1\overline{\omega}_2 - \overline{\omega}_1\omega_2} = 1.696742...
$$
and
$$
a_{1}(P_{-1,2,11}) = \frac{\overline{\omega}_1\eta_2-\overline{\omega}_2\eta_1}{\omega_1\overline{\omega}_2 - \overline{\omega}_1\omega_2} - 1 = -0.952086...
$$
This can be checked numerically using formulas (\ref{eq:intro-Fourier-positive}) and (\ref{eq:intro-Fourier-negative}). The expression for $a_1(P_{1,2,11})$ is classical: it follows from (\ref{eq:intro-petersson-formula}), which shows that $a_1(P_{1,2,11})^{-1} = 4\pi (f,f)_{\Pet}$, with $f$ the unique normalised cusp form in $S_2(\Gamma_0(11))$.
\end{ex}

In general, when $\dim S_{k+2}(\Gamma_0(N))=1$ (see tables in Sect. \ref{sec:rank2} below), that is, when the motive $H^1_{\cusp}(\mathcal{Y}_0(N),V_k)$ is of rank 2, we prove a refined version of Theorem \ref{thm:intro-main}. In this case, let us denote by 
$$
\left(\begin{array}{cc}
            s_{11} & s_{12} \\
            s_{21} & s_{22}
          \end{array}\right)
$$
the single-valued period matrix with respect to a symplectic basis $([f],[g])$ of $H^1_{\cusp}(\mathcal{Y}_0(N),V_k)_{\dR}$, induced by a cusp form $f$ and a weakly holomorphic modular form $g$ (see (\ref{eq:intro-ex-seq})).

\begin{theorem}[cf. Proposition \ref{prop:sv-periods-rank-2} below]\label{thm:intro-rank2}
  With the above notation, for every integer $m\ge 1$, there exists $h_m \in M_{-k}^!(\Gamma_0(N);\mathbb{Q})$, depending on $f$ and $g$, such that, for every $n\ge 1$,
   $$
a_n(P_{m,k+2,N}) = -\frac{k!}{m^{k+1}}a_m(f)a_n(f)\frac{1}{s_{21}}\ \ \text{ and }\ \ a_n(P_{-m,k+2,N}) =  \frac{k!}{m^{k+1}}a_m(f)a_n(f)\frac{s_{11}}{s_{21}} + r_{m,n} \text{,}
$$
where
$$
r_{m,n} = \frac{k!}{m^{k+1}}a_m(f)a_n(g) + n^{k+1}a_n(h_m) \in \mathbb{Q}\text{.}
$$
\end{theorem}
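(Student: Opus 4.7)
The proof naturally splits by the sign of $m$. For positive $m\ge 1$, the hypothesis $\dim S_{k+2}(\Gamma_0(N))=1$ forces $P_{m,k+2,N}=c_m f$ for some scalar $c_m$, since $P_{m,k+2,N}$ is a cusp form. Petersson's formula (\ref{eq:intro-petersson-formula}) together with the sesquilinearity of $(\cdot,\cdot)_{\Pet}$ yields, under a normalisation where $a_m(f)\in\mathbb{R}$, the value
\[
c_m = \frac{k!\,a_m(f)}{(4\pi m)^{k+1}(f,f)_{\Pet}}.
\]
Matching with the claimed formula reduces the positive-index case to the single identity $s_{21}=-(4\pi)^{k+1}(f,f)_{\Pet}$. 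I would prove this by a direct matrix computation: expanding $S=P^{-1}\overline{P}$ with $P=(p_{ij})$ the period matrix of the symplectic basis $([f],[g])$, one finds $s_{21}=(p_{11}\overline{p_{21}}-\overline{p_{11}}p_{21})/\det P$, and then uses the Legendre-type relation $\det P\in\mathbb{Q}^\times(2\pi i)^{k+1}$ (from the symplectic normalisation) together with the classical Haberland--Shimura identity expressing $p_{11}\overline{p_{21}}-\overline{p_{11}}p_{21}$ as a rational multiple of $(f,f)_{\Pet}$.

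For negative $m$, the strategy is to produce an explicit decomposition
\[
P_{-m,k+2,N} = A_m f + B_m g + D^{k+1}h_m
\]
in $M^!_{k+2}$, with $A_m,B_m\in\mathbb{C}$ and $h_m\in M^!_{-k}(\Gamma_0(N);\mathbb{Q})$. Reading off the $n$-th Fourier coefficient then gives $a_n(P_{-m,k+2,N})=A_m a_n(f)+B_m a_n(g)+n^{k+1}a_n(h_m)$, which matches the shape of $r_{m,n}$. The identification $B_m=\frac{k!}{m^{k+1}}a_m(f)$ comes from a regularised Petersson-type pairing argument, essentially a residue computation at the pole $q^{-m}$ at infinity, producing a purely rational expression. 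The identification $A_m=\frac{k!}{m^{k+1}}a_m(f)\cdot s_{11}/s_{21}$ is the heart of the matter, and is where the Bringmann--Ono harmonic lift enters: one constructs a harmonic Maass form $\mathcal{P}_m$ of weight $2-k$ with principal part $q^{-m}$ and shadow proportional to $f$, so that $D^{k+1}\mathcal{P}_m^+$ recovers $P_{-m,k+2,N}$ up to a constant. Pairing $\mathcal{P}_m$ against the class $[f]$ via the Betti--de Rham comparison isolates the non-holomorphic part $\mathcal{P}_m^-$, which is built from $f$ and its complex conjugate, so that the resulting coefficient is manifestly a ratio of entries of $S$.

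The main obstacle is the identification of $A_m$: one must show that the analysis of $\mathcal{P}_m^-$, which involves both the Hodge filtration on $H^1_{\cusp}(\mathcal{Y}_0(N),V_k)$ and the real Frobenius, produces exactly the ratio $s_{11}/s_{21}$ rather than some alternative combination like $s_{12}/s_{22}$. This ultimately requires interpreting $s_{11}$ as a quasi-period of $f$ in the sense of Brown--Hain \cite{BH18} and carefully tracking how the harmonic lift unfolds into that specific entry of the single-valued period matrix. Once $A_m$ and $B_m$ are pinned down, rationality of $h_m$ follows automatically: the difference $P_{-m,k+2,N}-A_m f-B_m g$ must lie in the image of $D^{k+1}$ with rational Fourier expansion, and the Bol operator admits a rational right inverse on its image inside $M^!_{k+2}(\Gamma_0(N);\mathbb{Q})$ thanks to (\ref{eq:intro-ex-seq}).
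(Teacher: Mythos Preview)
Your positive-index argument is essentially correct, but your route to $s_{21}=-(4\pi)^{k+1}(f,f)_{\Pet}$ via explicit period-matrix entries and a Haberland identity is more laborious than necessary. The paper obtains this in one line from the Hermitian form $(\omega,\eta)_{\dR}:=-\langle\omega,\sv(\eta)\rangle_{\dR}$: since $\sv([f])=s_{11}[f]+s_{21}[g]$ and $\langle[f],[f]\rangle_{\dR}=0$, $\langle[f],[g]\rangle_{\dR}=1$, one gets $([f],[f])_{\dR}=-s_{21}$ directly, and Proposition~\ref{prop:derham-petersson} identifies $([f],[f])_{\dR}$ with $(4\pi)^{k+1}(f,f)_{\Pet}$.

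For the negative-index case your plan has two genuine gaps. First, you propose to determine $A_m$ and $B_m$ by separate mechanisms (a residue pairing for $B_m$, an analysis of the non-holomorphic part of the Maass--Poincar\'e series for $A_m$), and you yourself flag the identification of $A_m$ as the ``main obstacle'' without resolving it. The paper avoids this entirely: the Bringmann--Ono lift is packaged into the single relation $\sv([P_m])=-[P_{-m}]$ (Proposition~\ref{prop:poincare-betti-conjugate}, via Corollary~\ref{coro:maass-forms}). Since you already have $P_m=\lambda f$ from the positive-index step, applying $\sv$ gives $[P_{-m}]=-\lambda\,\sv([f])=-\lambda(s_{11}[f]+s_{21}[g])$, so $A_m=-\lambda s_{11}$ and $B_m=-\lambda s_{21}$ drop out simultaneously, with no further harmonic analysis needed.

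Second, your rationality argument for $h_m$ is incorrect as stated: $A_m$ involves $s_{11}/s_{21}$ and is not rational, so $P_{-m}-A_m f-B_m g$ does \emph{not} have rational Fourier expansion, and you cannot invoke a rational right inverse to $D^{k+1}$. The paper's fix is to look only at principal parts at $\infty$: since $f$ is a cusp form it contributes nothing there, and $-\lambda s_{21}=\frac{k!}{m^{k+1}}a_m(f)\in\mathbb{Q}$, so $\mathcal{P}_{D^{k+1}h_m}=q^{-m}-\frac{k!}{m^{k+1}}a_m(f)\mathcal{P}_g\in\mathbb{Q}[q^{-1}]$. Then Lemma~\ref{lemma:principal-part} (which uses crucially that $h_m$ has weight $-k\le 0$, so its principal part determines it) forces $h_m\in M^{!,\infty}_{-k}(\Gamma_0(N);\mathbb{Q})$.
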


From the relations $S^2 = \id$ and $\text{Tr }S = 0$, we conclude that the entries $s_{12}$ and $s_{22}$ of $S$ are determined by $s_{11}$ and $s_{21}$, so that the above result indeed refines Theorem \ref{thm:intro-main} in the rank 2 case.

The above theorem also explains some rationality phenomena. For instance, when $a_m(f)=0$, then the above formulas imply that $P_{m,k+2,N}$ vanishes identically and that $a_n(P_{-m,k+2,N})$ is a rational number for all $n\ge 1$. When the unique normalised cusp form in $S_{k+2}(\Gamma_0(N))$ has CM, the motive $H^1_{\cusp}(\mathcal{Y}_0(N),V_k)$ acquires extra endomorphisms after base change to a CM field (\cite{schappacher88} Chapter V), and we can deduce from this that the quotient of single-valued periods $s_{11}/s_{21}$ is a rational number (cf. Example \ref{ex:endomorphism} below); thus, $P_{-m,k+2,N}$ has rational Fourier coefficients. 

Theorem \ref{thm:intro-rank2} generalises Brown's Corollary 1.4 in \cite{brown18}, concerning level 1 and weight 12.  It also provides a proof for the formulas in `genus 1' of Acres and Broadhurst \cite{AB19} found by numerical experimentation. To see this, one must remark that the `Rademacher sums' of \cite{AB19} are coefficients of Poincaré series after the action of the Fricke involution (cf. \cite{lefourn17} Proposition 5.4). In general, Theorem \ref{thm:intro-main} gives a conceptual explanation for the identities involving periods and quasi-periods of modular forms found by Broadhurst and Acres--Broadhurst (see also the formulas (\ref{eq:formula-rho}) below).

\begin{obs}
  In higher rank, after base change to an appropriate number field $K$, we can split $H^1_{\cusp}(\mathcal{Y}_0(N),V_k)$ by using Hecke operators; this leads to simple formulas (with coefficients in $K$) akin to the rank 2 case. This is worked out in Sect. \ref{sec:hecke} in the case where there are no old forms, such as in level 1.
\end{obs}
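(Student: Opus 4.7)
The plan is to let $K$ denote the totally real number field generated by the Hecke eigenvalues of all normalised newforms in $S_{k+2}(\Gamma_0(N))$, and to use the commuting action of the Hecke algebra on $H^1_{\cusp}(\mathcal{Y}_0(N),V_k)$ to decompose this motive, after base change to $K$, into a direct sum of rank-2 sub-motives $H_f$, one for each such newform $f$. In the no-oldform situation (automatic for $N=1$, or more generally whenever every cusp form in $S_{k+2}(\Gamma_0(N))$ is a newform), strong multiplicity one forces each Hecke isotypic component to be exactly two-dimensional.

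The first step is to lift the Hecke action to a motivic action on $H^1(\mathcal{Y}_0(N),V_k)$ over $\mathbb{Q}$ that respects the exact sequence (\ref{eq:intro-ex-seq}) — this reduces to checking that the classical Hecke operators $T_n$ act on $M^!_{k+2}(\Gamma_0(N);\mathbb{Q})$ and on $M^!_{-k}(\Gamma_0(N);\mathbb{Q})$ compatibly with the Bol operator $D^{k+1}$, a fact that follows from the standard formula $D^{k+1}\circ T_n = n^{k+1}\, T_n \circ D^{k+1}$ together with rationality of Hecke operators on $q$-expansions. Base changing to $K$, the semisimple $K$-algebra generated by the $T_n$ splits $H^1_{\cusp}(\mathcal{Y}_0(N),V_k)_K$ as $\bigoplus_f H_f$, with $\dim_K H_{f,\dR}=2$ and each $H_f$ an irreducible pure sub-motive over $K$ in the sense of having only scalar endomorphisms.

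Next, for each such $f$, I would fix a Hecke-equivariant complement $g_f\in M^!_{k+2}(\Gamma_0(N);K)$ so that $([f],[g_f])$ is a symplectic $K$-basis of $H_{f,\dR}$; existence of such $g_f$ comes from the fact that the $f$-isotypic projector lies in the Hecke algebra with coefficients in $K$, together with the surjection in (\ref{eq:intro-ex-seq}). Writing $S^{(f)} = (s^{(f)}_{ij})$ for the single-valued period matrix of $H_f$ in this basis, the argument proving Theorem \ref{thm:intro-rank2} applies componentwise once the Poincaré series is projected: Petersson's formula (\ref{eq:intro-petersson-formula}) shows that the $f$-component of $P_{m,k+2,N}$ is a $K$-multiple of $a_m(f)$, and the corresponding identity for the Bringmann--Ono harmonic lift controls the $f$-component of $P_{-m,k+2,N}$. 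Summing over $f$ yields, for every $m,n\ge 1$,
\begin{equation*}
a_n(P_{m,k+2,N}) = -\frac{k!}{m^{k+1}}\sum_{f} \frac{a_m(f)\,a_n(f)}{s^{(f)}_{21}},\qquad a_n(P_{-m,k+2,N}) = \frac{k!}{m^{k+1}}\sum_{f} a_m(f)\,a_n(f)\,\frac{s^{(f)}_{11}}{s^{(f)}_{21}} + r_{m,n},
\end{equation*}
with $r_{m,n}\in K$ an explicit remainder term built from the $g_f$ and from weakly holomorphic elements $h_{m,f}\in M^!_{-k}(\Gamma_0(N);K)$ as in Theorem \ref{thm:intro-rank2}.

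The main obstacle will be arranging the Hecke splitting at the level of $q$-expansions, i.e. producing the complementary weakly holomorphic modular forms $g_f$ with Fourier coefficients in $K$ and lying in the $f$-isotypic component of $M^!_{k+2}(\Gamma_0(N);K)/D^{k+1}M^!_{-k}(\Gamma_0(N);K)$. Equivalently, one must show that the Hecke-isotypic decomposition of $H^1_{\dR}(\mathcal{Y}_0(N),\mathcal{V}_k)_K$ is realised by classes admitting $K$-rational $q$-expansion representatives; this is where the no-oldform hypothesis is used, since in that range the Atkin--Lehner newform theory and the $q$-expansion principle together guarantee the existence of $K$-rational Hecke projectors and $K$-rational weakly holomorphic lifts.
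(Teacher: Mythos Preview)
Your proposal is correct and follows essentially the same route as the paper: Section~\ref{sec:hecke} defines the Hecke operators $T_p$ as endomorphisms of $H^1(\mathcal{Y}_0(N),V_k)$ in $\mathcal{H}(\mathbb{Q})$, passes to the compositum $K$ of the fields $K_{f_i}$, splits $H^1_{\cusp}(\mathcal{Y}_0(N),V_k)\otimes_{\mathbb{Q}}K$ as an orthogonal sum of rank-2 pieces $H_{f_i}$ under the no-oldform hypothesis, chooses for each $f_i$ a weakly holomorphic $g_i\in S^{!,\infty}_{k+2}(\Gamma_0(N);K_{f_i})$ with $\langle[f_i],[g_i]\rangle_{\dR}=1$ that is Hecke-eigen modulo $D^{k+1}$, and then runs the rank-2 argument componentwise to obtain exactly the formulas you wrote (with your $s^{(f)}_{21}=c_i$ and $s^{(f)}_{11}/s^{(f)}_{21}=\rho_i$). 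The only cosmetic difference is that the paper works in the refined space $S^{!,\infty}_{k+2}$ rather than $M^!_{k+2}$, and it states the result at the level of the modular forms $P_{\pm m}$ themselves (Theorem~\ref{thm:Pm-hecke-basis}) rather than of their individual Fourier coefficients.
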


Our proof method relies on the theory of harmonic Maass forms and in its relation with the single-valued involution as explained by Brown \cite{brown18}. We summarise this relation in the following theorem, which reformulates and generalises Brown's results (cf. Candelori \cite{candelori14}).

\begin{theorem}[cf. Corollary \ref{coro:maass-forms} below]\label{thm:intro-maass}
The following diagram of $\mathbb{C}$-vector spaces commutes:
  $$
  \begin{tikzcd}[column sep=small]
    & H_{-k}^!(\Gamma_0(N))\arrow{ld}[swap]{\frac{1}{(4\pi)^{k+1}}\xi_{-k}}\arrow{rd}{\frac{1}{k!}D^{k+1}} &\\
    M_{k+2}^!(\Gamma_0(N)) \arrow{d} &  & M_{k+2}^!(\Gamma_0(N)) \arrow{d}\\
    H^1_{\dR}(\mathcal{Y}_0(N),\mathcal{V}_k)\tensor_{\mathbb{Q}} \mathbb{C} \arrow{rr}[swap]{\sv\tensor c_{\dR}} & & H^1_{\dR}(\mathcal{Y}_0(N),\mathcal{V}_k)\tensor_{\mathbb{Q}} \mathbb{C}
  \end{tikzcd}
  $$
  where the vertical maps are induced by (\ref{eq:intro-ex-seq}), and $c_{\dR}$ denotes complex conjugation on coefficients.
\end{theorem}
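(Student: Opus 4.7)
The plan is to verify commutativity by representing both composites as explicit $\mathcal{V}_k\otimes\mathbb{C}$-valued differential forms on the upper half-plane $\mathbb{H}$, and then using an integration-by-parts identity to show that the two resulting cohomology classes agree. Recall from (\ref{eq:intro-ex-seq}) that the vertical map $M^!_{k+2}(\Gamma_0(N))\to H^1_{\dR}(\mathcal{Y}_0(N),\mathcal{V}_k)$ sends a weakly holomorphic form $g$ of weight $k+2$ to the class of the closed $1$-form $g(\tau)(X-\tau Y)^k\,d\tau$, where $X,Y$ denote a basis of sections of the universal relative $H^1_{\dR}$. Under comparison with Betti cohomology, the real Frobenius $F_{\infty}$ is induced by the antiholomorphic involution $\tau\mapsto-\bar\tau$ on $\mathbb{H}$, which interchanges the Hodge filtration pieces; combined with $c_{\dR}$ on coefficients, $\sv\otimes c_{\dR}$ sends the class of $g(\tau)(X-\tau Y)^k\,d\tau$ to the class represented by the antiholomorphic form $\overline{g(\tau)}(X-\bar\tau Y)^k\,d\bar\tau$.

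The central computation is a Bol-type identity. For $\widetilde{f}=f^++f^-\in H_{-k}^!(\Gamma_0(N))$, one checks on $\mathbb{H}$ the equality
$$
d\bigl(\widetilde{f}(\tau)(X-\tau Y)^k\bigr) = \frac{1}{k!}D^{k+1}(f^+)(\tau)(X-\tau Y)^k\,d\tau - \frac{1}{(4\pi)^{k+1}}\overline{\xi_{-k}(\widetilde{f})(\tau)}(X-\bar\tau Y)^k\,d\bar\tau.
$$
The first summand arises from $\partial/\partial\tau$ and $k+1$ successive integrations by parts (Bol's identity, together with $\partial_{\bar\tau}f^+=0$); the second comes from applying $\partial/\partial\bar\tau$ to $\widetilde{f}$, which picks out $f^-$, followed by the defining formula $\xi_{-k}(\widetilde{f})=2iy^{-k}\overline{\partial_{\bar\tau}\widetilde{f}}$. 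The normalization $(4\pi)^{k+1}$ appears via the Bringmann--Ono expansion of $f^-$ in incomplete gamma functions. Since $\widetilde{f}(\tau)(X-\tau Y)^k$ is $\Gamma_0(N)$-invariant as a $\mathcal{V}_k$-valued function on $\mathbb{H}$, the left-hand side is exact on $\Gamma_0(N)\backslash\mathbb{H}$, so the two summands on the right represent the same class in $H^1_{\dR}(\mathcal{Y}_0(N),\mathcal{V}_k)\otimes\mathbb{C}$; by the first paragraph, this is exactly the commutativity of the diagram.

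The main obstacle is growth control at the cusps: although $f^+$ may be meromorphic and $f^-$ grows exponentially, one must verify that $\widetilde{f}(\tau)(X-\tau Y)^k$ provides a bona fide primitive in the sense compatible with the mixed de Rham description (\ref{eq:intro-ex-seq}), so that Stokes-type reasoning applies globally. This is where the Bringmann--Ono construction of harmonic lifts of Poincaré series is essential, as it ensures the existence of $\widetilde{f}$ with prescribed, controlled principal parts at all cusps. Combining this with the Eichler--Shimura formalism for $\Gamma_0(N)$ extends Brown's level-$1$ cuspidal computation to the mixed-cohomology, higher-level setting treated here.
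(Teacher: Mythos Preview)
Your overall strategy---show that the two one-forms are cohomologous by exhibiting an explicit $\mathcal{V}_k$-valued primitive on $\mathbb{H}$---is the same as the paper's. The problem is the primitive you propose: the displayed ``Bol-type identity'' is false. Computing directly with $X,Y$ flat, the $d\bar\tau$-part of $d\bigl(\widetilde f(\tau)(X-\tau Y)^k\bigr)$ is $(\partial_{\bar\tau}\widetilde f)(X-\tau Y)^k\,d\bar\tau$, which after inserting $\partial_{\bar\tau}\widetilde f = \tfrac{i}{2}y^k\,\overline{\xi_{-k}\widetilde f}$ still carries the factor $(X-\tau Y)^k$, not $(X-\bar\tau Y)^k$. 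No amount of integration by parts in $\tau$ will convert one into the other; the section $\widetilde f(\tau)(X-\tau Y)^k$ lives entirely in the $\omega^k$-line and cannot produce the antiholomorphic form $\overline{f}\,\bar\omega^k\otimes d\bar\tau$ upon applying $\nabla$.

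What the paper actually does is construct the \emph{full} $C^\infty$ section $A=\sum_{r+s=k}A_{r,s}\,\omega^r\bar\omega^s$ of $\mathcal{V}_k$ solving $\nabla A = \omega_g - \overline{\omega_f}$, and then identifies the harmonic Maass form as the single component $F=(-4\pi i\,\Im\tau)^k A_{0,k}$. The recursions coming from $\nabla_D A = g\,\omega^k$ and $\nabla_{\bar D}A=-\bar f\,\bar\omega^k$ are exactly what yield $D^{k+1}F=k!\,g$ and $\xi_{-k}F=(4\pi)^{k+1}f$; the other components $A_{r,s}$ with $r\ge 1$ are essential and correspond to the ``lower-order terms'' you try to sweep away by hand-waving about Bol. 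Your primitive keeps only one piece of $A$ and therefore cannot close up.

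One further remark: the Bringmann--Ono construction is not used here at all. It enters only later (Proposition~\ref{prop:poincare-betti-conjugate}) to supply harmonic lifts of the specific Poincar\'e series $P_{m,k+2,N}$; for the present statement one works with an arbitrary $F\in H^!_{-k}(\Gamma_0(N))$ and needs no special growth beyond the definition.
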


Here, $H_{-k}^!(\Gamma_0(N))$ denotes the space of harmonic Maass forms of manageable growth of weight $-k$ and level $\Gamma_0(N)$ (see Sect. \ref{sec:maass} below), and $\xi_{-k}$ and $D^{k+1}$ are the differential operators considered in \cite{BOR08}. Let us remark that Theorem \ref{thm:intro-maass} clarifies a number of constructions in the theory of weakly holomorphic modular forms and harmonic Maass forms. For instance, using the above diagram one can show that the `regularised Petersson inner product' of \cite{BDE17} is induced by the usual Hermitian pairing on $H^1_{\dR}(\mathcal{Y}_0(N),\mathcal{V}_k)\tensor_{\mathbb{Q}} \mathbb{C}$ given by the polarisation of its Hodge structure (see Remark \ref{obs:regularized} below); this immediately implies, among other properties, the compatibility of the regularised inner product with Hecke operators.  

By combining Theorem \ref{thm:intro-maass} with a theorem of Bringmann and Ono on the harmonic lifts of Poincaré series \cite{BO07}, we deduce the following result, which is the main ingredient in the proof of Theorems \ref{thm:intro-main} and \ref{thm:intro-rank2}.

\begin{prop}[cf. Proposition \ref{prop:poincare-betti-conjugate} below]
  For every integer $m\neq 0$, the image of $P_{m,k+2,N}$ in $H^1_{\cusp}(\mathcal{Y}_0(N),V_k)_{\dR}\tensor_{\mathbb{Q}} \mathbb{R}$ satisfies
  $$
\sv([P_{m,k+2,N}]) = - [P_{-m,k+2,N}]\text{.}
  $$
\end{prop}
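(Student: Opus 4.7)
The approach is to exhibit both $P_{m,k+2,N}$ and $P_{-m,k+2,N}$ as coming from a single harmonic Maass form via the two operators $D^{k+1}$ and $\xi_{-k}$, and then to translate this coincidence into a statement about $\sv$ by means of the commutative diagram of Theorem~\ref{thm:intro-maass}.

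\emph{Step 1: the harmonic lift.} For each integer $m>0$, I would use the construction of Bringmann--Ono \cite{BO07} to produce a harmonic Maass form $F_m\in H^!_{-k}(\Gamma_0(N))$ of manageable growth whose holomorphic part has a prescribed principal part $q^{-m}$ at infinity (and is regular at the other cusps). Comparing their explicit Fourier expansions with the classical formulas~\eqref{eq:intro-Fourier-positive} and~\eqref{eq:intro-Fourier-negative} should yield normalizations under which
$$\frac{1}{k!}\,D^{k+1}(F_m)=P_{-m,k+2,N}\qquad\text{and}\qquad\frac{1}{(4\pi)^{k+1}}\,\xi_{-k}(F_m)=-P_{m,k+2,N}.$$
The first identity is essentially the fact that $D^{k+1}$ preserves the holomorphic part and that a weakly holomorphic form in $M^!_{k+2}$ is determined modulo cusp forms by its principal part; the second, and in particular the appearance of the minus sign, is the delicate point that must be extracted from the Fourier expansion.

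\emph{Step 2: applying the commutative diagram.} Substituting the identities of Step~1 into the diagram of Theorem~\ref{thm:intro-maass} gives
$$(\sv\otimes c_{\dR})\bigl(-[P_{m,k+2,N}]\bigr)=[P_{-m,k+2,N}]$$
in $H^1_{\dR}(\mathcal{Y}_0(N),\mathcal{V}_k)\otimes_{\mathbb{Q}}\mathbb{C}$. At this stage I would observe that the Fourier coefficients of $P_{r,k+2,N}$ are real for every $r\in\mathbb{Z}\setminus\{0\}$: the Bessel functions $J_{k-1}$ and $I_{k-1}$ take real values on $\mathbb{R}_{>0}$, and the Kloosterman sum $K(a,b;c)$ is invariant under the substitution $x\mapsto -x$ in the defining sum, hence equal to its own complex conjugate. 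Therefore the classes $[P_{\pm m,k+2,N}]$ lie in the real subspace $H^1_{\dR}(\mathcal{Y}_0(N),\mathcal{V}_k)\otimes_{\mathbb{Q}}\mathbb{R}$, on which $c_{\dR}$ acts as the identity, and the diagram collapses to $\sv([P_{m,k+2,N}])=-[P_{-m,k+2,N}]$, i.e.\ the statement for $m>0$. The case $m<0$ then follows by applying $\sv$ again and using that it is an involution on the real subspace.

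\emph{Main obstacle.} The bulk of the work is concentrated in Step~1: the Bringmann--Ono harmonic lift is defined up to an explicit normalizing constant depending on $k$, $m$, and $N$, and this constant must be tracked carefully to verify that it combines with the factors $\tfrac{1}{k!}$ and $\tfrac{1}{(4\pi)^{k+1}}$ of Theorem~\ref{thm:intro-maass} to produce precisely the minus sign displayed above. Once this Fourier-coefficient bookkeeping is performed, Step~2 is essentially automatic. The weight $2$ case ($k=0$) requires replacing the Poincaré series and the harmonic lift by their Hecke-regularized versions, but the structural argument is unchanged.
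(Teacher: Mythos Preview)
Your proposal is correct and follows essentially the same route as the paper: invoke the Bringmann--Ono Maass--Poincar\'e series as a common harmonic lift of $P_{m,k+2,N}$ and $P_{-m,k+2,N}$, feed it through the commutative diagram (Theorem~\ref{thm:intro-maass}/Corollary~\ref{coro:maass-forms}), and use the reality of the Fourier coefficients to drop $c_{\dR}$. The only cosmetic discrepancy is the placement of the minus sign---in the paper's normalisation one has $\tfrac{1}{(4\pi)^{k+1}}\xi_{-k}F_m=P_{m,k+2,N}$ and $\tfrac{1}{k!}D^{k+1}F_m=-P_{-m,k+2,N}$ rather than your version---but either way the diagram yields the same conclusion, and you rightly flag this bookkeeping as the point requiring care.
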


\subsection{Organisation of the article}

In Sects. 1 and 2 we outline the basic formalism concerning single-valued periods. As in \cite{BD19}, we avoid explicit mention of a category of motives by working in a suitable category of realisations $\mathcal{H}$, which is sufficient for our purposes. We then focus on the case of pure polarisable objects of $\mathcal{H}$ of Hodge type $\{(n,0),(0,n)\}$, as `motives of modular forms' are of this type. Our main result here is Proposition \ref{prop:svp-matrix-polarisation}, which explicitly describes the algebraic relations the single-valued period matrix of such an object must satisfy.

Sections 3 and 4 concern the theory of weakly holomorphic modular forms; their main purpose is to set up notation. We first introduce several spaces of weakly holomorphic modular forms for the group $\Gamma_0(N)$, especially $S_{r}^{!,\infty}(\Gamma_0(N))$, the space of weakly holomorphic cusp forms which are holomorphic at every cusp different from $\infty$. Then, we recall their geometric interpretation in terms of the moduli stack $\mathcal{Y}_0(N)$, and the $q$-expansion principle. A key result is Lemma \ref{lemma:principal-part}, which will allow us to control the field of definition of certain Fourier coefficients.

We then proceed to an explicit description of the objects $H_{\cusp}^1(\mathcal{Y}_0(N),V_k)$ of $\mathcal{H}$ in Sects. 5 and 6. This is well known to experts, but we could not find a reference with all the properties, in the particular setting and degree of generality we need, spelled out. After describing the cuspidal de Rham cohomology group $H_{\dR,\cusp}^1(\mathcal{Y}_0(N),\mathcal{V}_k)$ (see Corollary \ref{coro:derahm-whmf}), we show how to express it as the de Rham realisation of an object in $\mathcal{H}$ (see Theorem \ref{thm:object-H}); in particular, it is endowed with a single-valued involution.

In Section 7, we `compute' the single-valued involution of $H_{\cusp}^1(\mathcal{Y}_0(N),V_k)$ in terms of harmonic lifts of modular forms in Theorem \ref{thm:maass-forms}. Using a theorem of Bringmann and Ono, this allows us to understand the action of the single-valued involution on the cohomology classes defined by Poincaré series (Proposition \ref{prop:poincare-betti-conjugate}).

Finally, in Sections 8, 9 and 10, we employ the previous results to prove our main theorems relating the single-valued periods of $H_{\cusp}^1(\mathcal{Y}_0(N),V_k)$ and the Fourier coefficients at infinity of the Poincaré series $P_{m,k+2,N}$; see Proposition \ref{prop:sv-periods-rank-2}, Theorem \ref{thm:main}, and Theorem \ref{thm:Pm-hecke-basis}.

\subsection{Terminology and notation}

Our notations for modular forms are standard. The Poincaré upper half-plane is denoted by $\mathbb{H} = \{\tau \in \mathbb{C} \mid \Im \tau >0\}$, and the left action of $\SL_2(\mathbb{R})$ on $\mathbb{H}\cup \mathbb{P}^1(\mathbb{R})$ by
$$
g\cdot \tau = \frac{a\tau +b}{c\tau+d}\text{, }\qquad \text{ where } g = \left(\begin{array}{cc}a & b \\ c & d\end{array}\right)\text{.}
$$
If $f: \mathbb{H} \to \mathbb{C}$ is a function, $g \in \SL_2(\mathbb{R})$, and $r$ is an integer, we set $j(g,\tau)=c\tau+d$, and we denote
$$
f|_{g,r}: \mathbb{H} \to \mathbb{C}\text{, }\qquad \tau \mapsto j(g,\tau)^{-r}f(\tau)\text{.}
$$

If $N\ge 1$ is an integer, the Hecke congruence subgroup of level $N$ is the subgroup of $\SL_2(\mathbb{Z})$ defined by
$$
\Gamma_0(N) = \left\{\left(\begin{array}{cc} a & b \\ c & d\end{array} \right) \in {\SL}_2(\mathbb{Z})\text{ ; } c \equiv 0 \mod N\right\}\text{.}
$$
The set of cusps of $\Gamma_0(N)$ is the quotient $\Gamma_0(N)\backslash\mathbb{P}^1(\mathbb{Q})$. The stabiliser of a cusp $p$ is denoted by $\Gamma_0(N)_p$. For $g = \left(\begin{smallmatrix}a & b \\ c & d\end{smallmatrix}\right) \in \SL_2(\mathbb{Z})$, the cusp determined by $g$ is the class of $(a:c)$ in $\Gamma_0(N)\backslash\mathbb{P}^1(\mathbb{Q})$.

We denote
$$
D = \frac{1}{2\pi i}\frac{d}{d\tau} = q\frac{d}{dq}\text{,}
$$
where $q= e^{2\pi i \tau}$.

\subsection{Acknowledgements}

This project has received funding from the European Research Council (ERC) under the European Union’s Horizon 2020 research and innovation programme (Grant Agreement No. 724638). I am greatly indebted to Francis Brown for his encouragement and his many suggestions. I also thank Jan Vonk for our discussions concerning harmonic Maass forms and Netan Dogra for his kind clarifications on the theory of modular forms.

\section{The basic formalism of single-valued periods}

Let $K$ be a subfield of $\mathbb{R}$, and $\mathcal{H}(K)$ the `category of realisations' considered in \cite{BD19} (see also \cite{deligne79}, \cite{deligne89}). Its objects are given by triples
$$
H = ((H_{\B}, W^{\B},F_{\infty}),(H_{\dR},W^{\dR},F_{\dR}),\comp)\text{,}
$$
where
\begin{itemize}
\item $H_{\B}$ is a finite-dimensional $\mathbb{Q}$-vector space with an increasing (weight) filtration $W^{\B}$ and an involution $F_{\infty}: H_{\B} \to H_{\B}$ (`real Frobenius'),
\item $H_{\dR}$ is a finite-dimensional $K$-vector space with an increasing (weight) filtration $W^{\dR}$ and a decreasing (Hodge) filtration $F_{\dR}$, and
 \item $\comp: H_{\dR}\tensor_{K} \mathbb{C} \stackrel{\sim}{\to} H_{\B}\tensor_{\mathbb{Q}}\mathbb{C}$ is a $\mathbb{C}$-linear isomorphism
\end{itemize}
such that
\begin{itemize}
\item $\comp(W^{\dR}\tensor_{K}\mathbb{C}) = W^{\B}\tensor_{\mathbb{Q}}\mathbb{C}$, 
\item $(H_B, W^{\B}, \comp(F_{\dR}))$ is a $\mathbb{Q}$-mixed Hodge structure (\cite{deligne71} 2.3.8), and
\item the diagram of $\mathbb{C}$-vector spaces
  \begin{equation}\label{diagram:comp-frobenius}
    \begin{tikzcd}
      H_{\dR}\tensor_{K}\mathbb{C} \arrow{r}{\comp} \arrow{d}[swap]{\id \tensor c_{\dR}} & H_{\B} \tensor_{\mathbb{Q}}\mathbb{C}\arrow{d}{F_{\infty}\tensor c_{\B}}\\
      H_{\dR}\tensor_{K}\mathbb{C} \arrow{r}[swap]{\comp} & H_{\B} \tensor_{\mathbb{Q}}\mathbb{C}
    \end{tikzcd}
  \end{equation}
  commutes, where $c_{\B}$ and $c_{\dR}$ denote the action of complex conjugation on coefficients.
\end{itemize}
A morphism $\varphi: H \to H'$ is a pair of maps $\varphi_{\B}: H_{\B} \to H'_{\B}$ ($\mathbb{Q}$-linear), $\varphi_{\dR}: H_{\dR} \to H'_{\dR}$ ($K$-linear), `preserving' all of the above structures; in particular, $\varphi_B$ is a morphism of $\mathbb{Q}$-mixed Hodge structures $(H_{\B},W^{\B},\comp(F_{\dR})) \to (H'_{\B},{W^{\B}}', \comp(F_{\dR}'))$.

\begin{ex}[Tate objects]\label{ex:tate-obj}
For every $n \in \mathbb{Z}$, we define an object $\mathbb{Q}(-n)$ of $\mathcal{H}(K)$ as follows:  $\mathbb{Q}(-n)_{\B} = \mathbb{Q}$, $W^{\B}_{2n} = \mathbb{Q}(-n)_{\B}$, $W^{\B}_{2n-1} = 0$, $F_{\infty}=(-1)^n\id$, $\mathbb{Q}(-n)_{\dR} =  K$, $W^{\dR}_{2n} = \mathbb{Q}(-n)_{\dR}$, $W^{\dR}_{2n-1} = 0$,  $F_{\dR}^n = \mathbb{Q}(-n)_{\dR}$, $F_{\dR}^{n+1}=0$, and $\comp: z\mapsto (2\pi i)^nz$.
\end{ex}

The commutativity of the diagram (\ref{diagram:comp-frobenius}) implies that the $\mathbb{C}$-linear involution \linebreak $\comp^{-1}\circ (F_{\infty}\tensor \id) \circ \comp$ of $H_{\dR}\tensor_{K}\mathbb{C}$ commutes with $\id \tensor c_{\dR}$, so that there exists a unique $\mathbb{R}$-linear involution, the \emph{single-valued involution}
$$
\sv: H_{\dR}\tensor_{K}\mathbb{R} \to H_{\dR}\tensor_{K}\mathbb{R}\text{,}
$$
whose $\mathbb{C}$-linear extension to $H_{\dR}\tensor_{K}\mathbb{C} = (H_{\dR}\tensor_{K}\mathbb{R})\tensor_{\mathbb{R}}\mathbb{C}$ is
\begin{align}\label{eq:sv}
\sv\tensor \id = \comp^{-1}\circ (F_{\infty}\tensor \id) \circ \comp\text{.}
\end{align}

Note that a morphism $\varphi: H \to H'$ in $\mathcal{H}(K)$ commutes with the single-valued involutions:
\begin{align}\label{eq:sv-commute}
\sv \circ (\varphi_{\dR}\tensor \id) = (\varphi_{\dR}\tensor \id)\circ \sv\text{,}
\end{align}
where $\varphi_{\dR}\tensor \id : H_{\dR}\tensor_{K}\mathbb{R} \to H_{\dR}'\tensor_{K}\mathbb{R}$ is the $\mathbb{R}$-linear extension of $\varphi_{\dR}$.

We can express the action of the Betti complex conjugation  $\id \tensor c_{\B}$  on the de Rham side in terms of the single-valued involution as follows.

\begin{lemma}\label{lemma:betti-conj}
  For any object $H$ of $\mathcal{H}(K)$, we have
  $$
\sv\tensor c_{\dR} = \comp^{-1}\circ (\id \tensor c_{\B}) \circ \comp
$$
on $H_{\dR}\tensor_{K}\mathbb{C}$.
\end{lemma}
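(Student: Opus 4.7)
The plan is to prove the identity by a direct chain of equalities, combining the definition of $\sv$ from equation (\ref{eq:sv}), the commutativity of diagram (\ref{diagram:comp-frobenius}), and the fact that $F_\infty$ is an involution. No deeper input is needed.

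First, I would decompose the left-hand side as $\sv\otimes c_{\dR} = (\sv\otimes \id)\circ (\id\otimes c_{\dR})$. By the defining relation (\ref{eq:sv}), the first factor is $\comp^{-1}\circ (F_\infty\otimes \id)\circ \comp$, so
$$
\sv\otimes c_{\dR} = \comp^{-1}\circ (F_\infty\otimes \id)\circ \comp \circ (\id\otimes c_{\dR}).
$$
Next, I would apply the commutativity of (\ref{diagram:comp-frobenius}), which says precisely $\comp\circ (\id\otimes c_{\dR}) = (F_\infty\otimes c_{\B})\circ \comp$. Substituting this in yields
$$
\sv\otimes c_{\dR} = \comp^{-1}\circ (F_\infty\otimes \id)\circ (F_\infty\otimes c_{\B})\circ \comp = \comp^{-1}\circ (F_\infty^2\otimes c_{\B})\circ \comp.
$$
Finally, using that $F_\infty$ is a $\mathbb{Q}$-linear involution on $H_{\B}$, we have $F_\infty^2 = \id$, giving the desired identity $\sv\otimes c_{\dR} = \comp^{-1}\circ (\id\otimes c_{\B})\circ \comp$.

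There is no real obstacle here; the lemma is essentially a bookkeeping statement that unpacks the definition of $\sv$ against the Frobenius compatibility axiom. The only point that requires any care is tracking which complex conjugation ($c_{\B}$ vs.\ $c_{\dR}$) acts on which tensor factor when inserting the diagram into the middle of the composition, and ensuring $F_\infty$ and $c_{\B}$ commute (which is automatic since $F_\infty$ is $\mathbb{Q}$-linear and $c_{\B}$ acts only on the $\mathbb{C}$-factor of $H_{\B}\otimes_{\mathbb{Q}}\mathbb{C}$).
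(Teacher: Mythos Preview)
Your proof is correct and follows exactly the same approach as the paper's own proof: decompose $\sv\otimes c_{\dR}$ as $(\sv\otimes\id)\circ(\id\otimes c_{\dR})$, substitute the definition (\ref{eq:sv}), apply the commutativity of (\ref{diagram:comp-frobenius}), and use that $F_\infty$ is an involution. There is nothing to add.
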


\begin{proof}
 We have:
  \begin{align*}   
    \text{sv} \tensor c_{\dR} & = (\text{sv} \tensor \text{id})\circ (\text{id} \tensor c_{\dR}) && \\
                              & = \text{comp}^{-1}\circ (F_{\infty}\tensor \text{id}) \circ \text{comp}\circ (\text{id} \tensor c_{\dR}) &&  \text{by definition of }\text{sv} \text{ (\ref{eq:sv})}\\
                              & = \comp^{-1} \circ (F_{\infty}\tensor \id) \circ (F_{\infty}\tensor c_{\B}) \circ \comp && \text{by the commutativity of (\ref{diagram:comp-frobenius})}\\
      &= \comp^{-1}\circ (\id \tensor c_{\B}) \circ \comp && \text{since }F_{\infty}\text{ is an involution} 
  \end{align*} 
\end{proof}

\begin{defi}
  A \emph{single-valued period} of an object $H$ of $\mathcal{H}(K)$ is any real number of the form
  $$
\langle \varphi, \sv(\omega)\rangle \in \mathbb{R}
$$
where $\omega \in H_{\dR}$, $\varphi \in H_{\dR}^{\vee}\defeq \Hom_K(H_{\dR},K)$, and $\langle \ , \ \rangle$ denotes the natural duality pairing.
\end{defi}

If $r$ denotes the dimension of the $K$-vector space $H_{\dR}$, and 
$$
b_{\dR}: K^{\oplus r} \stackrel{\sim}{\to} H_{\dR}
$$
is a $K$-basis of $H_{\dR}$, then there exists a unique $S \in \GL_r(\mathbb{R})$, the matrix of $\sv$ in the basis $b_{\dR}$, such that
$$
\sv \circ b_{\dR} = b_{\dR} \circ S\text{.}
$$
The coefficients of $S$ are single-valued periods of $H$ and generate the $K$-linear span of all single-valued periods of $H$. Since $\sv$ is an involution,  single-valued periods always satisfy the  relations
$$
S^2 = \id\qquad \text{ and }\qquad \text{Tr } S = \text{Tr } F_{\infty}\text{.}
$$

Single-valued periods may also satisfy other relations:

\begin{ex}\label{ex:endomorphism}
  An endomorphism $\varphi$ of $H$ in $\mathcal{H}(K)$ induces a $K$-linear relation between its single-valued periods. Keeping the above notation, let $M \in \GL_r(K)$ be the matrix of $\varphi_{\dR}$ in the basis $b_{\dR}$, so that $\varphi_{\dR}\circ b_{\dR} = b_{\dR}\circ M$. By (\ref{eq:sv-commute}), and by definition of $M$ and $S$, we get
  $$
  MS = SM\text{.}
  $$
  We remark that, if $K$ is a real number field and $L\subset \mathbb{C}$ is a finite extension of $K$, then we can also define a category of realisations $\mathcal{H}(L)$ (see \cite{BD19} 2.1.2), and we get similar linear relations over $L$, with $M \in \GL_r(L)$, if $\varphi$ is now an endomorphism of $H\tensor_K L$ in $\mathcal{H}(L)$.
\end{ex}

Let us recall how the notion of single-valued periods connects with the notion of periods (cf. \cite{BD19} 2.4). The \emph{periods} of an object $H$ of $\mathcal{H}(K)$ are complex numbers of the form
  $$
  \langle \sigma, \comp(\omega) \rangle
  $$
  where $\omega \in H_{\dR}$ and $\sigma \in H_B^{\vee} \defeq \Hom_{\mathbb{Q}}(H_{\B},\mathbb{Q})$. We can write single-valued periods in terms of periods as follows. If $b_{\B}: \mathbb{Q}^{\oplus r} \stackrel{\sim}{\to} H_{\B}$ is a $\mathbb{Q}$-basis of $H_{\B}$, then the \emph{period matrix} of $\comp$ with respect to $b_{\dR}$ and $b_{\B}$ is the unique matrix $P \in \GL_r(\mathbb{C})$ satisfying
  $$
\comp \circ b_{\dR} = b_{\B} \circ P\text{.}
$$
The coefficients of $P$ generate the $K$-linear span of all the periods of $H$. Using Lemma \ref{lemma:betti-conj} and the definitions of $S$ and $P$, we get
$$
S = \overline{P}^{-1}P = P^{-1}\overline{P}\text{.}
$$
Further, if we let $R \in \GL_{r}(\mathbb{Q})$ be such that $F_{\infty}\circ b_{\B} = b_{\B} \circ R$, then $\overline{P} = RP$, and we get
$$
S = P^{-1}RP\text{.}
$$

\begin{obs}
 Beware that a period of an object $H$ of $\mathcal{H}(K)$ as defined above is only a period in the sense of Kontsevich and Zagier \cite{KZ01} if $K$ is a number field and $H$ comes from a mixed motive over $K$, e.g., $H=H^i(X)$ for some $K$-algebraic variety $X$.
\end{obs}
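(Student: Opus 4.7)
The statement is a terminological caveat rather than a theorem, but one can justify it in both directions: showing necessity of the hypotheses by exhibiting counterexamples when they fail, and sufficiency by appealing to a theorem of Grothendieck on algebraic de Rham cohomology.

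First I would recall the precise Kontsevich--Zagier definition: a KZ period is a complex number whose real and imaginary parts are absolutely convergent integrals of rational functions with $\overline{\mathbb{Q}}$-coefficients over semi-algebraic domains in $\mathbb{R}^n$ cut out by polynomial inequalities with $\overline{\mathbb{Q}}$-coefficients. In particular, the set of KZ periods is countable and contained in the closure of $\overline{\mathbb{Q}}$ under a prescribed list of integral operations.

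Next I would establish the necessity of both hypotheses by constructing explicit objects of $\mathcal{H}(K)$ whose ``periods'' in the paper's sense are not KZ periods. For the first hypothesis, if $K$ is a subfield of $\mathbb{R}$ that is not contained in $\overline{\mathbb{Q}}$, one can take a one-dimensional object with $H_{\B} = \mathbb{Q}$, $H_{\dR} = K$, trivial weight and Hodge filtrations, and $\comp$ given by multiplication by $1$: then any nonzero element $\omega \in H_{\dR}$ contributes a ``period'' in $K$, so transcendental elements of $K$ are periods in the abstract sense but not necessarily KZ periods. For the second hypothesis, even with $K = \mathbb{Q}$ one can manufacture a one-dimensional object with $H_{\B} = \mathbb{Q}$, $H_{\dR} = \mathbb{Q}$, and $\comp$ given by multiplication by any prescribed complex number $\lambda$; its periods are then $\mathbb{Q}\lambda$. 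Choosing $\lambda$ outside the countable set of KZ periods gives the counterexample.

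For sufficiency, suppose $K \subset \overline{\mathbb{Q}}$ is a number field and $H = H^i(X)$ for a smooth $K$-variety $X$. Here the de Rham realisation is algebraic de Rham cohomology $H^i_{\dR}(X/K)$, the Betti realisation is $H^i(X(\mathbb{C}); \mathbb{Q})^{\vee}$, and Grothendieck's comparison theorem identifies a period with an integral $\int_\sigma \alpha$ of an algebraic differential form $\alpha$ defined over $K$ over a singular cycle $\sigma$ with $\mathbb{Q}$-coefficients. Using the \v{C}ech description of hypercohomology on an affine cover defined over $K$, and a semi-algebraic triangulation of $X(\mathbb{C})$ compatible with the cover, one expresses such integrals as finite $\mathbb{Q}$-linear combinations of integrals of rational differential forms with $\overline{\mathbb{Q}}$-coefficients over semi-algebraic simplices defined over $\overline{\mathbb{Q}}$, which are KZ periods by the definition (after the standard reduction of integrating a closed algebraic $i$-form to integrating a rational function). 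For a general mixed motive the statement follows because these are built out of cohomology of varieties by taking kernels, cokernels, and extensions.

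The main obstacle is less a mathematical one than a question of how much detail to include: the substantive step is the classical fact that geometric periods are KZ periods, which is well-known (see e.g.\ Huber--M\"uller-Stach) but requires a nontrivial argument involving resolution of singularities and compatible semi-algebraic triangulations. In the context of the present remark, which is essentially a cautionary footnote, I would simply point the reader to the relevant literature rather than reproduce this argument.
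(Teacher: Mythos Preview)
Your elaboration is correct in spirit and goes well beyond what the paper does: in the paper this is a bare cautionary remark with no justification attached, so there is no proof to compare against. Your two counterexamples and the appeal to Grothendieck's comparison theorem (plus the Huber--M\"uller-Stach argument that cohomological periods are KZ periods) constitute exactly the right content if one wanted to turn the remark into a proved statement.

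One small correction to your second counterexample: you cannot take $\comp$ to be multiplication by an \emph{arbitrary} complex $\lambda$, because the compatibility diagram with $F_\infty$ and complex conjugation forces $\lambda \overline z = \pm \overline{\lambda}\,\overline z$, i.e.\ $\lambda$ must be real (with $F_\infty=\id$) or purely imaginary (with $F_\infty=-\id$). This does not affect the argument, since the real non-KZ-periods already form an uncountable set, but the parenthetical ``any prescribed complex number'' should be weakened accordingly.
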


\section{Single-valued periods in the presence of a polarisation} \label{sec:sv-per-pol}

Let $H$ be an object of $\mathcal{H}(K)$ such that $(H_{\B},W^{\B},\comp(F_{\dR}))$ is a pure $\mathbb{Q}$-Hodge structure of weight $n \in \mathbb{Z}$. This means that $W^{\B}_n = H_{\B}$, $W^{\B}_{n-1}=0$, and
$$
H_{\B}\tensor_{\mathbb{Q}}\mathbb{C} = \bigoplus_{p+q=n} H^{p,q}
$$
where
$$
H^{p,q} := \comp(F_{\dR}^p) \cap (\id \tensor c_{\B})(\comp(F_{\dR}^q))\text{.}
$$
We say that $H$ is \emph{pure of weight $n$}; the \emph{Hodge type} of $H$ is the set $\{(p,q) \in \mathbb{Z}^2 \mid H^{p,q}\neq 0\}$.

\begin{defi}
  A \emph{polarisation} of a pure object $H$ of $\mathcal{H}(K)$ of weight $n$ is a morphism
  $$
\langle \ , \ \rangle : H\tensor H \to \mathbb{Q}(-n)
$$
in $\mathcal{H}(K)$ inducing a polarisation on the pure Hodge structure $(H_{\B},\comp(F_{\dR}))$ (\cite{deligne71} 2.1.15). This means that the $\mathbb{C}$-linear extension of $\langle \ , \ \rangle_{\B}$ to $H_{\B}\tensor_{\mathbb{Q}} \mathbb{C}$ is $(-1)^n$-symmetric and satisfies the classical `Hodge--Riemann relations':
\begin{enumerate}
\item $\langle H^{p,q},H^{p',q'}\rangle_{\B} = 0$ if $(p,q)\neq (q',p')$
  \item $i^{p-q}\langle \alpha,(\id \tensor c_{\B})(\alpha)\rangle_{\B} >0$ for every $\alpha \in H^{p,q}\setminus\{0\}$. 
\end{enumerate}
\end{defi}

 It follows from the commutativity of
  \begin{equation}\label{eq:comm-pol}
  \begin{tikzcd}
    (H_{\dR}\tensor_{K}H_{\dR})\tensor_{\mathbb{Q}}\mathbb{C} \arrow{rr}{\langle \ , \ \rangle_{\dR}\tensor \id } \arrow{d}[swap]{\comp}& & \mathbb{Q}(-n)_{\dR}\tensor_{\mathbb{Q}}\mathbb{C}\arrow{d}{\comp}\\
    (H_{\B}\tensor_{\mathbb{Q}}H_{\B})\tensor_{\mathbb{Q}}\mathbb{C} \arrow{rr}[swap]{\langle \ , \ \rangle_{\B}\tensor \id } & &\mathbb{Q}(-n)_{\B}\tensor_{\mathbb{Q}}\mathbb{C}
  \end{tikzcd}
  \end{equation}
  and from Example \ref{ex:tate-obj} that
  \begin{align}\label{eq:comp-polarisation}
  (2\pi i)^n\langle \omega, \eta \rangle_{\dR} = \langle \comp(\omega),\comp(\eta)\rangle_{\B}
  \end{align}
  for every $\omega,\eta \in H_{\dR}\tensor_{K}\mathbb{C}$.

\begin{obs}\label{obs:non-degeneracy}
The second Hodge--Riemann relation implies that the $\mathbb{Q}$-bilinear pairing $\langle \ , \ \rangle_{\B}$, and hence also the $K$-bilinear pairing $\langle \ , \ \rangle_{\dR}$ by (\ref{eq:comp-polarisation}), is non-degenerate.
\end{obs}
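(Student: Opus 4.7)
The plan is to establish non-degeneracy first on the Betti side over $\mathbb{C}$, descend to $\mathbb{Q}$ by flatness, and then transfer to the de Rham side via the identity (\ref{eq:comp-polarisation}).

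For the Betti pairing on $H_{\B}\tensor_{\mathbb{Q}}\mathbb{C}$, I would argue as follows. First observe that the definition $H^{p,q} = \comp(F_{\dR}^p)\cap (\id \tensor c_{\B})(\comp(F_{\dR}^q))$ implies $(\id \tensor c_{\B})(H^{p,q}) = H^{q,p}$. Now suppose $\alpha \in H_{\B}\tensor_{\mathbb{Q}}\mathbb{C}$ lies in the radical of the $\mathbb{C}$-linear extension of $\langle\ ,\ \rangle_{\B}$. Decompose $\alpha = \sum_{p+q=n}\alpha^{p,q}$ according to the Hodge decomposition. Pairing $\alpha$ against the element $(\id \tensor c_{\B})(\alpha^{p,q}) \in H^{q,p}$, the first Hodge--Riemann relation kills every cross term (since $\langle H^{p',q'},H^{q,p}\rangle_{\B}=0$ unless $(p',q')=(p,q)$), yielding
$$
0 = \langle \alpha,(\id \tensor c_{\B})(\alpha^{p,q})\rangle_{\B} = \langle \alpha^{p,q},(\id \tensor c_{\B})(\alpha^{p,q})\rangle_{\B}\text{.}
$$
The second Hodge--Riemann relation then forces $\alpha^{p,q}=0$ for every $(p,q)$, hence $\alpha=0$.

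Having shown $\langle\ ,\ \rangle_{\B}\tensor \id$ is non-degenerate on $H_{\B}\tensor_{\mathbb{Q}}\mathbb{C}$, the non-degeneracy of $\langle\ ,\ \rangle_{\B}$ over $\mathbb{Q}$ follows immediately from flatness of $\mathbb{Q}\to \mathbb{C}$: the kernel of the map $H_{\B}\to H_{\B}^{\vee}$ induced by the pairing has zero base change to $\mathbb{C}$, hence is zero. For the de Rham pairing, I would invoke the commutativity of (\ref{eq:comm-pol}) in the form (\ref{eq:comp-polarisation}): the $\mathbb{C}$-linear map $H_{\dR}\tensor_{K}\mathbb{C}\to (H_{\dR}\tensor_{K}\mathbb{C})^{\vee}$ induced by $\langle\ ,\ \rangle_{\dR}$ equals, up to the nonzero scalar $(2\pi i)^{-n}$, the conjugate by the isomorphism $\comp$ of the corresponding Betti map. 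Since the latter is now known to be an isomorphism, so is the former; descending again by flatness along $K\to \mathbb{C}$ yields non-degeneracy of $\langle\ ,\ \rangle_{\dR}$ over $K$.

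There is no real obstacle here: the argument is the standard Hodge-theoretic deduction of non-degeneracy of a polarisation from the Hodge--Riemann relations, and the transfer to the de Rham side is purely formal once (\ref{eq:comp-polarisation}) is available. The only delicate point worth writing out carefully is the compatibility $(\id\tensor c_{\B})(H^{p,q})=H^{q,p}$, which is what makes the test vector $(\id\tensor c_{\B})(\alpha^{p,q})$ land in the unique piece $H^{q,p}$ not killed by the first Hodge--Riemann relation.
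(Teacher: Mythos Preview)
Your argument is correct and is the standard way to deduce non-degeneracy from the Hodge--Riemann relations. The paper does not supply a proof for this remark; it simply states the fact, so there is no alternative argument to compare against. One minor point: the remark attributes the conclusion to the second Hodge--Riemann relation alone, whereas your proof (correctly) also invokes the first relation to kill the cross terms when testing against $(\id\tensor c_{\B})(\alpha^{p,q})$; both are indeed needed for the argument as written.
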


The de Rham pairing $\langle \ , \ \rangle_{\dR}$ is compatible with the single-valued involution as follows.

\begin{lemma}\label{lemma:compatibility-sv-polarisation}
  For every $\omega,\eta \in H_{\dR}\tensor_{K}\mathbb{C}$, we have
  $$
\langle (\sv \tensor c_{\dR})(\omega), (\sv \tensor c_{\dR})(\eta)\rangle_{\dR} = (-1)^n\overline{\langle \omega,\eta \rangle}_{\dR}\text{.}
  $$
\end{lemma}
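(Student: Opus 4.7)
The plan is to reduce the claim to the analogous statement on the Betti side, where it is almost tautological. The key observation is that, since $\langle \ , \ \rangle$ is a morphism in $\mathcal{H}(K)$, the diagram (\ref{eq:comm-pol}) commutes, which is precisely the relation (\ref{eq:comp-polarisation}). This lets us replace the de Rham pairing by the Betti pairing at the cost of a factor $(2\pi i)^n$, and on the Betti side the action of $\id\tensor c_{\B}$ is trivial to control because $\langle \ , \ \rangle_{\B}\tensor \id$ is the $\mathbb{C}$-linear extension of a $\mathbb{Q}$-bilinear form.

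More concretely, I would first apply (\ref{eq:comp-polarisation}) to the left-hand side with $\omega' = (\sv\tensor c_{\dR})(\omega)$ and $\eta' = (\sv\tensor c_{\dR})(\eta)$, so that
\[
(2\pi i)^n \langle (\sv\tensor c_{\dR})(\omega), (\sv\tensor c_{\dR})(\eta)\rangle_{\dR} = \langle \comp(\omega'), \comp(\eta')\rangle_{\B}.
\]
Next, I would invoke Lemma \ref{lemma:betti-conj}, which states exactly that $\comp \circ (\sv\tensor c_{\dR}) = (\id \tensor c_{\B})\circ \comp$, to rewrite $\comp(\omega') = (\id \tensor c_{\B})(\comp(\omega))$, and similarly for $\eta'$.

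The third step is the mechanical observation that if $\langle \ , \ \rangle_{\B}\tensor \id$ is the $\mathbb{C}$-linear extension of a $\mathbb{Q}$-bilinear pairing, then it satisfies
\[
\langle (\id \tensor c_{\B})(\alpha), (\id \tensor c_{\B})(\beta)\rangle_{\B} = \overline{\langle \alpha, \beta \rangle_{\B}}
\]
for all $\alpha,\beta \in H_{\B}\tensor_{\mathbb{Q}}\mathbb{C}$; this is checked on pure tensors. Applying this with $\alpha = \comp(\omega)$, $\beta = \comp(\eta)$, and using (\ref{eq:comp-polarisation}) once more on the right-hand side, I get
\[
(2\pi i)^n\langle (\sv\tensor c_{\dR})(\omega), (\sv\tensor c_{\dR})(\eta)\rangle_{\dR} = \overline{\langle \comp(\omega), \comp(\eta)\rangle_{\B}} = \overline{(2\pi i)^n\langle \omega, \eta \rangle_{\dR}} = (-2\pi i)^n \overline{\langle \omega, \eta\rangle}_{\dR}.
\]
Dividing by $(2\pi i)^n$ gives the desired $(-1)^n$, and the proof is complete.

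There is no real obstacle here: the statement is a formal consequence of Lemma \ref{lemma:betti-conj} together with the definition of a polarisation as a morphism in $\mathcal{H}(K)$. The only subtlety worth flagging is the distinction between conjugating scalars ($c_{\dR}$, $c_{\B}$) and conjugating vectors via the real Frobenius $F_{\infty}$; once (\ref{eq:comp-polarisation}) is available, this distinction is handled automatically by the bookkeeping above, and the factor $(-1)^n$ comes purely from $\overline{(2\pi i)^n}/(2\pi i)^n$.
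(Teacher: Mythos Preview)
Your proof is correct and coincides with the paper's \emph{alternative} argument, which applies Lemma~\ref{lemma:betti-conj} together with (\ref{eq:comp-polarisation}) and the identity $\langle (\id \tensor c_{\B})(\alpha), (\id \tensor c_{\B})(\beta) \rangle_{\B} = \overline{\langle \alpha,\beta\rangle}_{\B}$. The paper also offers a first proof that is slightly different in flavour: it reduces to $\omega,\eta \in H_{\dR}$ (where $\sv\tensor c_{\dR}$ is just $\sv$), and then uses the functoriality (\ref{eq:sv-commute}) of $\sv$ with respect to the morphism $\langle\ ,\ \rangle$, together with the fact that $\sv$ on $\mathbb{Q}(-n)$ is multiplication by $(-1)^n$; this avoids passing through the Betti side and the factor $(2\pi i)^n$ altogether, but is otherwise of the same weight.
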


\begin{proof}
  Since the de Rham pairing is defined over $K\subset \mathbb{R}$, it suffices to prove the above formula for $\omega,\eta \in H_{\dR}$. In this case, we have $(\sv \tensor c_{\dR})(\omega) = \sv(\omega)$ and similarly for $\eta$. The equality $\langle \sv(\omega),\sv(\eta)\rangle_{\dR} = (-1)^n\langle \omega,\eta\rangle_{\dR}$ then follows from (\ref{eq:sv-commute}) and from the fact that the single-valued involution of $\mathbb{Q}(-n)$ is $(-1)^n$ (Example \ref{ex:tate-obj}).

  Alternatively, we may apply Lemma \ref{lemma:betti-conj} together with the formula (\ref{eq:comp-polarisation}), and use that $\langle (\id \tensor c_{\B})(\alpha), (\id \tensor c_{\B})(\beta) \rangle_{\B} = \overline{\langle \alpha,\beta\rangle}_{\B}$ for every $\alpha,\beta \in H_{\B}\tensor_{\mathbb{Q}}\mathbb{C}$. 
\end{proof}

The presence of a polarisation imposes additional algebraic relations on single-valued periods. For future reference, we spell out these relations in the particular case where $H$ is pure of Hodge type $\{(n,0),(0,n)\}$ for some $n\in \mathbb{Z}$.

In this case, let $d \defeq \dim F_{\dR}^n$, so that $\dim H_{\dR} = 2d$. Consider a $K$-basis
\begin{align}\label{eq:symplectic-Hodge-basis}
b_{\dR} = (\omega_1,\ldots,\omega_d,\eta_1,\ldots,\eta_d)
\end{align}
of $H_{\dR}$ such that
\begin{itemize}
\item $(\omega_1,\ldots,\omega_d)$ is a $K$-basis of $F_{\dR}^n$;
\item $\langle \omega_i, \eta_j \rangle_{\dR} = \delta_{ij}$ and $\langle \eta_i,\eta_j \rangle_{\dR} = 0$ for every $1\le i,j\le d$.
\end{itemize}
Note that $\langle \omega_i, \omega_j \rangle_{\dR}=0$ because $F_{\dR}^n$ is isotropic for $\langle \ , \ \rangle_{\dR}$ by the first Hodge--Riemann relation. Such bases always exist by a simple variation of the Gram--Schmidt process (cf. Remark \ref{obs:non-degeneracy}).

Let us write the matrix $S$ of the single-valued involution $\sv: H_{\dR}\tensor_{K}\mathbb{R} \to H_{\dR}\tensor_{K}\mathbb{R}$ in the basis $b_{\dR}$ in block form:
$$
S= \left(\begin{array}{cc}
           A & B \\
           C & D
         \end{array}\right) \in {\GL}_{2d}(\mathbb{R})\text{,}
       $$
where $A,B,C,D \in \M_{d\times d}(\mathbb{R})$. 
       
\begin{prop}\label{prop:svp-matrix-polarisation}
 Let $H$ be a polarisable pure object of $\mathcal{H}(K)$, pure of Hodge type $\{(n,0),(0,n)\}$ for some $n\in \mathbb{Z}$. With the above notation, we have $C \in \GL_d(\mathbb{R})$, and
  $$
  B = (1-A^2)C^{-1}\text{, }\ \ \ D=(-1)^{n}A^t\text{,}\ \ \ CA = (-1)^{n+1}A^tC\text{, } \ \ \ C = C^t\text{.}
  $$
  In particular, the field $K(S)$ generated by all of the coefficients of $S$ is equal to the field $K(A,C)$, generated by the coefficients of $A$ and $C$.
\end{prop}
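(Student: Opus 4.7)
The plan is to translate the statement into matrix identities in the basis $b_{\dR}$, and then to combine the involution property $\sv^2 = \id$ with the polarisation compatibility of $\sv$ provided by Lemma~\ref{lemma:compatibility-sv-polarisation}. In the basis $b_{\dR}$, the Gram matrix of $\langle\,,\,\rangle_{\dR}$ is
\[
J = \begin{pmatrix} 0 & I \\ (-1)^n I & 0 \end{pmatrix},
\]
since the pairing is $(-1)^n$-symmetric. The equality $S^2 = \id$ decomposes into the four block identities $A^2 + BC = I$, $AB + BD = 0$, $CA + DC = 0$, $CB + D^2 = I$. Lemma~\ref{lemma:compatibility-sv-polarisation}, applied to real vectors (where the complex conjugation on the right-hand side is trivial), becomes $S^t J S = (-1)^n J$, which unpacks into additional identities, the crucial ones being $A^t C + (-1)^n C^t A = 0$ and $A^t D + (-1)^n C^t B = (-1)^n I$.

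The step I expect to be the hardest, and the only place where input beyond formal linear algebra is needed, is proving $C \in \GL_d(\RR)$. Here I would invoke the Hodge--Riemann bilinear relations. For $0 \neq \omega \in F_{\dR}^n \tensor_K \RR$, one has $\comp(\omega) \in H^{n,0}$; the second Hodge--Riemann relation then reads $i^n\langle \comp(\omega), (\id\tensor c_{\B})\comp(\omega)\rangle_{\B} > 0$. Combining Lemma~\ref{lemma:betti-conj} with (\ref{eq:comp-polarisation}) to rewrite the left-hand side yields $(-1)^n (2\pi)^n \langle \omega, \sv(\omega)\rangle_{\dR} > 0$. A short computation, expanding $\omega = \sum_i x_i \omega_i$ and using the symplectic normalisation of $b_{\dR}$, shows $\langle \omega, \sv(\omega)\rangle_{\dR} = x^t C x$. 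Hence $(-1)^n C$ is positive definite, and in particular $C$ is invertible.

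Once $C$ is invertible, the remaining identities are block matrix bookkeeping. From $A^2 + BC = I$ and $CA + DC = 0$ I immediately obtain $B = (I - A^2)C^{-1}$ and $D = -CAC^{-1}$. Substituting these into $A^t D + (-1)^n C^t B = (-1)^n I$, clearing $C^{-1}$ on the right, and using $A^t C = (-1)^{n+1} C^t A$ to simplify $A^t C A = (-1)^{n+1} C^t A^2$, the terms quadratic in $A$ cancel and one is left with $C^t = C$. Symmetry of $C$ then turns $A^t C = (-1)^{n+1} C^t A$ into $CA = (-1)^{n+1} A^t C$, and $D = -CAC^{-1} = (-1)^n A^t$. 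Since $B$ and $D$ are $K$-rational functions of $A$ and $C$, the identity $K(S) = K(A,C)$ follows at once.
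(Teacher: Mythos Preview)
Your proof is correct and follows essentially the same strategy as the paper: combine $S^2=\id$ with the polarisation compatibility $S^tJS=(-1)^nJ$ to obtain the block identities, and appeal to Hodge theory for the invertibility of $C$. The only noteworthy difference is in the argument for $C\in\GL_d(\mathbb{R})$: the paper observes directly that $(\omega_1,\ldots,\omega_d,\sv(\omega_1),\ldots,\sv(\omega_d))$ is a basis via the Hodge decomposition $H_{\B}\otimes\mathbb{C}=H^{n,0}\oplus H^{0,n}$ together with Lemma~\ref{lemma:betti-conj}, whereas you invoke the second Hodge--Riemann relation to get $(-1)^n x^tCx>0$ for all nonzero real $x$. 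Your route is slightly less economical (it uses the full polarisation rather than just the Hodge type), but it has the advantage of already containing the positive-definiteness statement of Lemma~\ref{lemma:derham-hermitian}. The order in which you extract the identities (first $S^2=\id$, then the polarisation) is the reverse of the paper's, but this is immaterial.
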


In coordinate-free parlance, the field of rationality of $\sv : H_{\dR}\tensor_{K} \mathbb{R} \to H_{\dR}\tensor_{K} \mathbb{R}$ coincides with the field of rationality of $\sv|_{F_{\dR}^n\tensor_K\mathbb{R}}: F_{\dR}^n\tensor_K\mathbb{R} \to  H_{\dR}\tensor_{K} \mathbb{R}$.

\begin{proof}
  Since $\omega_i$ are defined over $K\subset \mathbb{R}$, we have $(\sv \tensor c_{\dR})(\omega_i) = \sv(\omega_i)$. It follows from Lemma \ref{lemma:betti-conj} and from the Hodge decomposition $H_{\B}\tensor_{\mathbb{Q}}\mathbb{C} = H^{n,0} \oplus H^{0,n}$ that
 $$
  (\omega_1,\ldots,\omega_d,\sv(\omega_1),\ldots,\sv(\omega_d)) = (\omega_1,\ldots,\omega_d,\eta_1,\ldots,\eta_d) \cdot \left(\begin{array}{cc}                                                                                       1 & A \\
                            0 & C                                                                    \end{array}\right)
 $$
 is a basis of $H_{\dR}\tensor_K\mathbb{R}$; this proves that $C \in \GL_{d}(\mathbb{R})$.

 It follows from the definition of the basis $b_{\dR}$, and from Lemma \ref{lemma:compatibility-sv-polarisation}, that
 $$
 S^t \left(\begin{array}{cc}
             0 & (-1)^n \\
             1 & 0
     \end{array}\right)S = (-1)^n \left(\begin{array}{cc}
             0 & (-1)^n \\
             1 & 0
     \end{array}\right)
   $$
   or, equivalently, using that $S^{-1}=S$,
   $$
\left(\begin{array}{cc}
             C^t & (-1)^nA^t \\
             D^t & (-1)^nB^t
     \end{array}\right) = \left(\begin{array}{cc}
             C & D \\
             (-1)^nA & (-1)^nB
     \end{array}\right)
   $$
   This proves the relations $D=(-1)^nA^t$ and $C=C^t$.

 Finally, the equation $S^2=1$ gives
$$
S^2 = \left(\begin{array}{cc}
            A^2 +BC & AB+BD \\
            CA +DC & CB+D^2
    \end{array}\right) = \left(\begin{array}{cc}
            1 & 0 \\
            0 & 1
    \end{array}\right)
  $$
so that $B= (1-A^2)C^{-1}$, and $CA = -DC = (-1)^{n+1}A^tC$. 
\end{proof}

\begin{obs}
  With the above notation, set $R \defeq AC^{-1}$. Then, $R^t=(-1)^{n+1}R$, $C^t=C$, and we have
  $$
S = \left(\begin{array}{cc}
             RC & C^{-1} - RCR \\
             C & -CR
     \end{array}\right) \in {\GL}_{2d}(\mathbb{R})\text{.}
   $$
\end{obs}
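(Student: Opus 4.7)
My plan is to derive the four displayed identities from three ingredients: (i) the Hodge type $\{(n,0),(0,n)\}$, which forces $\sv$ to map $F_{\dR}^n\tensor_K\mathbb{R}$ onto a complement of $F_{\dR}^n\tensor_K\mathbb{R}$ in $H_{\dR}\tensor_K\mathbb{R}$; (ii) the compatibility of $\sv$ with $\langle\ ,\ \rangle_{\dR}$ from Lemma \ref{lemma:compatibility-sv-polarisation}; and (iii) the involutivity $S^2=\id$. For (i), since $\omega_i\in F_{\dR}^n$ we have $\comp(\omega_i)\in H^{n,0}$, and Lemma \ref{lemma:betti-conj} then gives $\comp(\sv(\omega_i)) = (\id\tensor c_{\B})(\comp(\omega_i)) \in H^{0,n}$. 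Using $H_{\B}\tensor_{\mathbb{Q}}\mathbb{C} = H^{n,0}\oplus H^{0,n}$, the family $(\omega_1,\dots,\omega_d,\sv(\omega_1),\dots,\sv(\omega_d))$ is therefore a $\mathbb{C}$-basis of $H_{\dR}\tensor_K\mathbb{C}$; since all its members lie in $H_{\dR}\tensor_K\mathbb{R}$, it is in fact an $\mathbb{R}$-basis of that real vector space. As the change-of-basis matrix from $b_{\dR}$ is $\left(\begin{smallmatrix} I & A \\ 0 & C\end{smallmatrix}\right)$, this forces $C\in\GL_d(\mathbb{R})$.

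For (ii), I first observe that the Gram matrix of $\langle\ ,\ \rangle_{\dR}$ in the basis $b_{\dR}$ is $J = \left(\begin{smallmatrix} 0 & I \\ (-1)^n I & 0\end{smallmatrix}\right)$: the block form is immediate from the defining properties of $b_{\dR}$, and the $(-1)^n$-symmetry of $\langle\ ,\ \rangle_{\dR}$ is inherited from the Betti side via the identity (\ref{eq:comp-polarisation}). Since the $\omega_i$ and $\eta_j$ are defined over $K\subset\mathbb{R}$, Lemma \ref{lemma:compatibility-sv-polarisation} translates directly into the matrix identity $S^t J S = (-1)^n J$. Combined with $S^{-1}=S$, this rearranges to $S^t = (-1)^n J S J^{-1}$, and a single block multiplication of the right-hand side simultaneously produces the three relations $D = (-1)^n A^t$, $C = C^t$, and $B = B^t$.

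For (iii), expanding $S^2 = \id$ in block form yields the four equations $A^2+BC = I$, $AB+BD = 0$, $CA+DC = 0$, and $CB+D^2 = I$. The first, together with the invertibility of $C$, gives $B = (1-A^2)C^{-1}$; substituting $D = (-1)^n A^t$ from step (ii) into the third gives $CA = -DC = (-1)^{n+1}A^tC$. This completes the list of identities. Since $B$ and $D$ are polynomial expressions in $A$ and $C^{\pm 1}$, we obtain $K(S) = K(A,C)$. I do not anticipate a serious obstacle: the content is a purely mechanical matrix manipulation once the Hodge decomposition and polarisation compatibility have been invoked. The main care needed is in bookkeeping the transposes and the signs $(-1)^n$ coming out of $S^t J S = (-1)^n J$, and in verifying once at the outset that the pairing $\langle\ ,\ \rangle_{\dR}$ takes the stated skew form in the basis $b_{\dR}$.
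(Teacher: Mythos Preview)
Your proposal is a correct proof, but of the \emph{preceding} Proposition~\ref{prop:svp-matrix-polarisation} rather than of the Remark you were asked about. The ``four displayed identities'' you derive are exactly the four relations $B=(1-A^2)C^{-1}$, $D=(-1)^nA^t$, $CA=(-1)^{n+1}A^tC$, $C=C^t$ of that proposition, and your final line $K(S)=K(A,C)$ is its ``In particular'' clause. Your argument for that proposition is essentially the same as the paper's: the invertibility of $C$ via the Hodge decomposition and Lemma~\ref{lemma:betti-conj}, the symmetry relations via $S^tJS=(-1)^nJ$ combined with $S^{-1}=S$, and the remaining relations from $S^2=\id$.

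The Remark itself is a one-line reformulation of the proposition, which the paper states without proof. To finish, you only need to substitute $A=RC$: the relation $CA=(-1)^{n+1}A^tC$ becomes $CRC=(-1)^{n+1}C^tR^tC=(-1)^{n+1}CR^tC$, whence $R^t=(-1)^{n+1}R$ since $C$ is invertible; then $D=(-1)^nA^t=(-1)^nC^tR^t=(-1)^nC\cdot(-1)^{n+1}R=-CR$; and $B=(1-A^2)C^{-1}=C^{-1}-RCRCC^{-1}=C^{-1}-RCR$. This gives the displayed block form of $S$.
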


We finish this section with an alternative way of encoding the single-valued period matrix, in terms of the Hermitian form defined by the polarisation. 

\begin{lemma}\label{lemma:derham-hermitian}
  The $\mathbb{R}$-bilinear form on $H_{\dR}\tensor_{K}\mathbb{C}$ with values in $\mathbb{C}$ defined by
    $$
( \omega,\eta)_{\dR} \defeq (-1)^n\langle \omega, (\sv\tensor c_{\dR})(\eta)\rangle_{\dR}
$$
is Hermitian. Moreover, the restriction of $( \ , \ )_{\dR}$ to $F_{\dR}^n\tensor_{K}\mathbb{C}$ is positive definite.
\end{lemma}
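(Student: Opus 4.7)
The plan is to verify Hermitian symmetry by purely formal manipulations (using $(-1)^n$-symmetry of $\langle\,,\,\rangle_{\dR}$, Lemma \ref{lemma:compatibility-sv-polarisation}, and $(\sv \tensor c_{\dR})^2 = \id$), and then to establish positive-definiteness on $F_{\dR}^n \tensor_K \mathbb{C}$ by transporting the computation to the Betti side via Lemma \ref{lemma:betti-conj} and (\ref{eq:comp-polarisation}), so that the result reduces to the second Hodge--Riemann relation applied in $H^{n,0} = \comp(F_{\dR}^n)$.

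For Hermitian symmetry, first I would note that $\sv \tensor c_{\dR}$ is $\mathbb{R}$-linear but $\mathbb{C}$-antilinear, so $(\omega,\eta)_{\dR}$ is $\mathbb{C}$-linear in $\omega$ and $\mathbb{C}$-antilinear in $\eta$. The polarisation being a morphism into $\mathbb{Q}(-n)$ (combined with (\ref{eq:comp-polarisation})) forces $\langle\,,\,\rangle_{\dR}$ to be $(-1)^n$-symmetric. Starting from $\overline{(\eta,\omega)}_{\dR} = (-1)^n\,\overline{\langle \eta,(\sv\tensor c_{\dR})(\omega)\rangle}_{\dR}$ and applying Lemma \ref{lemma:compatibility-sv-polarisation} (with $\alpha = \eta$, $\beta = (\sv\tensor c_{\dR})(\omega)$, using $(\sv\tensor c_{\dR})^2 = \id$), I get $\overline{\langle \eta,(\sv\tensor c_{\dR})(\omega)\rangle}_{\dR} = (-1)^n \langle (\sv\tensor c_{\dR})(\eta),\omega\rangle_{\dR}$. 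Then the $(-1)^n$-symmetry rewrites this as $\langle\omega,(\sv\tensor c_{\dR})(\eta)\rangle_{\dR}$ up to a sign $(-1)^n$, and the three factors of $(-1)^n$ collapse to give exactly $(\omega,\eta)_{\dR}$.

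For positive-definiteness, I would set $\eta = \omega \in F_{\dR}^n\tensor_K\mathbb{C}$ and rewrite $(\omega,\omega)_{\dR}$ on the Betti side. By Lemma \ref{lemma:betti-conj}, $(\sv\tensor c_{\dR})(\omega) = \comp^{-1}(\id\tensor c_{\B})(\comp(\omega))$, and by (\ref{eq:comp-polarisation}),
\[
(\omega,\omega)_{\dR} = \frac{(-1)^n}{(2\pi i)^n}\, \langle \comp(\omega),\,(\id\tensor c_{\B})(\comp(\omega))\rangle_{\B}.
\]
Since $H$ is of Hodge type $\{(n,0),(0,n)\}$, the isomorphism $\comp$ sends $F_{\dR}^n\tensor_K\mathbb{C}$ onto $H^{n,0}$, so $\alpha \defeq \comp(\omega) \in H^{n,0}$. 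The second Hodge--Riemann relation says $i^n \langle \alpha, (\id\tensor c_{\B})(\alpha)\rangle_{\B} > 0$ for $\alpha \neq 0$. Pulling out an $i^{-n}$, the overall scalar is
\[
\frac{(-1)^n}{(2\pi i)^n\, i^n} = \frac{(-1)^n}{(2\pi)^n\, i^{2n}} = \frac{1}{(2\pi)^n},
\]
which is strictly positive, so $(\omega,\omega)_{\dR} > 0$ whenever $\omega \neq 0$.

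There is no real obstacle here; the argument is bookkeeping of signs and powers of $2\pi i$. The only point requiring a small amount of care is tracking that $(\sv\tensor c_{\dR})$ is $\mathbb{C}$-antilinear (so that the form is sesquilinear in the conventional sense) and that $(\sv\tensor c_{\dR})^2 = \id$, both of which are immediate from $\sv^2 = \id$ and $c_{\dR}^2 = \id$.
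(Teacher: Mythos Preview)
Your proof is correct and essentially matches the paper's. The only minor difference is that for Hermitian symmetry the paper cites Lemma~\ref{lemma:betti-conj} and formula~(\ref{eq:comp-polarisation}) directly (i.e.\ transports to the Betti side), whereas you stay on the de Rham side and invoke Lemma~\ref{lemma:compatibility-sv-polarisation}; since that lemma was itself proved from those same ingredients, the two arguments are the same in substance. Your treatment of positive-definiteness via the second Hodge--Riemann relation, including the sign and $(2\pi)^n$ bookkeeping, is exactly what the paper has in mind.
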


\begin{proof}
  That $( \ , \ )_{\dR}$ is a Hermitian form (i.e., $( z\omega , w\eta )_{\dR} = z\overline{w}(\omega,\eta)_{\dR}$ for $z,w \in \mathbb{C}$) follows from formula (\ref{eq:comp-polarisation}) and from Lemma \ref{lemma:betti-conj}. The positive definiteness on $F_{\dR}^n\tensor_{K}\mathbb{C}$ is a consequence of the second Hodge--Riemann relation.
\end{proof}

Consider a de Rham basis $b_{\dR} = (\omega_1,\ldots,\omega_d,\eta_1,\ldots,\eta_d)$ as above. Then
\begin{align}\label{eq:dR-form-sv-period}
  (\omega_i,\omega_j)_{\dR} = (-1)^n\langle \omega_i, \sv(\omega_j)\rangle_{\dR}=(-1)^n\sum_{k=1}^d\left(A_{kj}\langle \omega_i,\omega_k\rangle_{\dR} + C_{kj}\langle \omega_i,\eta_k \rangle_{\dR}\right) = (-1)^nC_{ij}\text{,}
\end{align}
and, similarly,
\begin{align}\label{eq:dR-form-sv-period2}
 (\omega_i,\eta_j)_{\dR} = (-1)^nD_{ij}\text{, }\ \ \ (\eta_i,\omega_j)_{\dR} = A_{ij}\text{, } \ \ \ (\eta_i,\eta_j)_{\dR} = B_{ij}\text{.}
\end{align}

Note that we proved in Proposition \ref{prop:svp-matrix-polarisation} that $C$ is invertible; using the above formulas, we see that this is equivalent to the positive definiteness of $( \ , \ )_{\dR}$ over $F_{\dR}^n\tensor_K\mathbb{C}$ (Lemma \ref{lemma:derham-hermitian}).

\section{Spaces of weakly holomorphic modular forms}\label{sec:spaces}

Let $r$ and $N\ge 1$ be integers.

\begin{defi}
A \emph{weakly holomorphic modular form} of weight $r$ and level $\Gamma_0(N)$ is a holomorphic function $f: \mathbb{H} \to \mathbb{C}$ which is modular of weight $r$ for $\Gamma_0(N)$:
  $$
f|_{\gamma,r} = f
$$
for every $\gamma \in \Gamma_0(N)$, and meromorphic at all cusps: for every $g \in \SL_2(\mathbb{Z})$, there exists $\rho >0$ such that
$$
f|_{g,r}= O(e^{\rho \Im \tau})
$$
as $\Im \tau \rightarrow +\infty$.
\end{defi}

The $\mathbb{C}$-vector space of weakly holomorphic modular forms of weight $r$ and level $\Gamma_0(N)$ is denoted by $M^!_r(\Gamma_0(N))$. Note that $M_r^!(\Gamma_0(N))=0$ for every odd $r$, since $\bigl( \begin{smallmatrix}-1 & 0\\ 0 & -1\end{smallmatrix}\bigr) \in \Gamma_0(N)$.

Every $f \in M_r^!(\Gamma_0(N))$ admits a \emph{Fourier expansion at infinity}, which we denote by
$$
f = \sum_{n \gg -\infty}a_n(f)q^n
$$
where $q = e^{2\pi i \tau}$, and $a_n(f) \in \mathbb{C}$. For every $n \in \mathbb{Z}$, we can regard
$$
a_n : M_r^!(\Gamma_0(N)) \to \mathbb{C}\text{, }\qquad f \mapsto a_n(f)
$$
as a linear functional on $M_r^!(\Gamma_0(N))$. If $K$ is any subfield of $\mathbb{C}$, we denote by
$$
M_r^!(\Gamma_0(N);K)
$$
the $K$-subspace of $M_r^!(\Gamma_0(N))$ consisting of those $f$ such that $a_n(f)\in K$ for every $n \in \mathbb{Z}$.

\begin{defi}
For $f \in M_r^!(\Gamma_0(N);K)$, the (extended) \emph{principal part of $f$ at the cusp at infinity} is defined by the finite sum $\mathcal{P}_f = \sum_{n\le 0}a_n(f)q^n \in K[q^{-1}]$.
\end{defi}

There are also Fourier expansions at other cusps. Let
$$
g = \left(\begin{array}{cc}
      a & b \\
      c & d
    \end{array}\right) \in {\SL}_2(\mathbb{Z})\text{.}
$$
If $w$ denotes the smallest positive integer such that $\bigl( \begin{smallmatrix}1 & w\\ 0 & 1\end{smallmatrix}\bigr) \in g^{-1}\Gamma_0(N)g$ (the `\emph{width} of the cusp determined by $g$'), then we can write
$$
f|_{g,r} = \sum_{n\gg -\infty}a_{n,g}(f) q^{\frac{n}{w}}  
$$
for some $a_{n,g}(f) \in \mathbb{C}$. This is a `Fourier expansion of $f$ at the cusp $[(a:c)] \in  \Gamma_0(N)\backslash \mathbb{P}^1(\mathbb{Q})$', but it really depends on $g$: for $j \in \mathbb{Z}$, we have
$$
a_{n,gT^j} = e^{2\pi i \frac{j}{w}}a_{n,g}\text{,}
$$
where $T= \left(\begin{smallmatrix}1 & 1 \\ 0 & 1\end{smallmatrix}\right)$. Note however that the vanishing of the $n$th Fourier coefficient at a given cusp is a well defined property.

\begin{defi}
A \emph{weakly holomorphic cusp form} of weight $r$ and level $\Gamma_0(N)$ is a weakly holomorphic modular form $f \in M_r^!(\Gamma_0(N))$ such that $a_{0,g}(f)=0$ for every $g \in \SL_2(\mathbb{Z})$.
\end{defi}

We denote the subspace of weakly holomorphic cusp forms of weight $r$ and level $\Gamma_0(N)$ by $S_r^!(\Gamma_0(N)) \subset M_r^!(\Gamma_0(N))$. Similarly, for every subfield $K\subset \mathbb{C}$, we set
$$
S_r^!(\Gamma_0(N);K) \defeq S_r^!(\Gamma_0(N))\cap M_r^!(\Gamma_0(N);K)\text{.}
$$

Note that the space $M_r(\Gamma_0(N))$ (resp. $S_r(\Gamma_0(N))$) of modular forms (resp. cusp forms) of weight $r$ and level $\Gamma_0(N)$ is the subspace of $M_r^!(\Gamma_0(N))$ consisting of those $f$ such that $a_{n,g}(f)=0$ for every $n<0$ (resp. $n\le 0$) and every $g \in \SL_2(\mathbb{Z})$. For a subfield $K$ of $\mathbb{C}$, we also set
$$
M_r(\Gamma_0(N);K) \defeq M_r(\Gamma_0(N))\cap M_r^!(\Gamma_0(N);K)
$$
and
$$
S_r(\Gamma_0(N);K) \defeq S_r(\Gamma_0(N))\cap S_r^!(\Gamma_0(N);K) \text{.}
$$

\begin{obs}\label{obs:negative-weight}
  Recall that we always have $M_r(\Gamma_0(N))=0$ for $r<0$, although $M_r^!(\Gamma_0(N))$ can be non-trivial. This follows from the `valence formula' (see \cite{CS17} 5.6.1): for every $f \in M_{r}^{!}(\Gamma_0(N))$,
  $$
\frac{1}{[\SL_2(\mathbb{Z}) : \Gamma_0(N)]}\sum_{p \in \Gamma_0(N)\backslash\mathbb{H} \cup \mathbb{P}^1(\mathbb{Q})}\frac{\ord_{p}(f)}{e_p} =  \frac{r}{12}
$$
where $e_{p}=2$ (resp. $e_{p}=3$) if $p= \Gamma_0(N)\cdot i$ (resp. $p= \Gamma_0(N)\cdot e^{\frac{2\pi i}{3} }$), and $e_{p}=1$ otherwise.
\end{obs}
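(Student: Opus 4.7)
The plan is to read off the vanishing $M_r(\Gamma_0(N))=0$ for $r<0$ directly from the valence formula quoted from \cite{CS17}, so that all that is required is a sign argument. Suppose, for contradiction, that some nonzero $f$ lies in $M_r(\Gamma_0(N))$ with $r<0$. By definition of $M_r(\Gamma_0(N))$ (as opposed to $M_r^!(\Gamma_0(N))$), $f$ is holomorphic on $\mathbb{H}$ and at every cusp, so $\ord_{p}(f)\ge 0$ for every class $p \in \Gamma_0(N)\backslash(\mathbb{H}\cup \mathbb{P}^1(\mathbb{Q}))$. Since the ramification indices $e_p \in \{1,2,3\}$ are positive, every summand on the left-hand side of the valence formula is non-negative, hence the whole left-hand side is $\ge 0$. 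The right-hand side, however, equals $r/12 < 0$, a contradiction. Therefore $f=0$.

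The companion assertion that $M_r^!(\Gamma_0(N))$ can be nontrivial even in negative weight is witnessed by negative powers of the modular discriminant: $\Delta \in S_{12}(\SL_2(\mathbb{Z}))$ is nowhere vanishing on $\mathbb{H}$, so $\Delta^{-m} \in M_{-12m}^!(\Gamma_0(N))$ is a nonzero element for every $m\ge 1$; more generally, multiplication by $\Delta^{-m}$ embeds $M_{r+12m}(\Gamma_0(N))$ into $M_r^!(\Gamma_0(N))$, and the former is nonzero for $m$ large. There is no substantive obstacle here; the only minor bookkeeping point is the unambiguous definition of $\ord_p(f)$ at a cusp via any representative $g\in \SL_2(\mathbb{Z})$, which is legitimate because the transformation $a_{n,gT^j}=e^{2\pi i j/w}a_{n,g}$ preserves vanishing orders of Fourier coefficients. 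Everything else is simply the valence formula doing its work.
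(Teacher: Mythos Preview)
Your argument is correct and is exactly the sign argument the paper intends: the remark is stated as a recall with no separate proof beyond invoking the valence formula, and you have simply spelled out the implicit contradiction (nonnegative left-hand side versus $r/12<0$). Your $\Delta^{-m}$ example for the nontriviality of $M_r^!(\Gamma_0(N))$ is a standard illustration that the paper does not bother to give, but it is entirely in keeping with the remark's spirit.
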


We shall also consider weakly holomorphic modular forms which are holomorphic at all \emph{finite} cusps. Namely, we define a subspace $M_r^{!,\infty}(\Gamma_0(N))$ of $M^!_r(\Gamma_0(N))$ consisting of $f$ such that $a_{n,g}(f)=0$ for every $n<0$ and $g = \bigl( \begin{smallmatrix}a & b\\ c & d\end{smallmatrix}\bigr) \in \SL_2(\mathbb{Z})$ with $c\not\equiv 0 \pmod N$. In other words, $M_r^{!,\infty}(\Gamma_0(N))$ is the space of weakly holomorphic modular forms with no pole at any cusp different from $\infty$, but which are allowed to have poles at $\infty$. We also define $S_r^{!,\infty}(\Gamma_0(N)) \defeq S_r^!(\Gamma_0(N)) \cap M_{r}^{!,\infty}(\Gamma_0(N))$ and, for a subfield $K$ of $\mathbb{C}$,
$$
M_{r}^{!,\infty}(\Gamma_0(N);K) \defeq M_r^{!,\infty}(\Gamma_0(N)) \cap M_r^!(\Gamma_0(N);K)
$$
and
$$
S_{r}^{!,\infty}(\Gamma_0(N);K) \defeq S_r^{!,\infty}(\Gamma_0(N)) \cap S_r^!(\Gamma_0(N);K)\text{.}
$$
We then have the following diagram of $K$-vector spaces, where all the arrows are the natural inclusions
$$
\begin{tikzcd}
  M_r(\Gamma_0(N);K) \arrow[hook]{r} & M^{!,\infty}_r(\Gamma_0(N);K) \arrow[hook]{r} & M^!_r(\Gamma_0(N);K)\\
  S_r(\Gamma_0(N);K) \arrow[hook]{r}\arrow[hook]{u} & S^{!,\infty}_r(\Gamma_0(N);K) \arrow[hook]{u}\arrow[hook]{r} & S^!_r(\Gamma_0(N);K)\arrow[hook]{u}
\end{tikzcd}
$$
Note that $M_r(\Gamma_0(N);K)$ (resp. $S_r(\Gamma_0(N);K)$) is the subspace of $M^{!,\infty}_r(\Gamma_0(N);K)$ of weakly holomorphic modular forms with constant (resp. vanishing) principal part at the cusp at infinity. 

We conclude this subsection with the example of Poincaré series. Let $\Gamma_{\infty} \le \Gamma_0(N)$ be the stabiliser of $\infty=(1:0) \in \mathbb{P}^1(\mathbb{Q})$. The proofs of the following propositions are well known in weight $k\ge 4$; see, for instance, \cite{CS17} Chapter 8. For Poincaré series of weight 2, a precise exposition can be found in \cite{rankin77} 5.7. 

\begin{prop}\label{prop:poincare}
  Let $k\ge 2$ and $m$ be integers, with $k$ even.
  \begin{enumerate}
  \item If $k\ge 4$, then
    the series
  $$
P_{m,k,N}(\tau) = \sum_{\gamma \in \Gamma_{\infty}\backslash\Gamma_0(N)}\frac{e^{2\pi i m \gamma \cdot \tau}}{j(\gamma,\tau)^k}
$$
is uniformly absolutely convergent on sets of the form $\{\tau \in \mathbb{H} \mid |\Re \tau|\le \alpha\text{, } \Im \tau \ge \beta\}$, for $\alpha,\beta > 0$, and we have
$$
P_{m,k,N} \in S_k^{!,\infty}(\Gamma_0(N);\mathbb{R})\text{.}
$$
\item If $k=2$, then the series
  $$
P_{m,2,N}(\tau,\epsilon) = \sum_{\gamma \in \Gamma_{\infty}\backslash\Gamma_0(N)}\frac{e^{2\pi i m \gamma \cdot \tau}}{j(\gamma,\tau)^2|j(\gamma,\tau)|^{2\epsilon}}
$$
is uniformly absolutely convergent on sets of the form $\{(\tau,\epsilon) \in \mathbb{H} \times \mathbb{R} \mid |\Re \tau|\le \alpha\text{, } \Im \tau \ge \beta\text{, }\epsilon \ge \epsilon_0\}$, for $\alpha,\beta,\epsilon_0 > 0$. For every $\gamma \in \Gamma_0(N)$, we have
$$
P_{m,2,N}(\gamma \cdot \tau,\epsilon) = j(\gamma,\tau)^2|j(\gamma,\tau)|^{2\epsilon} P_{m,2,N}( \tau,\epsilon)\text{,}
$$
and, if $m\neq 0$, then
$$
P_{m,2,N}\defeq \lim_{\epsilon \rightarrow 0+}P_{m,2,N}(\cdot ,\epsilon) \in S_{2}^{!,\infty}(\Gamma_0(N);\mathbb{R})\text{.}
$$
  \end{enumerate}
\end{prop}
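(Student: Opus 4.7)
The proof is classical and proceeds in three steps: (a) prove the uniform absolute convergence of the defining series, (b) deduce holomorphy and modularity, and (c) analyse the behaviour of the sum at every cusp in order to locate it in $S_k^{!,\infty}(\Gamma_0(N);\mathbb{R})$. For step (a) with $k\ge 4$, the key observation is that for any non-identity coset representative $\gamma = \bigl(\begin{smallmatrix}a & b\\ c & d\end{smallmatrix}\bigr) \in \Gamma_\infty\backslash\Gamma_0(N)$ we have $|c|\ge N$, so for $\tau$ in the tubular region $\{|\Re\tau|\le \alpha,\, \Im\tau\ge \beta\}$,
$$
\Im(\gamma\cdot\tau) \;=\; \frac{\Im\tau}{|c\tau+d|^2} \;\le\; \frac{\Im\tau}{c^2(\Im\tau)^2} \;\le\; \frac{1}{N^2 \beta}\text{.}
$$
Hence $|e^{2\pi i m \gamma\cdot\tau}| = e^{-2\pi m \Im(\gamma\cdot\tau)}$ is bounded by a constant depending only on $m,N,\beta$ and not on $\gamma$, even when $m<0$. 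The claimed uniform absolute convergence of $P_{m,k,N}$ then reduces to the classical uniform absolute convergence of the Eisenstein-type series $\sum_{(c,d)}|c\tau+d|^{-k}$ for $k>2$.

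Next, uniform convergence on compacta yields holomorphy of $P_{m,k,N}$ on $\mathbb{H}$, and modularity of weight $k$ under $\Gamma_0(N)$ follows formally from the construction as a sum over cosets together with the cocycle relation $j(\gamma\gamma',\tau) = j(\gamma,\gamma'\cdot\tau)j(\gamma',\tau)$. For the behaviour at cusps, one computes $P_{m,k,N}|_{g,k}$ for $g\in \SL_2(\mathbb{Z})$ by rearranging the sum according to the double cosets $\Gamma_\infty\backslash\Gamma_0(N)/g^{-1}\Gamma_\infty g$ and applying the standard Bruhat-type expansion; this yields an explicit Kloosterman--Bessel series. At the cusp at infinity the identity coset contributes the principal part $q^m$, and the remaining terms produce the expansion (0.2), which has only positive-index Fourier coefficients. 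At any cusp different from $\infty$ no term contributes to the principal part (the would-be identity term is regular at $i\infty$ since $g$ does not fix $\infty$), and the constant term vanishes by a standard partial-sum estimate, so $P_{m,k,N}$ is holomorphic and vanishes at every finite cusp. Together this gives $P_{m,k,N}\in S_k^{!,\infty}(\Gamma_0(N))$, while reality of its Fourier coefficients follows directly from the Kloosterman--Bessel formula, since both $K(m,n;c)$ and $J_{k-1}$ are real.

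In weight $k=2$ the naive sum $\sum|c\tau+d|^{-2}$ diverges, and one applies the Hecke trick by inserting the regularising factor $|j(\gamma,\tau)|^{-2\epsilon}$. The same comparison as above, with exponent $2+2\epsilon>2$, gives uniform absolute convergence of $P_{m,2,N}(\cdot,\epsilon)$ on tubular regions for every $\epsilon>0$, and the twisted transformation law is visible term by term. The only genuinely new input compared to part (1), and the expected main obstacle, is extracting a holomorphic weakly holomorphic modular form from the limit $\epsilon\to 0^+$. Following Rankin's approach, one computes the Fourier expansion of $P_{m,2,N}(\tau,\epsilon)$ at each cusp explicitly as a function of $\epsilon$ by unfolding as in the previous paragraph. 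For $m\neq 0$, each Fourier coefficient has a well-defined finite limit as $\epsilon\to 0^+$, recovering the Kloosterman--Bessel expression (0.2) with $k=2$; the potential non-holomorphic remainder of the form $c(\Im\tau)^{-1}$ that plagues the $m=0$ Eisenstein case is absent, because the zeroth Fourier coefficient vanishes. A direct estimate then shows that the limit Fourier series defines a holomorphic function on $\mathbb{H}$, and continuity transfers modularity of weight $2$, holomorphy at finite cusps, and reality from the regime $\epsilon>0$, proving $P_{m,2,N}\in S_2^{!,\infty}(\Gamma_0(N);\mathbb{R})$.
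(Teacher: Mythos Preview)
The paper does not actually prove this proposition; it simply states that the proofs are well known and refers to \cite{CS17} Chapter~8 for $k\ge 4$ and to \cite{rankin77} 5.7 for $k=2$. Your sketch follows precisely this classical approach (convergence by comparison with the Eisenstein series, modularity from the coset construction, cusp analysis via the Kloosterman--Bessel unfolding, and the Hecke trick for $k=2$) and is correct, so there is nothing to compare.

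One small quibble: in your $k=2$ argument, saying the non-holomorphic $c(\Im\tau)^{-1}$ term is absent ``because the zeroth Fourier coefficient vanishes'' is slightly circular, since that non-holomorphic piece \emph{is} part of the zeroth Fourier coefficient. What actually happens in Rankin's computation is that the relevant integral producing the constant term is $O(\epsilon)$ when $m\neq 0$, so no pole or finite non-holomorphic residue survives in the limit $\epsilon\to 0^+$; this is the point at which the hypothesis $m\neq 0$ is genuinely used.
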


Recall that, for $a,b,c \in \mathbb{Z}$, with $c\ge 1$, the Kloosterman sum $K(a,b;c)$ is the real algebraic number defined by
$$
K(a,b;c) = \sum_{x \in (\mathbb{Z}/c\mathbb{Z})^{\times}}e^{\frac{2\pi i}{m}(ax + bx^{-1})}\text{.}
$$ 

\begin{prop}\label{prop:poincare-fourier}
  Let $k\ge 2$ and $m\ge 1$ be integers, with $k$ even.
\begin{enumerate}
\item We have $P_{m,k,N} \in S_k(\Gamma_0(N);\mathbb{R})$ and, for every integer $n\ge 1$,
  $$
a_n(P_{m,k,N}) = \delta_{m,n}+  2\pi(-1)^{\frac{k}{2}}\left(\frac{n}{m} \right)^{\frac{k-1}{2}}\sum_{c\ge 1\text{, }N \mid c}\frac{K(m,n;c)}{c}J_{k-1}\left(\frac{4\pi\sqrt{mn}}{c} \right)
$$
where $J_{k-1}$ is the $J$-Bessel function of order $k-1$ (see \cite{rankin77} (5.3.2)).
\item The principal part of $P_{-m,k,N}$ at the cusp at infinity is $q^{-m}$, and, for every integer $n\ge 1$,
  $$
a_n(P_{-m,k,N}) = 2\pi(-1)^{\frac{k}{2}} \left(\frac{n}{m} \right)^{\frac{k-1}{2}}\sum_{c\ge 1\text{, }N\mid c}\frac{K(-m,n;c)}{c}I_{k-1}\left(\frac{4\pi\sqrt{mn}}{c}\right)
  $$
  where $I_{k-1}$ is the $I$-Bessel function of order $k-1$ (see \cite{rankin77} (5.3.3)).
\end{enumerate}
\end{prop}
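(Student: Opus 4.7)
The proof proceeds by the classical unfolding technique. First I would parametrize the cosets $\Gamma_\infty\backslash\Gamma_0(N)$: since $-I \in \Gamma_\infty$, every non-trivial coset admits a unique representative whose bottom row $(c,d)$ satisfies $c\ge 1$, $N\mid c$, $\gcd(c,d)=1$. Writing $d = d_0+c\ell$ with $d_0$ reduced modulo $c$ and $\ell\in\mathbb{Z}$, and using $\gamma\cdot\tau = a/c - 1/(c(c\tau+d))$ (which comes from $ad-bc=1$), one decomposes
$$P_{m,k,N}(\tau) = e^{2\pi i m \tau} + \sum_{\substack{c\ge 1 \\ N\mid c}} \frac{1}{c^k}\sum_{\substack{d_0 \bmod c \\ \gcd(c,d_0)=1}} e^{\frac{2\pi i m a}{c}}\sum_{\ell\in\mathbb{Z}}\frac{e^{-\frac{2\pi i m}{c^2(z+\ell)}}}{(z+\ell)^k}\text{,}$$
where $z = \tau + d_0/c$ and $a$ is any integer with $ad_0 \equiv 1\pmod c$. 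The identity-coset term contributes $q^m$, which accounts for $\delta_{m,n}$ in part (1) and for the principal part $q^{-m}$ in part (2); the remaining inner sum is handled by Poisson summation.

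Next I would compute, via Poisson summation, the Fourier expansion of $\sum_\ell F(z+\ell)$ in $q = e^{2\pi i \tau}$, where $F(w) = w^{-k}e^{-2\pi i m/(c^2 w)}$. The $n$th Fourier coefficient is
$$\widehat{F}(n) = \int_{\Im w = y_0}\frac{e^{-\frac{2\pi i m}{c^2 w}}}{w^k}e^{-2\pi i n w}\,dw\text{,}$$
evaluated for any $y_0 > 0$. Two equivalent approaches work: expand $e^{-2\pi i m/(c^2 w)}$ in its Laurent series in $1/w$ and use the identity $\int w^{-s}e^{-2\pi i n w}\,dw = (-2\pi i n)^{s-1}/\Gamma(s)$ for $n>0$, $\Re s>1$; or deform the contour around the essential singularity at $w=0$ and match with Schläfli's integral representation of the Bessel function. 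Either method shows that, for $m>0$ and $n\ge 1$, the integral equals an explicit constant multiple of $(n/m)^{(k-1)/2}J_{k-1}(4\pi\sqrt{mn}/c)$, while for $m<0$ and $n\ge 1$ the analogous Taylor series is that of $I_{k-1}$; and for $n\le 0$ the contour can be pushed to $\Im w\to +\infty$, showing $\widehat{F}(n) = 0$. The phase sum $\sum_{d_0}e^{2\pi i(ma+nd_0)/c}$ with $a\equiv d_0^{-1}\pmod c$ collapses to the Kloosterman sum $K(n,m;c)=K(m,n;c)$ (using the symmetry $x\mapsto x^{-1}$), and tracking constants reproduces the stated formulas.

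For part (1) I would still need to check that $P_{m,k,N}$ vanishes at every cusp, not only at $\infty$. Vanishing at $\infty$ is visible from the computed expansion: the identity coset contributes $q^m$ with $m\ge 1$, and the Bessel tail starts at $q^1$. Vanishing at a cusp $s \ne \infty$ follows by applying the same unfolding to $P_{m,k,N}|_{g,k}$ for any $g \in \SL_2(\mathbb{Z})$ with $g\cdot\infty = s$; in this expansion there is no identity-coset contribution, since the cusp $s$ is inequivalent to $\infty$ modulo $\Gamma_0(N)$, so the expansion starts at a strictly positive power of the local uniformizer. Reality of the Fourier coefficients is automatic from the invariance of $K(m,n;c)$ under $d_0\mapsto -d_0$.

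The main obstacle is the contour-integral evaluation combined with the justification of Poisson summation. For $k\ge 4$ absolute convergence is free from Proposition \ref{prop:poincare}(1), and all exchanges of summation with integration are valid. For $k=2$, where the series converges only conditionally, I would apply the entire argument to the Hecke-regularized series $P_{m,2,N}(\tau,\epsilon)$ of Proposition \ref{prop:poincare}(2), producing Fourier coefficients that extend meromorphically in $\epsilon$; since $m\ne 0$, no Eisenstein-type divergent $n=0$ term can appear, and dominated convergence in the limit $\epsilon \to 0^+$ recovers the stated formulas.
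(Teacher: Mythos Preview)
Your proposal is correct and follows the classical unfolding/Poisson-summation argument; the paper itself does not give a proof of this proposition but simply refers to \cite{CS17} Chapter 8 for $k\ge 4$ and \cite{rankin77} 5.7 for $k=2$, which is precisely the method you outline. Your sketch is thus essentially the proof contained in those references, including the Hecke-regularisation for weight $2$.
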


\section{Geometric interpretation; the $q$-expansion principle}\label{subsec:geom-interpr}

We work over the  moduli stacks $\mathcal{Y}_0(N)$ and $\mathcal{X}_0(N)$ over $\Spec \mathbb{Q}$. By definition, for every $\mathbb{Q}$-scheme $S$, the fibre of $\mathcal{Y}_0(N)$ at $S$ is given by pairs $(E, C)$, where $E$ is an elliptic curve over $S$, and $C$ is a cyclic $S$-subgroup scheme of $E$ of order $N$ (i.e., locally for the fppf topology on $S$, the subgroup $C$ admits a generator of order $N$; see \cite{KM85} 3.4). The `compactified' moduli stack $\mathcal{X}_0(N)$ has a similar definition, but now $E$ is allowed to be a generalised elliptic curve, as in \cite{DR73}.

One can prove that $\mathcal{X}_0(N)$ is a proper smooth Deligne--Mumford stack over $\Spec \mathbb{Q}$ containing $\mathcal{Y}_0(N)$ as an open substack. More precisely, it follows from \cite{DR73}, Théorème 3.4, that  $\mathcal{Y}_0(N)$ is the complement of a closed substack $\mathcal{Z}_N\subset \mathcal{X}_0(N)$, the \emph{cuspidal locus of} $\mathcal{X}_0(N)$, which is finite étale over $\Spec \mathbb{Q}$ (therefore representable by a $\mathbb{Q}$-scheme).

\begin{obs}
The moduli stacks $\mathcal{Y}_0(1)$ and $\mathcal{X}_0(1)$ are often denoted by $\mathcal{M}_{1,1}$ and $\overline{\mathcal{M}}_{1,1}$ in the literature.
\end{obs}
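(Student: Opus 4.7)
The plan is to verify that for $N=1$ the additional level datum becomes vacuous, so that $\mathcal{Y}_0(1)$ and $\mathcal{X}_0(1)$ represent exactly the moduli problems that define $\mathcal{M}_{1,1}$ and $\overline{\mathcal{M}}_{1,1}$. First I would recall the standard definitions: $\mathcal{M}_{1,1}$ is the $\mathbb{Q}$-stack whose fibre over a scheme $S$ is the groupoid of elliptic curves $E/S$, and $\overline{\mathcal{M}}_{1,1}$ is the Deligne--Mumford compactification whose fibre over $S$ is the groupoid of generalised elliptic curves over $S$ with irreducible geometric fibres, in the sense of \cite{DR73}.

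Next I would unwind the definition of a cyclic subgroup scheme of order $N$ from \cite{KM85} 3.4 in the case $N=1$. Fppf-locally on $S$, such a subgroup must admit a generator of order $1$; the only such generator is the identity section $e$, and the only subgroup scheme containing it is the trivial subgroup scheme $\{e\}\subset E$. Hence for every $\mathbb{Q}$-scheme $S$, the datum of a pair $(E,C)$ with $E$ an elliptic curve over $S$ and $C\subset E$ cyclic of order $1$ is canonically equivalent to the datum $E$ alone, giving a canonical equivalence of fibre categories $\mathcal{Y}_0(1)(S) \simeq \mathcal{M}_{1,1}(S)$.

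The same argument applies verbatim over $\mathcal{X}_0(N)$ with $E$ allowed to be a generalised elliptic curve, yielding $\mathcal{X}_0(1)(S) \simeq \overline{\mathcal{M}}_{1,1}(S)$. To upgrade these pointwise equivalences of groupoids to equivalences of stacks, one only needs to observe that the assignment $(E,\{e\}) \leftrightarrow E$ is functorial in $S$ and compatible with base change, which is immediate.

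There is essentially no obstacle; the statement is a bookkeeping consequence of the definitions. The only point that deserves a brief comment is the assertion that a cyclic subgroup scheme of order $1$ is necessarily the trivial one, which follows from the Katz--Mazur definition by noting that the group scheme $\{e\} \subset E$ is finite étale of rank $1$ and is trivially generated by $e$.
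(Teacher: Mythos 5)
Your argument is correct: a cyclic subgroup scheme of order $1$ is forced to be the identity section, so the moduli problems for $\mathcal{Y}_0(1)$ and $\mathcal{M}_{1,1}$ (and likewise $\mathcal{X}_0(1)$ and $\overline{\mathcal{M}}_{1,1}$) coincide on the nose. The paper offers no proof here, since the statement is a terminological remark rather than a mathematical claim requiring justification, but your unwinding of the Katz--Mazur definition at $N=1$ is exactly the reasoning that underlies it.
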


In the category of complex analytic spaces, let $\mathbb{E}$ be the elliptic curve over $\mathbb{H}$ whose fibre at $\tau \in \mathbb{H}$ is the complex torus
$$
\mathbb{E}_{\tau} = \mathbb{C}/(\mathbb{Z} + \tau \mathbb{Z})\text{.}
$$ 
Then the map
\begin{align}\label{eq:unif-Y_0(N)}
  \mathbb{H} \to \mathcal{Y}_0(N)_{\mathbb{C}}^{\an}\text{, } \qquad \tau \mapsto (\mathbb{E}_{\tau},(\tfrac{1}{N}\mathbb{Z} + \tau \mathbb{Z})/(\mathbb{Z} + \tau \mathbb{Z}))
\end{align}
induces an isomorphism
$$
\mathcal{Y}_0(N)_{\mathbb{C}}^{\an} \cong \Gamma_0(N)\backslash\! \backslash\mathbb{H}\text{,}
$$
where the double backslash stands for the `orbifold quotient' (or `stacky quotient') of $\mathbb{H}$ by the left action of $\Gamma_0(N)$.

 For $g\in \SL_2(\mathbb{Z})$, and $w$ the width of the cusp $p$ determined by $g$, a coordinate chart of the `compactification' $\mathcal{X}_0(N)_{\mathbb{C}}^{\an}$ of $\mathcal{Y}_0(N)_{\mathbb{C}}^{\an}$ at a neighbourhood of $p$ is induced by
 \begin{align}\label{eq:coordinate-cusp}
\tau \mapsto e^{2\pi i\frac{g^{-1}\cdot \tau}{w}}\text{.}
\end{align}
 
\begin{obs} \label{obs:level-struct}
  The moduli stacks $\mathcal{Y}_0(N)$ and $\mathcal{X}_0(N)$ are \emph{never} representable by schemes. Indeed, any subgroup of an elliptic curve $E$ is preserved by multiplication by $-1$; thus, any object of $\mathcal{Y}_0(N)$ admits non-trivial automorphisms. If $p$ is an odd prime not dividing $N$, the moduli stack $\mathcal{Y}_0(N)_p$ classifying elliptic curves with a cyclic subgroup of order $N$ \emph{and} a full level $p$ structure is representable by a smooth affine $\mathbb{Q}$-scheme $Y$ with a $\GL_2(\mathbb{F}_p)$-action (cf. \cite{DR73} Théorème 2.7). The natural map $Y\to \mathcal{Y}_0(N)$ is finite étale and induces an isomorphism $\mathcal{Y}_0(N) \cong Y/\!/\GL_2(\mathbb{F}_p)$. 
  Similar considerations also hold for the compactified modular curve $\mathcal{X}_0(N)$.
\end{obs}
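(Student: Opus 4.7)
The plan is to verify the three assertions in turn, each reducing to a standard input from the theory of \cite{DR73}.

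For the non-representability of $\mathcal{Y}_0(N)$ and $\mathcal{X}_0(N)$, I would exhibit a universal non-trivial automorphism. The inversion $[-1] : E \to E$ is a non-trivial automorphism of every elliptic curve, and since it is a group homomorphism it preserves every subgroup scheme; it therefore defines an element of order $2$ in $\Aut(E,C)$ for every object $(E,C)$ of $\mathcal{Y}_0(N)$. Since a representable stack has trivial inertia, $\mathcal{Y}_0(N)$ cannot be a scheme; the same argument for generalised elliptic curves handles $\mathcal{X}_0(N)$.

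For the representability of $\mathcal{Y}_0(N)_p$, the key input is rigidification: any automorphism $\phi$ of $(E, C, \alpha)$ must fix the level structure, hence restrict to the identity on $E[p]$. Since the natural map $\Aut(E) \to \Aut(E[p]) \cong \GL_2(\mathbb{F}_p)$ is injective for odd $p$ (this is the classical criterion underlying the representability of $Y(N)$ for $N \ge 3$, cf.\ \cite{KM85}), the stack $\mathcal{Y}_0(N)_p$ has trivial inertia, and \cite{DR73} Théorème 2.7 identifies it with a smooth affine $\mathbb{Q}$-scheme $Y$.

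For the quotient description, the right action of $\GL_2(\mathbb{F}_p)$ on $(\mathbb{Z}/p\mathbb{Z})^2$ induces an algebraic action on level structures by $g \cdot \alpha = \alpha \circ g$, hence on $Y$, commuting with the forgetful map $\pi : Y \to \mathcal{Y}_0(N)$. Since $E[p]$ is étale-locally trivial as a finite étale group scheme over $\mathbb{Q}$-bases, $\pi$ is a $\GL_2(\mathbb{F}_p)$-torsor; in particular it is finite étale, and descent yields the equivalence $\mathcal{Y}_0(N) \cong Y/\!/\GL_2(\mathbb{F}_p)$. The same argument applied to generalised elliptic curves and the corresponding smooth proper compactification of $Y$ gives the analogous statement for $\mathcal{X}_0(N)$.

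The main obstacle is the rigidification step, which is what forces the oddness of $p$: the map $\Aut(E) \to \GL_2(\mathbb{F}_2)$ fails to be injective precisely at the elliptic curves with extra automorphisms ($j=0$ and $j=1728$), whereas for $p\ge 3$ the full level-$p$ structure kills all automorphisms of $(E,C)$. Once this is in place, all remaining ingredients (triviality of $E[p]$ étale-locally, the torsor structure, descent) are standard manipulations of finite étale group schemes and quotient stacks.
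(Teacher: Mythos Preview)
The statement in the paper is a \emph{Remark}, not a theorem, and the paper offers no proof beyond the inline justification already contained in the remark itself (the sentence beginning ``Indeed, any subgroup\ldots'') together with the citation to \cite{DR73} Th\'eor\`eme~2.7. Your proposal is therefore not competing with any argument of the paper; it is a correct and standard expansion of exactly the same reasoning the remark sketches.

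One small inaccuracy in your final paragraph: the map $\Aut(E)\to\GL_2(\mathbb{F}_2)$ fails to be injective for \emph{every} elliptic curve, not only at $j=0$ and $j=1728$, since $[-1]$ already acts as the identity on $E[2]$. This does not affect your argument, which only needs the injectivity for odd $p$, but the parenthetical explanation of where the failure occurs should be corrected.
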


Let $\mathcal{F}$ be the \emph{Hodge line bundle} over $\mathcal{Y}_0(N)$. By definition, if $\pi: \mathcal{E} \to \mathcal{Y}_0(N)$ denotes the universal elliptic curve over $\mathcal{Y}_0(N)$, then $\mathcal{F} \defeq \pi_* \Omega^{1}_{\mathcal{E}/\mathcal{Y}_0(N)}$. Concretely, for a $k$-point $y:\Spec k \to \mathcal{Y}_0(N)$ corresponding to a pair $(E,C)$ over a field $k$, the fibre of $\mathcal{F}$ at $y$ is given by $\mathcal{F}(y) = H^0(E,\Omega^1_{E/k})$. The Hodge line bundle can be extended to a line bundle $\overline{\mathcal{F}}$ over $\mathcal{X}_0(N)$ by setting $\overline{\mathcal{F}}\defeq \overline{\pi}_* \Omega^{1}_{\overline{\mathcal{E}}/\mathcal{X}_0(N)}(\log \overline{\pi}^{-1}(\mathcal{Z}_N))$, where $\overline{\pi}: \overline{\mathcal{E}} \to \mathcal{X}_0(N)$ denotes the universal generalised elliptic curve over $\mathcal{X}_0(N)$.

\begin{ex}\label{eq:action-gamma-omega}
  The pullback of $\mathcal{F}^{\an}$ to $\mathbb{H}$ via the uniformisation map (\ref{eq:unif-Y_0(N)}) is trivialised by the global section $\omega \in H^0(\mathbb{H}, \mathcal{F}^{\an})$ whose fibre at $\tau \in \mathbb{H}$ is the holomorphic 1-form
  $$
\omega_{\tau} \defeq 2\pi i\, dz
$$
on the complex torus $\mathbb{E}_{\tau} = \mathbb{C}/(\mathbb{Z} + \tau \mathbb{Z})$. Note that every $\gamma \in \Gamma_0(N)$ defines an automorphism of $\mathbb{E}$ by
$$
\gamma_\tau : \mathbb{E}_{\tau} \to \mathbb{E}_{\gamma\cdot \tau}\text{, }\qquad z \mapsto j(\gamma,\tau)^{-1}z\text{,}
$$
so that it acts on $\mathcal{F}^{\an}$ by pullback. Explicitly, $\gamma^*\omega = j(\gamma,\tau)^{-1}\omega$.
\end{ex}

Let $r$ be an integer. If $f \in M^!_{r}(\Gamma_0(N))$, then it follows from the above example that the section $f\omega^{\tensor r} \in H^0(\mathbb{H}, (\mathcal{F}^{\an})^{\tensor r})$ is $\Gamma_0(N)$-invariant, so that it descends to a global section $\omega_f$ in $ H^0(\mathcal{Y}_0(N)^{\an}_{\mathbb{C}}, (\mathcal{F}^{\an})^{\tensor r})$.

\begin{prop}[$q$-expansion principle; cf. \cite{katz73} 1.6]\label{prop:q-expansion-principle}
  The section $\omega_f$ constructed above is algebraic, i.e., $\omega_f \in H^0(\mathcal{Y}_0(N)_{\mathbb{C}},\mathcal{F}^{\tensor r}) = H^0(\mathcal{Y}_0(N),\mathcal{F}^{\tensor r})\tensor_{\mathbb{Q}}\mathbb{C}$. Moreover, $f \in M^!_r(\Gamma_0(N);\mathbb{Q})$ if and only if $\omega_f \in H^0(\mathcal{Y}_0(N), \mathcal{F}^{\tensor r})$.
\end{prop}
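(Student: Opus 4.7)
The plan is to break the statement into two parts: algebraicity of $\omega_f$ and the identification of its field of definition.

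\textbf{Step 1: Algebraicity.} The definition of weakly holomorphic modular form requires $f$ to be meromorphic at every cusp; by examining the local coordinate (\ref{eq:coordinate-cusp}) together with the analytic trivialisation of $\mathcal{F}^{\an}$ from Example \ref{eq:action-gamma-omega}, I would check that $\omega_f$ extends to a meromorphic section of $\overline{\mathcal{F}}^{\tensor r}$ on $\mathcal{X}_0(N)^{\an}_{\mathbb{C}}$ with poles only along the cuspidal locus $\mathcal{Z}_N^{\an}$. Equivalently, for some $n$ large enough, $\omega_f$ is a holomorphic section of $\overline{\mathcal{F}}^{\tensor r}(n\mathcal{Z}_N)$. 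Now $\mathcal{X}_0(N)$ is a proper smooth Deligne--Mumford stack over $\mathbb{Q}$, so GAGA applies: I would reduce to the scheme case using the finite étale cover $X \to \mathcal{X}_0(N)_p$ by a projective curve $X$ introduced in Remark \ref{obs:level-struct}, pull back the section, apply the classical GAGA theorem on $X_{\mathbb{C}}$, and descend back via Galois descent along the $\GL_2(\mathbb{F}_p)$-cover to conclude $\omega_f \in H^0(\mathcal{X}_0(N)_{\mathbb{C}}, \overline{\mathcal{F}}^{\tensor r}(n\mathcal{Z}_N))\subset H^0(\mathcal{Y}_0(N)_{\mathbb{C}},\mathcal{F}^{\tensor r})$.

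\textbf{Step 2: The Tate curve and the trivialisation at $\infty$.} The key algebro-geometric ingredient is the Tate curve $\mathrm{Tate}(q)$ over $\mathbb{Z}((q))$, which provides a $\mathbb{Z}[[q]]$-point of $\mathcal{X}_0(N)$ whose image is the cusp at infinity (with its canonical $\Gamma_0(N)$-structure given by the subgroup $\mu_N$). On $\mathrm{Tate}(q)$ there is a canonical invariant differential $\omega_{\mathrm{can}} = \frac{dt}{t}$, giving a $\mathbb{Q}$-rational trivialisation of $\overline{\mathcal{F}}$ in the formal neighbourhood of $\infty$. Under the analytic uniformisation $\mathbb{H} \to \mathcal{Y}_0(N)^{\an}_{\mathbb{C}}$, the exponential $z\mapsto t=e^{2\pi i z}$ identifies $\mathbb{E}_\tau$ with the analytic Tate curve and sends $2\pi i\, dz$ to $\frac{dt}{t}$; this matches the analytic trivialisation $\omega$ of Example \ref{eq:action-gamma-omega} with $\omega_{\mathrm{can}}$. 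Consequently, the formal expansion of the algebraic section $\omega_f$ in the formal neighbourhood of $\infty$, written as a power series times $\omega_{\mathrm{can}}^{\tensor r}$, is exactly the classical Fourier series $\sum_n a_n(f) q^n$.

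\textbf{Step 3: Descent of rationality.} The forward implication is then immediate: if $\omega_f \in H^0(\mathcal{X}_0(N),\overline{\mathcal{F}}^{\tensor r}(n\mathcal{Z}_N))$, its pullback along the $\mathbb{Q}$-point at $\infty$ lies in $\mathbb{Q}((q))\cdot \omega_{\mathrm{can}}^{\tensor r}$, so $a_n(f) \in \mathbb{Q}$. For the converse, I would use injectivity of the $q$-expansion map
\[
H^0(\mathcal{X}_0(N)_{\mathbb{C}},\overline{\mathcal{F}}^{\tensor r}(n\mathcal{Z}_N)) \longhookrightarrow \mathbb{C}((q)),
\]
which holds because $\mathcal{X}_0(N)$ is geometrically connected and $\infty$ is a cusp. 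The space on the left is the complexification of its $\mathbb{Q}$-form $V_n \defeq H^0(\mathcal{X}_0(N),\overline{\mathcal{F}}^{\tensor r}(n\mathcal{Z}_N))$, so the $q$-expansion map restricts to an injection $V_n \hookrightarrow \mathbb{Q}((q))$. If $\omega_f$ has rational $q$-expansion, then picking a $\mathbb{Q}$-basis of $V_n$ and comparing its $q$-expansions shows that $\omega_f$ lies in $V_n$, as required.

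\textbf{Main obstacle.} The non-routine part is Step 2: carefully identifying the analytic trivialisation $2\pi i\, dz$ with the algebraic trivialisation $dt/t$ of the Tate curve, and verifying that the formal expansion at the cusp coincides with the classical Fourier series (normalisations and the factor $2\pi i$ must be tracked). The GAGA reduction in Step 1 is standard once one is comfortable with the passage from the stack to the rigidified cover, and the injectivity in Step 3 is a consequence of geometric connectedness; the arithmetic content is entirely concentrated in the Tate curve comparison.
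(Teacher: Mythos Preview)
Your proof is correct and follows essentially the same route as the paper's own sketch: pass to the rigidified cover $X \to \mathcal{X}_0(N)$ of Remark \ref{obs:level-struct}, apply Serre's GAGA on the projective curve $X_{\mathbb{C}}$ to the meromorphic extension of $\omega_f$, descend along $\GL_2(\mathbb{F}_p)$-invariants, and then use the Tate curve to identify the algebraic $q$-expansion with the analytic Fourier series. The only cosmetic difference is that you phrase the meromorphic extension as a holomorphic section of $\overline{\mathcal{F}}^{\tensor r}(n\mathcal{Z}_N)$ for $n\gg 0$, and you spell out the linear-algebra step (injectivity of the $q$-expansion map on a finite-dimensional space with a $\mathbb{Q}$-form) that the paper leaves implicit; your identification of the Tate-curve comparison as the crux is exactly right.
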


\begin{proof}[Sketch of the proof]
    Let $p$ be an odd prime not dividing $N$, so that $\mathcal{X}_0(N)_p$ (resp. $\mathcal{Y}_0(N)_p$) is representable by a $\mathbb{Q}$-scheme $X$ (resp. $Y$) (see Remark \ref{obs:level-struct}).  By the growth condition of $f$ at the cusps, $\omega_f$ extends to a \emph{meromorphic} global section of $(\overline{\mathcal{F}}^{\an})^{\tensor r}$ over $X_{\mathbb{C}}^{\an}$. Since $\overline{\mathcal{F}}$ is an algebraic coherent sheaf, and since $X_{\mathbb{C}}$ is projective, Serre's GAGA implies that the pullback of $\omega_f$ to $X_{\mathbb{C}}^{\an}$ is given by a \emph{rational} global section of $\overline{\mathcal{F}}^{\tensor r}$ over $X_{\mathbb{C}}$; since it is regular over $Y_{\mathbb{C}}$, it belongs to $H^0(Y_{\mathbb{C}}, \mathcal{F}^{\tensor r})$. By considering $\GL_{2}(\mathbb{F}_p)$-invariants, we conclude that $\omega_f \in H^0(\mathcal{Y}_0(N)_{\mathbb{C}},\mathcal{F}^{\tensor r})$.

  Note that the uniformisation map $\mathbb{H} \to \mathcal{Y}_0(N)_{\mathbb{C}}^{\an}$ (\ref{eq:unif-Y_0(N)}) factors through a map $\mathbb{D}^* \to  \mathcal{Y}_0(N)_{\mathbb{C}}^{\an}$ via $\mathbb{H} \to \mathbb{D}^*$, $\tau \mapsto q(\tau)=e^{2\pi i \tau}$, where $\mathbb{D}^*$ denotes the punctured complex unit disc. The section $\omega$ over $\mathbb{H}$ also descends to $\mathbb{D}^*$, so that the pullback of $\omega_f$ to $\mathbb{D}^*$ is simply $f(q)\omega^{\tensor r}$, where $f(q)$ denotes the Fourier expansion of $f$ at infinity. We now remark that $\mathbb{D}^* \to  \mathcal{Y}_0(N)_{\mathbb{C}}^{\an}$ `descends' to an algebraic morphism $\Spec \mathbb{Q}(\!(q)\!) \to \mathcal{Y}_0(N)$, given by the \emph{Tate curve} (see \cite{DR73} VII.4). From this, we conclude that $\omega_f \in H^0(\mathcal{Y}_0(N), \mathcal{F}^{\tensor r})$ if and only if $f(q) \in \mathbb{Q}(\!(q)\!)$. 
\end{proof}

\begin{coro}\label{coro:field-definition}
  For every integer $r$ and every subfield $K$ of $\mathbb{C}$, the map
  $$
M_{r}^!(\Gamma_0(N);K) \to H^0(\mathcal{Y}_0(N)_K,\mathcal{F}^{\tensor r})\text{, }\qquad f \mapsto \omega_f 
$$
is an isomorphism of $K$-vector spaces. In particular, $M_r^!(\Gamma_0(N);K) = M_r^!(\Gamma_0(N);\mathbb{Q})\tensor_{\mathbb{Q}} K$. \hfill $\square$
\end{coro}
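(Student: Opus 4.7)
The strategy is to derive the corollary from Proposition \ref{prop:q-expansion-principle} in two stages: first establish the case $K=\mathbb{Q}$ directly, then extend to arbitrary $K$ by flat base change.

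For the $\mathbb{Q}$-case, injectivity of $f \mapsto \omega_f$ is immediate, since the pullback along (\ref{eq:unif-Y_0(N)}) is $f\omega^{\tensor r}$ and $\omega$ trivialises $\mathcal{F}^{\an}$ on $\mathbb{H}$. For surjectivity, I would take $s \in H^0(\mathcal{Y}_0(N), \mathcal{F}^{\tensor r})$, pull it back analytically to $\mathbb{H}$, and write it uniquely as $f\omega^{\tensor r}$ for a holomorphic $f: \mathbb{H}\to\mathbb{C}$. Example \ref{eq:action-gamma-omega} converts the $\Gamma_0(N)$-invariance of the pullback into the modular transformation law $f|_{\gamma, r} = f$; the extension $\overline{\mathcal{F}}$ of $\mathcal{F}$ to the proper stack $\mathcal{X}_0(N)$, together with the fact that $\mathcal{Z}_N$ is a Cartier divisor, ensures that $s$ extends as a meromorphic section of $\overline{\mathcal{F}}^{\tensor r}$, and the local coordinate (\ref{eq:coordinate-cusp}) at any cusp translates this into the growth estimate $f|_{g,r} = O(e^{\rho \Im \tau})$. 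Hence $f \in M_r^!(\Gamma_0(N))$, and Proposition \ref{prop:q-expansion-principle} (via the Tate curve) forces $a_n(f) \in \mathbb{Q}$ for every $n$.

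For general $K \subset \mathbb{C}$, I would invoke flat base change for the coherent sheaf $\mathcal{F}^{\tensor r}$ on the finite-type Deligne--Mumford $\mathbb{Q}$-stack $\mathcal{Y}_0(N)$, giving a canonical isomorphism
$$
H^0(\mathcal{Y}_0(N)_K, \mathcal{F}^{\tensor r}) \cong H^0(\mathcal{Y}_0(N), \mathcal{F}^{\tensor r}) \tensor_{\mathbb{Q}} K\text{.}
$$
On the stack, this reduces to the classical scheme-theoretic statement by passing to the $\GL_2(\mathbb{F}_p)$-cover $Y \to \mathcal{Y}_0(N)$ of Remark \ref{obs:level-struct} (for any fixed odd prime $p\nmid N$), since forming $\GL_2(\mathbb{F}_p)$-invariants commutes with $-\tensor_{\mathbb{Q}} K$ in characteristic zero.

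Combining the two steps yields a chain
$$
M_r^!(\Gamma_0(N);\mathbb{Q}) \tensor_{\mathbb{Q}} K \longrightarrow M_r^!(\Gamma_0(N); K) \longrightarrow H^0(\mathcal{Y}_0(N)_K, \mathcal{F}^{\tensor r})
$$
whose composition is the $K$-linear extension of the $\mathbb{Q}$-case isomorphism, hence a bijection. The second map is injective by the same $\mathbb{H}$-trivialisation argument, and the first is injective because any $\mathbb{Q}$-basis of $M_r^!(\Gamma_0(N);\mathbb{Q})$ remains $\mathbb{C}$-linearly independent (tested on finitely many Fourier coefficients). A bijective composite of two injections forces both arrows to be isomorphisms, which gives the corollary. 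I do not foresee a substantive obstacle; the only subtlety is the flat base change on the stack, which the auxiliary cover of Remark \ref{obs:level-struct} reduces to a standard scheme-theoretic fact.
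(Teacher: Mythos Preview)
Your proposal is correct and is essentially a detailed version of the argument the paper leaves implicit: the corollary is marked with a $\square$ and no proof, indicating it is meant to follow directly from Proposition~\ref{prop:q-expansion-principle} together with flat base change $H^0(\mathcal{Y}_0(N)_K,\mathcal{F}^{\tensor r})\cong H^0(\mathcal{Y}_0(N),\mathcal{F}^{\tensor r})\tensor_{\mathbb{Q}}K$. Your two-step outline (the $\mathbb{Q}$-case via the $q$-expansion principle, then extension by base change through the affine cover of Remark~\ref{obs:level-struct}) is exactly the intended unpacking.
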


It follows from the explicit description of the coordinates at the cusps of $\mathcal{X}_0(N)_{\mathbb{C}}^{\an}$ given in (\ref{eq:coordinate-cusp}) that, if $f \in M_r(\Gamma_0(N))$, then $\omega_f$ extends holomorphically to every cusp. Thus,
$$
M_r(\Gamma_0(N);K) \cong H^0(\mathcal{X}_0(N)_K,\overline{\mathcal{F}}^{\tensor r})\text{,}
$$
for every subfield $K\subset \mathbb{C}$.  Similarly, $M_r^{!,\infty}(\Gamma_0(N))$ corresponds to those elements of $H^0(\mathcal{Y}_0(N)_{\mathbb{C}},\mathcal{F}^{\tensor r})$ which extend holomorphically at every cusp different from $\infty$, and the space $S_r^{!}(\Gamma_0(N))$ is given by the elements of $H^0(\mathcal{Y}_0(N)_{\mathbb{C}},\mathcal{F}^{\tensor r})$ with trivial residue at every cusp.

The following technical lemma will be helpful throughout this paper.

\begin{lemma}\label{lemma:principal-part}
Let $k\ge 0$ be an even integer, and $f \in M_{-k}^{!,\infty}(\Gamma_0(N))$. If $K$ denotes the subfield of $\mathbb{C}$ generated by $a_n(f)$ for $n\le 0$, then $f \in M_{-k}^{!,\infty}(\Gamma_0(N);K)$.
\end{lemma}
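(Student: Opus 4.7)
The plan is to reduce the statement to the injectivity of the principal-part-at-infinity map, together with the existence of a natural $\mathbb{Q}$-structure on $M_{-k}^{!,\infty}(\Gamma_0(N))$; a routine linear-algebra argument then yields the conclusion.

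I would first show that the $\mathbb{C}$-linear map
$$
\mathcal{P} \colon M_{-k}^{!,\infty}(\Gamma_0(N)) \to \mathbb{C}[q^{-1}]\text{, }\qquad f \mapsto \mathcal{P}_f
$$
is injective. If $\mathcal{P}_f=0$, then $f$ vanishes at the cusp $\infty$ and is, by definition of $M^{!,\infty}$, holomorphic at every other cusp; hence $f \in M_{-k}(\Gamma_0(N))$ with $a_0(f)=0$. Remark \ref{obs:negative-weight} (the valence formula) immediately gives $f=0$ when $k>0$. When $k=0$, $f$ is a global holomorphic function on the proper connected stack $\mathcal{X}_0(N)$, hence a constant, which vanishes since $a_0(f)=0$.

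Next, I would observe that $M_{-k}^{!,\infty}(\Gamma_0(N))$ carries a canonical $\mathbb{Q}$-structure by the $q$-expansion principle. Indeed, the cusp $\infty$ and the cuspidal locus $\mathcal{Z}_N$ are both defined over $\mathbb{Q}$, so the condition that $\omega_f$ extend holomorphically to the $\mathbb{Q}$-open substack $\mathcal{X}_0(N)\setminus \{\infty\}$ is $\mathbb{Q}$-rational. Together with Corollary \ref{coro:field-definition}, this yields
$$
M_{-k}^{!,\infty}(\Gamma_0(N);K) = M_{-k}^{!,\infty}(\Gamma_0(N);\mathbb{Q})\tensor_{\mathbb{Q}} K
$$
for every subfield $K\subset \mathbb{C}$.

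I would then fix a $\mathbb{Q}$-basis $f_1,\ldots,f_d$ of $M_{-k}^{!,\infty}(\Gamma_0(N);\mathbb{Q})$. By the injectivity of $\mathcal{P}$, the principal parts $\mathcal{P}_{f_1},\ldots,\mathcal{P}_{f_d}$ are $\mathbb{Q}$-linearly independent elements of $\mathbb{Q}[q^{-1}]$, so one can pick indices $n_1,\ldots,n_d \le 0$ for which $A=(a_{n_j}(f_i))_{1\le i,j\le d}$ lies in $\GL_d(\mathbb{Q})$. Writing the given $f$ as $\sum_i c_i f_i$ with $c_i \in \mathbb{C}$, the relations $a_{n_j}(f) = \sum_i c_i\, a_{n_j}(f_i)$ form a $\mathbb{Q}$-rational invertible linear system whose right-hand side lies in $K$ by hypothesis; inverting over $\mathbb{Q}$ yields $c_i \in K$ for all $i$, and hence $f \in M_{-k}^{!,\infty}(\Gamma_0(N);K)$. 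The only non-routine step is the injectivity of $\mathcal{P}$, which hinges on the valence formula in negative weight; all the rest is standard linear algebra over the $\mathbb{Q}$-structure secured by the $q$-expansion principle.
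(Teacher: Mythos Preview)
Your approach is essentially the same as the paper's: both prove injectivity of the principal-part map via the valence formula, invoke the $\mathbb{Q}$-structure coming from the $q$-expansion principle, and conclude by a linear-algebra argument. The paper phrases the last step more abstractly (an injective $K$-linear map remains injective after base change, so anything landing in $K[q^{-1}]$ must come from the $K$-form), whereas you spell out the matrix inversion explicitly.

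One slip to fix: the space $M_{-k}^{!,\infty}(\Gamma_0(N);\mathbb{Q})$ is \emph{infinite}-dimensional (the pole order at $\infty$ is unbounded), so you cannot choose a finite $\mathbb{Q}$-basis $f_1,\ldots,f_d$ of the whole space. The remedy is immediate: either restrict to the finite-dimensional subspace of forms with pole order at $\infty$ bounded by that of your given $f$, or simply write $f$ as a finite $\mathbb{C}$-linear combination of elements of a (possibly infinite) $\mathbb{Q}$-basis and run your argument on those finitely many terms. With this adjustment, your proof is correct.
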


In other words, for any weakly holomorphic modular form of negative or zero weight, its field of definition is determined by its principal part.

\begin{proof}
  Since there are no nonzero modular forms of negative weight, and the only modular forms in weight 0 are constant (Remark \ref{obs:negative-weight})\footnote{This also follows the above geometric description of $M_r(\Gamma_0(N);K)$, since the Hodge line bundle $\overline{\mathcal{F}}$ is ample over a finite étale cover $X$ of $\mathcal{X}_0(N)$ (\cite{DR73} VII.3.4).}, the $\mathbb{C}$-linear map $M_{-k}^{!,\infty}(\Gamma_0(N)) \to \mathbb{C}[q^{-1}]$, $f\mapsto \mathcal{P}_f$, given by the principal part at the cusp at infinity is injective.  It follows from Corollary \ref{coro:field-definition} that, for any subfield $K$ of $\mathbb{C}$, this injection is the $\mathbb{C}$-linear extension of the analogous $K$-linear injection  $M_{-k}^{!,\infty}(\Gamma_0(N);K) \to K[q^{-1}]$. Thus, if $f \in M_{-k}^{!,\infty}(\Gamma_0(N))$ is such that $\mathcal{P}_f \in K[q^{-1}]\subset \mathbb{C}[q^{-1}]$, then $f$ itself belongs to $M_{-k}^{!,\infty}(\Gamma_0(N);K) \subset M_{-k}^{!,\infty}(\Gamma_0(N))$.
\end{proof}

\section{De Rham cohomology with coefficients on modular curves} \label{sec:derham}

Let $(\mathcal{V},\nabla)$ be the \emph{de Rham bundle} over $\mathcal{Y}_0(N)$, with its Gauss--Manin connection. Formally, $\mathcal{V} \defeq \mathbb{R}^1\pi_* \Omega^{\bullet}_{\mathcal{E}/\mathcal{Y}_0(N)}$, where $\pi:\mathcal{E} \to \mathcal{Y}_0(N)$ denotes the universal elliptic curve over $\mathcal{Y}_0(N)$. If $y: \Spec k \to \mathcal{Y}_0(N)$ is a $k$-point corresponding to a pair $(E,C)$ over $k$, then the fibre of $\mathcal{V}$ at $y$ is given by the algebraic de Rham cohomology $\mathcal{V}(y) = H^1_{\dR}(E/k)$.

One can show (cf. \cite{katz73} A1.2 - A1.3) that $\mathcal{V}$ is a rank 2 vector bundle over $\mathcal{Y}_0(N)$ containing the Hodge line bundle $\mathcal{F}$ as a subbundle, and equipped with a symplectic $\mathcal{O}_{\mathcal{Y}_0(N)}$-bilinear pairing
$$
\langle \ , \ \rangle : \mathcal{V} \times\mathcal{V} \to \mathcal{O}_{\mathcal{Y}_0(N)}
$$
induced by the canonical principal polarisation of $\mathcal{E}$, satisfying $d\langle v,w\rangle = \langle \nabla v, w \rangle + \langle v, \nabla w \rangle$. For every integer $k\ge 0$, we set
$$
\mathcal{V}_k \defeq {\Sym}^k\mathcal{V}\text{,}
$$
and we denote by
$$
\langle \ , \ \rangle_k: \mathcal{V}_k\times \mathcal{V}_k \to \mathcal{O}_{\mathcal{Y}_0(N)}\text{, }\qquad (\alpha_1\cdots \alpha_k, \beta_1 \cdots \beta_k)\mapsto \frac{1}{k!}\sum_{\sigma \in \mathfrak{S}_k}\prod_{j=1}^k\langle \alpha_j,\beta_{\sigma(j)}\rangle
$$
the induced symplectic pairing on $\mathcal{V}_k$. By abuse, we continue to denote by $\nabla$ the induced connection on $\mathcal{V}_k$.

Consider the $\mathbb{Q}$-vector space
$$
H^1_{\dR}(\mathcal{Y}_0(N),\mathcal{V}_k) \defeq \mathbb{H}^1(\mathcal{Y}_0(N),\mathcal{V}_k\tensor \Omega^{\bullet}_{\mathcal{Y}_0(N)/\mathbb{Q}})\text{,}
$$
where $\mathcal{V}_k\tensor \Omega^{\bullet}_{\mathcal{Y}_0(N)/\mathbb{Q}}$ is the complex of $\mathcal{O}_{\mathcal{Y}_0(N)}$-modules concentrated in degrees 0 and 1
$$
[\mathcal{V}_k \stackrel{\nabla}{\to} \mathcal{V}_k \tensor \Omega^1_{\mathcal{Y}_0(N)/\mathbb{Q}}]\text{.}
$$
Since $\mathcal{Y}_0(N)$ admits an étale cover by a smooth \emph{affine} scheme (Remark \ref{obs:level-struct}), we can identify
$$
H^1_{\dR}(\mathcal{Y}_0(N),\mathcal{V}_k) = \text{coker} (H^0(\mathcal{Y}_0(N),\mathcal{V}_k) \stackrel{\nabla}{\to}H^0(\mathcal{Y}_0(N),\mathcal{V}_k\tensor \Omega^1_{\mathcal{Y}_0(N)/\mathbb{Q}}))\text{;}
$$
in particular, we have a natural surjection
\begin{align}\label{eq:cohom-quotient}
H^0(\mathcal{Y}_0(N),\mathcal{V}_k\tensor \Omega^1_{\mathcal{Y}_0(N)/\mathbb{Q}}) \longtwoheadrightarrow H^1_{\dR}(\mathcal{Y}_0(N),\mathcal{V}_k)\text{.}
\end{align}

Recall that there is a \emph{Kodaira--Spencer isomorphism}
$$
\mathcal{F}^{\tensor 2} \stackrel{\sim}{\to} \Omega^1_{\mathcal{Y}_0(N)/\mathbb{Q}}\text{, } \qquad \omega_1\tensor \omega_2 \mapsto \langle \omega_1,\nabla \omega_2 \rangle\text{,}
$$
whose pullback to $\mathbb{H}$ identifies $\omega^{\tensor 2}$ with $2\pi i\, d\tau = d \log q$ (\cite{katz73} A1.3.17). Thus,
\begin{align}\label{eq:identification-ks}
M_{k+2}^!(\Gamma_0(N);\mathbb{Q}) \cong H^0(\mathcal{Y}_0(N),\mathcal{F}^{\tensor k +2}) \cong H^0(\mathcal{Y}_0(N),\mathcal{F}^{\tensor k }\tensor \Omega^1_{\mathcal{Y}_0(N)/\mathbb{Q}})\text{.}
\end{align}
Note that the pullback of $\omega_f$ to the Poincaré half-plane $\mathbb{H}$  via (\ref{eq:unif-Y_0(N)}) gets identified with $f \omega^{\tensor k} \tensor 2\pi i\, d\tau$. Given the natural inclusion of $\mathcal{F}^{\tensor k}$ into $\mathcal{V}_k$, we obtain from (\ref{eq:cohom-quotient}) and (\ref{eq:identification-ks}) a map
$$
M_{k+2}^!(\Gamma_0(N);\mathbb{Q}) \to H^1_{\dR}(\mathcal{Y}_0(N),\mathcal{V}_k)\text{, }\qquad f\mapsto [\omega_f]=:[f]\text{.}
$$

Recall that we have the holomorphic derivation $D = \frac{1}{2\pi i }\frac{d}{d\tau}$ on $\mathbb{H}$. A classical identity of Bol implies that $D^{k+1}f \in M^!_{k+2}(\Gamma_0(N))$ for every $f \in M_{-k}^!(\Gamma_0(N))$ (cf. \cite{BH18} Proposition 2.2).

\begin{theorem}[cf. \cite{scholl85}, \cite{coleman96}, \cite{candelori14}, \cite{KS16}, \cite{BH18}]\label{thm:derham-cohom}
  The sequence of $\mathbb{Q}$-vector spaces
  \begin{align*}
    0 \to M_{-k}^!(\Gamma_0(N);\mathbb{Q}) \stackrel{D^{k+1}}{\to} M^!_{k+2}(\Gamma_0(N);\mathbb{Q}) &\to H^1_{\dR}(\mathcal{Y}_0(N),\mathcal{V}_k) \to 0\\
    f & \mapsto [f]
  \end{align*}
  is exact.
\end{theorem}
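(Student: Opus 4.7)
The plan is to verify the three exactness claims in order (injectivity, surjectivity, and middle exactness). The key geometric tool is the Hodge filtration $F^{\bullet}$ on $\mathcal{V}_k$, which satisfies $F^0=\mathcal{V}_k$, $F^k=\mathcal{F}^{\tensor k}$, $F^{k+1}=0$, with graded pieces $F^i/F^{i+1}\cong \mathcal{F}^{\tensor 2i-k}$. By Griffiths transversality, $\nabla(F^{i+1})\subset F^i\tensor \Omega^1$, and combined with the Kodaira--Spencer isomorphism $\mathcal{F}^{\tensor 2}\cong \Omega^1$, the induced maps on graded pieces $F^{i+1}/F^{i+2}\to(F^i/F^{i+1})\tensor\Omega^1$ become nonzero scalar multiples of the identity on $\mathcal{F}^{\tensor 2i-k+2}$, hence isomorphisms.

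\emph{Injectivity of $D^{k+1}$.} Any holomorphic $f:\mathbb{H}\to\mathbb{C}$ annihilated by $D^{k+1}$ is a polynomial in $\tau$ of degree at most $k$. Invariance under $T=\bigl(\begin{smallmatrix}1&1\\0&1\end{smallmatrix}\bigr)\in\Gamma_0(N)$ forces $f$ to be constant, and the weight $-k$ transformation law under $\Gamma_0(N)$ then forces $f=0$ for $k\ge 1$ (the case $k=0$ requires treating the sequence modulo constants).

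\emph{Surjectivity onto $H^1_{\dR}$.} Since $\mathcal{Y}_0(N)$ has a smooth affine étale cover (Remark 4.3), one has $H^1_{\dR}(\mathcal{Y}_0(N),\mathcal{V}_k)=\operatorname{coker}\bigl(\nabla:H^0(\mathcal{V}_k)\to H^0(\mathcal{V}_k\tensor\Omega^1)\bigr)$. Given $\sigma\in H^0(\mathcal{V}_k\tensor\Omega^1)$, the graded isomorphism above lets us inductively subtract $\nabla$-images: at each step, the obstruction to pushing $\sigma$ into $F^{i+1}\tensor\Omega^1$ lies in $(F^i/F^{i+1})\tensor\Omega^1$ and, via the graded isomorphism, can be realized as $\nabla(\nu)$ for some lift $\nu$. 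Iterating from $i=0$ to $i=k-1$ brings $\sigma$ modulo $\operatorname{im}\nabla$ into $F^k\tensor\Omega^1=\mathcal{F}^{\tensor k}\tensor\Omega^1\cong \mathcal{F}^{\tensor k+2}$, which by Corollary 4.4 corresponds to an element of $M_{k+2}^!(\Gamma_0(N);\mathbb{Q})$. The reduction is defined over $\mathbb{Q}$ because all ingredients (the Hodge filtration, Griffiths transversality, Kodaira--Spencer) are $\mathbb{Q}$-rational.

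\emph{Middle exactness.} The link between $D^{k+1}$ and $\nabla$ is Bol's identity. Concretely, for $f\in M_{-k}^!(\Gamma_0(N))$, identify $\omega_f$ with a section of $\mathcal{F}^{\tensor -k}\cong\mathcal{V}_k/F^1$ and use the graded isomorphism iteratively to produce a lift $\nu_f\in H^0(\mathcal{Y}_0(N)_\mathbb{C},\mathcal{V}_k)$ with $\nabla\nu_f=c\cdot \omega_{D^{k+1}f}$ for a nonzero rational constant $c$ (the existence of a global lift uses the vanishing of higher cohomology in the affine étale setting); hence $[\omega_{D^{k+1}f}]=0$ in $H^1_{\dR}$. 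Conversely, if $g\in M_{k+2}^!(\Gamma_0(N);\mathbb{Q})$ satisfies $\omega_g=\nabla\nu$ for some $\nu\in H^0(\mathcal{Y}_0(N)_\mathbb{C},\mathcal{V}_k)$, then the image of $\nu$ in $\mathcal{V}_k/F^1\cong\mathcal{F}^{\tensor -k}$ corresponds to an $f\in M_{-k}^!(\Gamma_0(N);\mathbb{C})$ with $D^{k+1}f=g$. For rationality: since $a_n(g)=n^{k+1}a_n(f)$ on $q$-expansions, we get $a_n(f)=a_n(g)/n^{k+1}\in\mathbb{Q}$ for $n\le -1$, and $a_0(g)=0$ is forced because $g$ is in the image of $D^{k+1}$; the principal part of $f$ is therefore rational, and Lemma 4.5 yields $f\in M_{-k}^!(\Gamma_0(N);\mathbb{Q})$. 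The main obstacle is precisely this rational descent, which is exactly why Lemma 4.5 was proved.
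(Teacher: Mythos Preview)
Your approach is essentially the same as the paper's: both use the Hodge filtration on $\mathcal{V}_k$ together with the Kodaira--Spencer isomorphism to reduce classes in $H^0(\mathcal{V}_k\otimes\Omega^1)$ into $F^k\otimes\Omega^1=\mathcal{F}^{\otimes k}\otimes\Omega^1$, and both identify the kernel via Bol's identity. The paper packages the inductive step as a single isomorphism $F^1\mathcal{V}_k\stackrel{\sim}{\to}(\mathcal{V}_k/F^k)\otimes\Omega^1$ and defines the map $\delta:\mathcal{V}_k/F^1\to\mathcal{F}^{\otimes k}\otimes\Omega^1$ directly, but the content is identical.

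There is one genuine slip in your rationality argument. Lemma~4.5 concerns $M_{-k}^{!,\infty}(\Gamma_0(N))$, not $M_{-k}^!(\Gamma_0(N))$: the principal part at the cusp $\infty$ alone does \emph{not} determine an element of $M_{-k}^!$, since poles at other cusps are allowed. So your appeal to Lemma~4.5 is invalid as stated. The paper avoids this issue entirely by working over $\mathbb{Q}$ from the outset (on an affine $\mathbb{Q}$-scheme $Y$ covering $\mathcal{Y}_0(N)$), so that $\delta$ is already an $\mathcal{O}_Y$-morphism and no descent is needed. If you prefer your route, note that you have in fact shown $a_n(f)=a_n(g)/n^{k+1}\in\mathbb{Q}$ for \emph{all} $n\neq 0$ (not just $n\le -1$); combined with Corollary~4.4 (which gives $M_{-k}^!(\Gamma_0(N))=M_{-k}^!(\Gamma_0(N);\mathbb{Q})\otimes_{\mathbb{Q}}\mathbb{C}$) and the injectivity of the full $q$-expansion map on $M_{-k}^!$ for $k>0$, this suffices. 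Alternatively, once exactness over $\mathbb{C}$ is established, exactness over $\mathbb{Q}$ follows immediately from flatness and Corollary~4.4.
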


Note that $M_{k+2}(\Gamma_0(N);\mathbb{Q})$ maps injectively into $H^1_{\dR}(\mathcal{Y}_0(N),\mathcal{V}_k)$.

The above theorem appears to have been rediscovered by several authors. For later reference, we recall the main steps its proof.

\begin{proof}[Sketch of proof]
  We work over $Y$, an affine $\mathbb{Q}$-scheme representing $\mathcal{Y}_0(N)_p$ for some odd prime $p$ not dividing $N$ (Remark \ref{obs:level-struct}); our statement is then obtained by taking $\GL_2(\mathbb{F}_p)$-invariants.

  Consider the Hodge filtration  $F^i\mathcal{V}_k \defeq \im (\mathcal{F}^{\tensor i}\tensor \mathcal{V}_{k-i} \to \mathcal{V}_k)$ on $\mathcal{V}_k$. The key fact is that the Gauss--Manin connection $\nabla$ induces an isomorphism of $\mathcal{O}_Y$-modules
\begin{align}\label{eq:key-fact}
F^1\mathcal{V}_k = F^1\mathcal{V}_k/F^{k+1}\mathcal{V}_k \stackrel{\sim}{\to} \mathcal{V}_k/F^k\mathcal{V}_k  \tensor \Omega^1_{Y/\mathbb{Q}}\text{.}
\end{align}
This is a consequence of the Kodaira--Spencer isomorphism, for it implies that $\nabla$ induces the successive isomorphisms $ F^i\mathcal{V}_k/F^{i+1}\mathcal{V}_k \stackrel{\sim}{\to} (F^{i-1}\mathcal{V}_k/ F^{i}\mathcal{V}_k)\tensor \Omega^1_{Y/\mathbb{Q}}$ (cf. \cite{coleman96} Lemma 4.2).

The surjectivity of the natural map
\begin{align}\label{eq:surjectiveness}
H^0(Y,\mathcal{F}^{\tensor k}\tensor \Omega^1_{Y/\mathbb{Q}}) \to H^1_{\dR}(Y,\mathcal{V}_k)
\end{align}
follows immediately from (\ref{eq:key-fact}): every $\alpha \in H^0(Y,\mathcal{V}_k\tensor \Omega^1_{Y/\mathbb{Q}})$ can be written as $\alpha = \nabla Q + \beta$ for some $Q \in H^0(Y,F^1\mathcal{V}_k)$ and $\beta \in H^0(Y,F^k\mathcal{V}_k\tensor \Omega^1_{Y/\mathbb{Q}}) = H^0(Y,\mathcal{F}^k\tensor \Omega^1_{Y/\mathbb{Q}})$, so that $\alpha$ and $\beta$ map to the same class in $H^1_{\dR}(Y,\mathcal{V}_k)$. Its kernel is given by the image of the $\mathcal{O}_Y$-morphism
\begin{align*}
 \delta:\mathcal{V}_k/F^1\mathcal{V}_k \to F^k\mathcal{V}_k\tensor \Omega^1_{Y/\mathbb{Q}} = \mathcal{F}^{\tensor k}\tensor \Omega^1_{Y/\mathbb{Q}}\text{, }\qquad [P]\mapsto \nabla P - \nabla Q
\end{align*}
where $P$ denotes a section of $\mathcal{V}_k$, $[P]$ its class in $\mathcal{V}_k/F^1\mathcal{V}_k$, and $Q$ the unique section of $F^1\mathcal{V}_k$ such that $\nabla P - \nabla Q$ lies in $F^k\mathcal{V}_k\tensor \Omega^1_{Y/\mathbb{Q}}$; the existence and uniqueness of $Q$ is guaranteed by the isomorphism (\ref{eq:key-fact}).

The map $\delta$ is essentially the Bol operator $D^{k+1}$. Indeed, we can identify
$$
\mathcal{V}_k/F^1\mathcal{V}_k \stackrel{\sim}{\to} (F^k\mathcal{V}_k)^{\vee}\text{, } \qquad [P]\mapsto \langle  \ , P \rangle_k\text{;}
$$
pulling back to $\mathbb{H}$, an explicit computation shows that if $f$ is a weakly holomorphic modular form of weight $-k$, then  $\delta(f\omega^{\tensor -k}) =  \frac{(-1)^k}{k!}D^{k+1}f \omega^{\tensor k}\tensor 2\pi i \, d\tau$ (cf. \cite{coleman96} Proposition 4.3).
\end{proof}

We now consider cuspidal cohomology. Let $\overline{\mathcal{V}} \defeq \mathbb{R}^1 \overline{\pi}_*\Omega^{\bullet}_{\overline{\mathcal{E}}/\mathcal{X}_0(N)}(\log \overline{\pi}^{-1}(\mathcal{Z}_N))$ with its logarithmic Gauss--Manin connection $\overline{\nabla}: \overline{\mathcal{V}} \to \overline{\mathcal{V}}\tensor \Omega^1_{\mathcal{X}_0(N)/\mathbb{Q}}(\log \mathcal{Z}_N)$. Then, $(\overline{\mathcal{V}}, \overline{\nabla})$ is a rank 2 vector bundle with logarithmic connection over $\mathcal{X}_0(N)$ extending $(\mathcal{V},\nabla)$. In fact, one can prove that the residues of $\overline{\nabla}$ at the cusps are nilpotent (cf. \cite{katz73} A1.4), so that $(\overline{\mathcal{V}},\overline{\nabla})$ is Deligne's canonical extension of $(\mathcal{V},\nabla)$; in particular, 
$$
H^1_{\dR}(\mathcal{Y}_0(N),\mathcal{V}_k) = \mathbb{H}^1(\mathcal{X}_0(N), \overline{\mathcal{V}}_k\tensor \Omega^{\bullet}_{\mathcal{X}_0(N)/\mathbb{Q}}(\log \mathcal{Z}_N) )\text{,}
$$
where $\overline{\mathcal{V}}_k \defeq \Sym^k \overline{\mathcal{V}}$. The residue $\overline{\mathcal{V}}_k\tensor \Omega^{1}_{\mathcal{X}_0(N)/\mathbb{Q}}(\log \mathcal{Z}_N) \to \overline{\mathcal{V}}_k|_{\mathcal{Z}_N}$ induces a surjective $\mathbb{Q}$-linear map
$$
\Res : H^1_{\dR}(\mathcal{Y}_0(N),\mathcal{V}_k) \to H^0(\mathcal{Z}_N,\overline{\mathcal{V}}_k|_{\mathcal{Z}_N})/\im(\Res \overline{\nabla})\cong H^0(\mathcal{Z}_N,\overline{\mathcal{F}}^{\otimes k}|_{\mathcal{Z}_N})\text{,}
$$
and we define
$$
H^1_{\dR,\cusp}(\mathcal{Y}_0(N),\mathcal{V}_k) \defeq \ker ( H^1_{\dR}(\mathcal{Y}_0(N),\mathcal{V}_k) \stackrel{\Res}{\to} H^0(\mathcal{Z}_N,\overline{\mathcal{F}}^{\otimes k}|_{\mathcal{Z}_N}))\text{.}
$$
See \cite{scholl85} 2.6, or \cite{KS16} 4, for a direct definition of the cuspidal cohomology as the hypercohomology of a subcomplex of $[\overline{\mathcal{V}}_k \stackrel{\overline{\nabla}}{\to} \overline{\mathcal{V}}_k \otimes \Omega^{1}_{\mathcal{X}_0(N)/\mathbb{Q}}(\log \mathcal{Z}_N)]$.

Let $f \in M_{k+2}^!(\Gamma_0(N);\mathbb{Q})$. Using the explicit description of the coordinates at the cusps (\ref{eq:coordinate-cusp}), we can identify
$$
\Res([f]) =(a_{0,g_1}(f),\ldots,a_{0,g_m}(f))\text{,}
$$
where $\{g_1,\ldots,g_m\}$ is a set of representatives of $\Gamma_0(N)\backslash\SL_2(\mathbb{Z})/\Gamma_{\infty}$. In particular, the subspace $S_{k+2}^!(\Gamma_0(N);\mathbb{Q})\subset M_{k+2}^!(\Gamma_0(N);\mathbb{Q})$ maps to the subspace $H^1_{\dR,\cusp}(\mathcal{Y}_0(N),\mathcal{V}_k)\subset H^1_{\dR}(\mathcal{Y}_0(N),\mathcal{V}_k)$. It follows from this observation, and from Theorem \ref{thm:derham-cohom} that the sequence of $\mathbb{Q}$-vector spaces
\begin{align}\label{eq:exact-seq-cusp}
  0 \to M_{-k}^!(\Gamma_0(N);\mathbb{Q}) \stackrel{D^{k+1}}{\to} S^!_{k+2}(\Gamma_0(N);\mathbb{Q}) & \to H^1_{\dR,\cusp}(\mathcal{Y}_0(N),\mathcal{V}_k) \to 0
\end{align}
is exact.

\begin{obs}
The spectral sequence associated to the Hodge filtration induces an exact sequence (cf. proof of Theorem 2.7 in \cite{scholl85})
$$
0 \to S_{k+2}(\Gamma_0(N);\mathbb{Q}) \to H^1_{\dR,\cusp}(\mathcal{Y}_0(N),\mathcal{V}_k) \to S_{k+2}(\Gamma_0(N);\mathbb{Q})^{\vee} \to 0\text{.}
$$
In particular, 
$$
\dim H^1_{\dR,\cusp}(\mathcal{Y}_0(N),\mathcal{V}_k) = 2 \dim S_{k+2}(\Gamma_0(N);\mathbb{Q})\text{.}
$$
\end{obs}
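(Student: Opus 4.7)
The plan is to exhibit the Hodge filtration on the logarithmic de Rham complex underlying $H^1_{\dR,\cusp}(\mathcal{Y}_0(N),\mathcal{V}_k)$, compute its graded pieces via Kodaira--Spencer, and verify that only the two extreme pieces survive, producing the claimed short exact sequence.

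First, I would equip the complex $[\overline{\mathcal{V}}_k \stackrel{\overline{\nabla}}{\to} \overline{\mathcal{V}}_k \tensor \Omega^1_{\mathcal{X}_0(N)/\mathbb{Q}}(\log \mathcal{Z}_N)]$ with the Hodge filtration $F^p = [F^p\overline{\mathcal{V}}_k \to F^{p-1}\overline{\mathcal{V}}_k \tensor \Omega^1(\log \mathcal{Z}_N)]$, which is well defined by Griffiths transversality. Here $F^p\overline{\mathcal{V}}_k$ is the image of $\overline{\mathcal{F}}^{\tensor p} \tensor \overline{\mathcal{V}}^{\tensor(k-p)}$ in $\overline{\mathcal{V}}_k$. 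The symplectic pairing identifies $\overline{\mathcal{V}}/\overline{\mathcal{F}} \cong \overline{\mathcal{F}}^\vee$, so $\mathrm{gr}^p_F \overline{\mathcal{V}}_k \cong \overline{\mathcal{F}}^{\tensor(2p-k)}$. For each $1 \le p \le k$, the induced differential on $\mathrm{gr}^p_F$ of the complex is a symmetric power of the Kodaira--Spencer map $\overline{\mathcal{F}}^{\tensor(2p-k)} \to \overline{\mathcal{F}}^{\tensor(2p-k-2)} \tensor \Omega^1(\log \mathcal{Z}_N) \cong \overline{\mathcal{F}}^{\tensor(2p-k)}$, hence an isomorphism; thus these middle graded pieces of the complex are acyclic.

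Next, I would compute the two surviving pieces. The top, $\mathrm{gr}^{k+1}_F$, is the sheaf $\overline{\mathcal{F}}^{\tensor k} \tensor \Omega^1(\log \mathcal{Z}_N) \cong \overline{\mathcal{F}}^{\tensor(k+2)}$ placed in cohomological degree $1$, so its $\mathbb{H}^1$ equals $M_{k+2}(\Gamma_0(N);\mathbb{Q})$; the explicit description of the residue map given just above the statement then identifies its image in the cuspidal cohomology with $S_{k+2}(\Gamma_0(N);\mathbb{Q})$. The bottom, $\mathrm{gr}^0_F$, is $\overline{\mathcal{F}}^{\tensor(-k)}$ in cohomological degree $0$, so its $\mathbb{H}^1$ is $H^1(\mathcal{X}_0(N),\overline{\mathcal{F}}^{\tensor(-k)})$. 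I would apply Serre duality on the smooth proper Deligne--Mumford stack $\mathcal{X}_0(N)$, with dualising sheaf $\Omega^1_{\mathcal{X}_0(N)/\mathbb{Q}} \cong \overline{\mathcal{F}}^{\tensor 2}(-\mathcal{Z}_N)$ (by non-logarithmic Kodaira--Spencer), to obtain $H^1(\mathcal{X}_0(N),\overline{\mathcal{F}}^{\tensor(-k)}) \cong H^0(\mathcal{X}_0(N),\overline{\mathcal{F}}^{\tensor(k+2)}(-\mathcal{Z}_N))^{\vee} = S_{k+2}(\Gamma_0(N);\mathbb{Q})^{\vee}$.

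Finally, I would invoke degeneration of the Hodge-to-de Rham spectral sequence at $E_1$, which holds for smooth proper Deligne--Mumford stacks in characteristic zero either by passing to the finite \'etale scheme cover of Remark \ref{obs:level-struct} and taking $\GL_2(\mathbb{F}_p)$-invariants, or by the standard Hodge-theoretic argument. This yields the required short exact sequence $0 \to S_{k+2}(\Gamma_0(N);\mathbb{Q}) \to H^1_{\dR,\cusp}(\mathcal{Y}_0(N),\mathcal{V}_k) \to S_{k+2}(\Gamma_0(N);\mathbb{Q})^{\vee} \to 0$, and the dimension identity is immediate. The main technical hurdle is maintaining the cuspidal constraint throughout, in particular checking that the Serre duality pairing on $H^1_{\dR}(\mathcal{Y}_0(N),\mathcal{V}_k)$ restricts to a perfect pairing between the two extreme graded pieces of the cuspidal part; this is precisely the bookkeeping carried out in \cite{scholl85} \S\S1--2, which one may appeal to if one wishes to avoid redoing it.
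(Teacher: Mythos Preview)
Your proposal is correct and is essentially the standard argument; indeed, it is precisely the computation carried out in \cite{scholl85}, to which the paper simply defers without giving its own proof. The paper treats this statement as a remark with a citation rather than proving it, so there is nothing further to compare.
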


Theorem \ref{thm:derham-cohom} (and the exact sequence (\ref{eq:exact-seq-cusp})) can be refined: we only need to consider weakly holomorphic cusp forms whose poles are concentrated at the cusp $\infty$. This follows from the next lemma.

\begin{lemma}\label{lemma:pole-concentration}
  We have
  $$
M_{k+2}^!(\Gamma_0(N);\mathbb{Q}) \subset M_{k+2}^{!,\infty}(\Gamma_0(N);\mathbb{Q}) + D^{k+1}M^!_{-k}(\Gamma_0(N);\mathbb{Q})\text{.}
  $$
\end{lemma}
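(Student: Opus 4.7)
The plan is to reduce the statement to an analogous one over $\mathbb{C}$ via a faithful-flatness argument, and then to solve a Mittag--Leffler problem on $\mathcal{X}_0(N)$.

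Corollary \ref{coro:field-definition} gives $M_r^!(\Gamma_0(N);\mathbb{C}) = M_r^!(\Gamma_0(N);\mathbb{Q}) \tensor_{\mathbb{Q}} \mathbb{C}$, and analogously for $M_r^{!,\infty}$, for any integer $r$. Tensoring with $\mathbb{C}$ therefore carries the cokernel of the $\mathbb{Q}$-linear inclusion
\[
M_{k+2}^{!,\infty}(\Gamma_0(N);\mathbb{Q}) + D^{k+1}M_{-k}^!(\Gamma_0(N);\mathbb{Q}) \hookrightarrow M_{k+2}^!(\Gamma_0(N);\mathbb{Q})
\]
to the cokernel of its $\mathbb{C}$-counterpart. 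Since $\mathbb{Q} \to \mathbb{C}$ is faithfully flat, these two cokernels vanish simultaneously, so it suffices to prove the inclusion over $\mathbb{C}$. This base-change trick is the whole point of invoking the $q$-expansion principle here: it lets me sidestep the cyclotomic nature of Fourier expansions of rational forms at non-$\infty$ cusps.

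Working over $\mathbb{C}$, given $f \in M_{k+2}^!(\Gamma_0(N))$, I seek $h \in M_{-k}^!(\Gamma_0(N))$ such that $D^{k+1}h$ and $f$ share the same principal part at every cusp $p \neq \infty$. Bol's identity $(D^{k+1}h)|_{g_p,\,k+2} = D^{k+1}(h|_{g_p,\,-k})$, combined with the elementary calculation $D^{k+1}(q_w^n) = (n/w)^{k+1} q_w^n$ for the local coordinate $q_w = e^{2\pi i \tau/w}$ at the cusp $p$ of width $w = w_p$, converts this into the coefficient-wise requirement
\[
a_{n,g_p}(h) = \bigl(w_p/n\bigr)^{k+1}\, a_{n,g_p}(f) \qquad \text{for every } n<0 \text{ and } p \neq \infty.
\]
The task thus reduces to realising arbitrary prescribed principal parts for $h$ at all cusps $p \neq \infty$, with complete freedom at the cusp $\infty$ (constant terms being killed by the Bol operator is harmless, since they do not contribute to principal parts).

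Geometrically, this is a Mittag--Leffler problem on $\mathcal{X}_0(N)$. Choosing $m$ larger than all the pole orders of $f$ away from $\infty$, the short exact sequence of coherent sheaves
\[
0 \to \overline{\mathcal{F}}^{\tensor -k}(m[\infty]) \to \overline{\mathcal{F}}^{\tensor -k}\Bigl(m[\infty] + m\!\!\sum_{p \neq \infty}[p]\Bigr) \to Q \to 0,
\]
where $Q$ is a torsion sheaf supported on the non-$\infty$ cusps whose global sections parametrise principal parts of order $\leq m$ there, yields upon taking $H^0$ the desired surjection
\[
H^0\Bigl(\overline{\mathcal{F}}^{\tensor -k}\bigl(m[\infty] + m\!\!\sum_{p\neq\infty}[p]\bigr)\Bigr) \twoheadrightarrow H^0(Q)
\]
as soon as $H^1(\mathcal{X}_0(N), \overline{\mathcal{F}}^{\tensor -k}(m[\infty])) = 0$. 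The latter holds for $m$ sufficiently large because the line bundle then has arbitrarily large degree; the main technical point is establishing this vanishing on the stack $\mathcal{X}_0(N)$, which I would handle by passing to the finite étale $\GL_2(\mathbb{F}_p)$-cover of Remark \ref{obs:level-struct}, applying Serre duality together with the ampleness of the Hodge bundle on that projective scheme, and then taking $\GL_2(\mathbb{F}_p)$-invariants to descend the desired section back to $M_{-k}^!(\Gamma_0(N))$.
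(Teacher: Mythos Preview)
Your argument is correct and genuinely different from the paper's. You attack the problem directly: given $f$, you manufacture $h\in M_{-k}^!$ whose Bol image matches the principal parts of $f$ at every cusp other than $\infty$, reducing to a Mittag--Leffler problem for the line bundle $\overline{\mathcal{F}}^{\tensor -k}$ and settling it by $H^1$-vanishing for large twists. The paper instead argues cohomologically: it shows that $M_{k+2}^{!,\infty}(\Gamma_0(N);\mathbb{Q})$ already surjects onto $H^1_{\dR}(\mathcal{Y}_0(N),\mathcal{V}_k)$ by replaying the Gauss--Manin/Kodaira--Spencer argument of Theorem~\ref{thm:derham-cohom} on the affine curve $X^*=X\setminus\{\infty\}$; since the kernel of $M_{k+2}^!\to H^1_{\dR}$ is exactly $D^{k+1}M_{-k}^!$, the inclusion follows. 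The paper's route stays over $\mathbb{Q}$ throughout and recycles existing machinery, while yours is more self-contained and never touches the connection $\nabla$; the price is the faithful-flatness reduction to $\mathbb{C}$ (which you could in fact avoid, since $\mathcal{Z}_N\setminus\{\infty\}$ is already $\mathbb{Q}$-rational) and a little care in the final descent step, where it is cleaner to say that $H^1$ on the stack is the $\GL_2(\mathbb{F}_p)$-invariants of $H^1$ on the cover and hence vanishes, rather than to average a lifted section.
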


\begin{proof}
  Let $p$ be an odd prime not dividing $N$, and $X$ (resp. $Y$) be a $\mathbb{Q}$-scheme representing the étale cover $\mathcal{X}_0(N)_p$ (resp. $\mathcal{Y}_0(N)_p$) of $\mathcal{X}_0(N)$ (resp. $\mathcal{Y}_0(N)$. Recall that $Y$ is affine, and $X$ is the projective compactification of $Y$ (Remark \ref{obs:level-struct}).

  We denote by $Z$ the cuspidal locus, so that $X\setminus Z = Y$.  Consider the affine curve $X^*=X\setminus\{\infty\}$ containing $Y$. We can identify $H^0(X^*, \overline{\mathcal{F}}^{\tensor k} \tensor \Omega^1_{X/\mathbb{Q}}(\log Z))$ as a $\mathbb{Q}$-subspace of $H^0(Y, \mathcal{F}^{\tensor k} \tensor \Omega^1_{Y/\mathbb{Q}})$. By taking $\GL_2(\mathbb{F}_p)$-invariants, our statement will follow from the surjectivity of the natural map
$$
H^0(X^*, \overline{\mathcal{F}}^{\tensor k}\tensor \Omega^1_{X/\mathbb{Q}}(\log Z)) \to H^1_{\dR}(Y,\mathcal{V}_k)\text{.}
$$

As $(\overline{\mathcal{V}}_k,\overline{\nabla})|_{X^*}$ is Deligne's canonical extension of $(\mathcal{V}_k,\nabla)$ to $X^*$,  we have a natural isomorphism
$$
\mathbb{H}^1(X^*,\overline{\mathcal{V}}_k\tensor \Omega^{\bullet}_{X/\mathbb{Q}}(\log Z))\stackrel{\sim}{\to} H_{\dR}^1(Y,\mathcal{V}_k)\text{.}
$$
Since $X^*$ is affine,  $\mathbb{H}^1(X^*,\overline{\mathcal{V}}_k\tensor \Omega^{\bullet}_{X/\mathbb{Q}}(\log Z))$ gets identified with the cokernel of the $k$-linear map $\overline{\nabla}: H^0(X^*,\overline{\mathcal{V}}_k) \to H^0(X^*,\overline{\mathcal{V}}_k\tensor \Omega^1_{X/\mathbb{Q}}(\log Z))$; in particular, we obtain a surjective map
$$
H^0(X^*,\overline{\mathcal{V}}_k\tensor \Omega^1_{X/\mathbb{Q}}(\log Z)) \to \mathbb{H}^1(X^*,\overline{\mathcal{V}}_k\tensor \Omega^{\bullet}_{X/\mathbb{Q}}(\log Z))\text{.}
$$
The surjectivity of its restriction to the subspace $H^0(X^*, \overline{\mathcal{F}}^{\tensor k}\tensor \Omega^1_{X/\mathbb{Q}}(\log Z))$
$$
H^0(X^*, \overline{\mathcal{F}}^{\tensor k}\tensor \Omega^1_{X/\mathbb{Q}}(\log Z)) \to \mathbb{H}^1(X^*,\overline{\mathcal{V}}_k\tensor \Omega^{\bullet}_{X/\mathbb{Q}}(\log Z))
$$
is then proved in the same way as the surjectivity of (\ref{eq:surjectiveness}).
\end{proof}

\begin{coro}\label{coro:derahm-whmf}
  The map $M^!_{k+2}(\Gamma_0(N);\mathbb{Q}) \to H^1_{\dR}(\mathcal{Y}_0(N),\mathcal{V}_k)$, $f\mapsto [f]$, induces short exact sequences of $\mathbb{Q}$-vector spaces
  $$
0 \to M_{-k}^{!,\infty}(\Gamma_0(N);\mathbb{Q}) \stackrel{D^{k+1}}{\to} M^{!,\infty}_{k+2}(\Gamma_0(N);\mathbb{Q})  \to H^1_{\dR}(\mathcal{Y}_0(N),\mathcal{V}_k) \to 0
$$
and
$$
0 \to M_{-k}^{!,\infty}(\Gamma_0(N);\mathbb{Q}) \stackrel{D^{k+1}}{\to} S^{!,\infty}_{k+2}(\Gamma_0(N);\mathbb{Q}) \to H^1_{\dR,\cusp}(\mathcal{Y}_0(N),\mathcal{V}_k) \to 0\text{.}
$$
\hfill $\square$
\end{coro}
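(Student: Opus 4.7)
The plan is to deduce both exact sequences from Theorem \ref{thm:derham-cohom} and from its cuspidal counterpart (\ref{eq:exact-seq-cusp}), using Lemma \ref{lemma:pole-concentration} for surjectivity and Bol's identity to control poles at finite cusps.

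For surjectivity of the middle map onto $H^1_{\dR}(\mathcal{Y}_0(N),\mathcal{V}_k)$: given a class $[f]$ with $f \in M^!_{k+2}(\Gamma_0(N);\mathbb{Q})$, Lemma \ref{lemma:pole-concentration} lets me write $f = f' + D^{k+1}g$ with $f' \in M^{!,\infty}_{k+2}(\Gamma_0(N);\mathbb{Q})$ and $g \in M^!_{-k}(\Gamma_0(N);\mathbb{Q})$, so $[f]=[f']$ is already represented by an element of $M^{!,\infty}_{k+2}$. For the cuspidal refinement I would start from $f \in S^!_{k+2}(\Gamma_0(N);\mathbb{Q})$ representing a class in $H^1_{\dR,\cusp}$ and observe that $D^{k+1}g$ automatically lies in $S^!_{k+2}$: by Bol's identity $D^{k+1}(g|_{g_0,-k}) = (D^{k+1}g)|_{g_0,k+2}$, the Fourier expansion at a cusp of width $w$ determined by $g_0 \in \SL_2(\mathbb{Z})$ is
$$
(D^{k+1}g)|_{g_0,k+2} \;=\; \sum_n (n/w)^{k+1}\, a_{n,g_0}(g)\, q^{n/w}\text{,}
$$
whose $n=0$ coefficient vanishes. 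Hence $f'=f-D^{k+1}g$ lies in $S^!_{k+2} \cap M^{!,\infty}_{k+2} = S^{!,\infty}_{k+2}$.

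Injectivity of $D^{k+1}$ on the smaller space $M^{!,\infty}_{-k}(\Gamma_0(N);\mathbb{Q})$ is inherited from its injectivity on $M^!_{-k}(\Gamma_0(N);\mathbb{Q})$, which is part of Theorem \ref{thm:derham-cohom}. The main (mild) obstacle is exactness in the middle: starting from $f \in M^{!,\infty}_{k+2}(\Gamma_0(N);\mathbb{Q})$ with $[f]=0$ in $H^1_{\dR}$, Theorem \ref{thm:derham-cohom} produces a unique $g \in M^!_{-k}(\Gamma_0(N);\mathbb{Q})$ with $D^{k+1}g=f$, and I must verify that $g$ has no poles at finite cusps. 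Here I would invoke the above local formula once more: since $(n/w)^{k+1}\neq 0$ for $n<0$, the vanishing of $a_{n,g_0}(D^{k+1}g)$ for all $n<0$ forces $a_{n,g_0}(g)=0$ for all $n<0$. Applied at every $g_0 \in \SL_2(\mathbb{Z})$ whose associated cusp is different from $\infty$, this gives $g \in M^{!,\infty}_{-k}(\Gamma_0(N);\mathbb{Q})$, as required.

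The cuspidal sequence then requires no new input: surjectivity onto $H^1_{\dR,\cusp}$ was already established above, injectivity is inherited, and the middle-exactness argument transplants verbatim from (\ref{eq:exact-seq-cusp}) to its $S^{!,\infty}$-refinement, since the $g$ produced from any $f \in S^{!,\infty}_{k+2}$ automatically lies in $M^{!,\infty}_{-k}$ by the same Bol computation.
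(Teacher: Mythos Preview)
Your proof is correct and follows the approach the paper has in mind: the corollary is marked with a bare $\square$, so it is intended as an immediate consequence of Theorem~\ref{thm:derham-cohom}, the cuspidal sequence~(\ref{eq:exact-seq-cusp}), and Lemma~\ref{lemma:pole-concentration}, and you have simply spelled out the details. In particular, your use of Bol's identity to show that if $D^{k+1}g \in M^{!,\infty}_{k+2}$ then $g \in M^{!,\infty}_{-k}$ is exactly the verification the paper leaves implicit.
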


We finish this section with a brief discussion of the de Rham pairing
\begin{align}\label{eq:de-rham-pairing}
\langle \ , \ \rangle_{\dR} : H^1_{\dR,\cusp}(\mathcal{Y}_0(N),\mathcal{V}_k)\times H^1_{\dR}(\mathcal{Y}_0(N),\mathcal{V}_k) \to \mathbb{Q}\text{,}
\end{align}
whose restriction to $H^1_{\dR,\cusp}(\mathcal{Y}_0(N),\mathcal{V}_k)\times H^1_{\dR,\cusp}(\mathcal{Y}_0(N),\mathcal{V}_k)$ is a symplectic $\mathbb{Q}$-bilinear form.

Explicitly, we can compute $\langle \ , \ \rangle_{\dR}$ (defined in terms of the cup-product in de Rham cohomology) via the usual recipe involving residues at the cusps (cf. \cite{coleman96} Theorem 5.2). In fact, by Lemma \ref{lemma:pole-concentration}, we only need to consider the cusp $\infty$. Let $\varphi: \Spec \mathbb{Q}(\!(q)\!) \to \mathcal{Y}_0(N)$ be the morphism given by the Tate curve (as in the proof of Proposition \ref{prop:q-expansion-principle}). For $f \in S_{k+2}^{!,\infty}(\Gamma_0(N);\mathbb{Q})$ and $g \in M_{k+2}^{!,\infty}(\Gamma_0(N);\mathbb{Q})$, we have
$$
\langle [f],[g]\rangle_{\dR} = {\Res}_{q=0}\langle F,\varphi^*\omega_g \rangle_k\text{,}
$$
where  $F \in H^0(\Spec \mathbb{Q}(\!(q)\!), \varphi^*\mathcal{V}_k)$ satisfies
$$
\nabla F = \varphi^*\omega_f = f(q)\omega^{k}\tensor d \log q\text{.}
$$
Solving for $F$, we obtain the explicit formula
\begin{align}\label{eq:derham-cup}
\langle [f],[g]\rangle_{\dR} = k!\sum_{n \in \mathbb{Z}}\frac{a_n(f)a_{-n}(g)}{n^{k+1}}\text{.}
\end{align}
Note that, since $g$ is meromorphic at infinity, the above sum is finite.

\section{Realisations of modular motives}\label{sec:real-modular-motives}

Next, we construct an object $H^1(\mathcal{Y}_0(N),V_k)$ of $\mathcal{H}(\mathbb{Q})$ whose de Rham realisation is
$$
H^1(\mathcal{Y}_0(N),V_k)_{\dR} = H^1_{\dR}(\mathcal{Y}_0(N),\mathcal{V}_k)\text{,}
$$
as defined in the last section. Throughout, $k\ge 0$ is an \emph{even} integer.

For the Betti realisation, let $\pi: \mathcal{E} \to \mathcal{Y}_0(N)$ be the universal elliptic curve over $\mathcal{Y}_0(N)$, and consider the $\mathbb{Q}$-local system
$$
\mathbb{V} \defeq R^1\pi^{\an}_{\mathbb{C},*} \mathbb{Q}_{\mathcal{E}^{\an}_{\mathbb{C}}}
$$
over $\mathcal{Y}_0(N)_{\mathbb{C}}^{\an}$, whose stalk at a point $y$ of $\mathcal{Y}_0(N)_{\mathbb{C}}^{\an}$, corresponding to a pair $(E,C)$ defined over $\mathbb{C}$, is the Betti cohomology with rational coefficients $\mathbb{V}_y = H^1(E^{\an},\mathbb{Q})$. Setting $\mathbb{V}_k \defeq \Sym^k \mathbb{V}$, we define
$$
H^1(\mathcal{Y}_0(N),V_k)_{\B} = H^1(\mathcal{Y}_0(N)_{\mathbb{C}}^{\an},\mathbb{V}_k)\text{.}
$$

The Betti realisation has a concrete description in terms of group cohomology. The pullback of $\mathbb{V}$ to $\mathbb{H}$ via the uniformisation map $\mathbb{H} \to \mathcal{Y}_0(N)_{\mathbb{C}}^{\an}$ (\ref{eq:unif-Y_0(N)}) is trivialised by the global sections
$$
X\defeq\sigma_1^{\vee}\ \ \ \text{  and }\ \ \ Y \defeq -\sigma_{\tau}^{\vee}\text{,}
$$
where $\sigma_1,\sigma_{\tau} \in H^0(\mathbb{H},\mathbb{V}^{\vee})$ are the constant families of 1-cycles given by $1$ and $\tau$ under the identification $H^1(\mathbb{E}_{\tau},\mathbb{Q})^{\vee}=\mathbb{Q} + \tau \mathbb{Q}$. Thus, $\mathbb{V}$ can be regarded as the vector space $V = \mathbb{Q}X \oplus \mathbb{Q} Y$ endowed with the right $\Gamma_0(N)$-action
$$
(X,Y)|_{\gamma} = (aX + bY,cX + dY)\text{, } \qquad \gamma = \left(\begin{array}{cc} a & b \\ c & d\end{array} \right) \in \Gamma_0(N)\text{.}
$$
In particular, if we set $V_k \defeq \Sym^k V_k$, then we have a canonical identification
\begin{align}\label{eq:group-cohom}
H^1(\mathcal{Y}_0(N)_{\mathbb{C}}^{\an},\mathbb{V}_k) \cong H^1(\Gamma_0(N),V_k)\text{.}
\end{align}

Fixing an arbitrary $\tau_0\in \mathbb{H}$, the comparison isomorphism
\begin{align}\label{eq:comparison}
\comp : H^1_{\dR}(\mathcal{Y}_0(N),\mathcal{V}_k) \tensor_{\mathbb{Q}} \mathbb{C} \stackrel{\sim}{\to} H^1(\mathcal{Y}_0(N)_{\mathbb{C}}^{\an},\mathbb{V}_k)\tensor_{\mathbb{Q}}\mathbb{C}
\end{align}
can be explicitly given by
$$
\comp([f]) = [C_f]\text{,}
$$
where $[f]$ denotes the class of some $f \in M_{k+2}^!(\Gamma_0(N))$ (see Sect. \ref{sec:derham}), and $C_f : \Gamma_0(N) \to V_k$ is the 1-cocycle defined by
\begin{align}\label{eq:1-cocyle}
C_{f}:\Gamma_0(N) \to V_k\text{, }\qquad \gamma \mapsto (2\pi i)^{k+1}\int_{\gamma^{-1}\cdot\tau_0}^{\tau_0}f(\tau)(X - \tau Y)^kd\tau\text{.}
\end{align}
This follows from the fact that the `relative comparison isomorphism'
\begin{align}\label{eq:rel-comp}
  \comp : (\mathcal{V}_{\mathbb{C}}^{\an},\nabla^{\an}) \stackrel{\sim}{\to} (\mathbb{V}\tensor_{\mathbb{Q}} \mathcal{O}_{\mathcal{Y}_0(N)^{\an}_{\mathbb{C}}},\id \tensor d)
\end{align}
satisfies $\comp(\omega) = \left(\int_{\sigma_1}\omega\right)\sigma_1^{\vee} + \left(\int_{\sigma_{\tau}}\omega \right)\sigma_\tau^{\vee} = 2\pi i (X - \tau Y)$.

The weight and Hodge filtrations on $H^1_{\dR}(\mathcal{Y}_0(N),\mathcal{V}_k)$ are explicitly defined by
\begin{align*}
&W^{\dR}_{k} = 0  \\
&W^{\dR}_{k+1} = \cdots = W^{\dR}_{2k+1} = H^1_{\dR,\cusp}(\mathcal{Y}_0(N),\mathcal{V}_k)   \\
&W^{\dR}_{2k+2} = H^1_{\dR}(\mathcal{Y}_0(N),\mathcal{V}_k)
\end{align*}
and
\begin{align*}
  &F_{\dR}^0 = H_{\dR}^1(\mathcal{Y}_0(N),\mathcal{V}_k)\\
  &F^1_{\dR} = \cdots = F_{\dR}^{k+1} = \im (H^0(\mathcal{X}_0(N),\overline{\mathcal{F}}^{\tensor k}\tensor \Omega_{\mathcal{X}_0(N)/\mathbb{Q}}^1(\log \mathcal{Z}_N))\to H_{\dR}^1(\mathcal{Y}_0(N),\mathcal{V}_k))\\
  &F_{\dR}^{k+2}=0\text{.}
\end{align*}
The weight filtration on $H^1(\mathcal{Y}_0(N)_{\mathbb{C}}^{\an},\mathbb{V}_k)$ is defined by
  \begin{align*}
    &W^{\B}_k=0\\
    &W^{\B}_{k+1} = \cdots = W^{\B}_{2k+1} = \im(H^1(\mathcal{X}_0(N)_{\mathbb{C}}^{\an}, j^{\an}_{\mathbb{C},*}\mathbb{V}_k)\to H^1(\mathcal{Y}_0(N)_{\mathbb{C}}^{\an},\mathbb{V}_k))\\
    &W^{\B}_{2k+2} = H^1(\mathcal{Y}_0(N)_{\mathbb{C}}^{\an},\mathbb{V}_k)\text{,}
  \end{align*}
where $j: \mathcal{Y}_0(N) \to \mathcal{X}_0(N)$ is the natural open immersion. The real Frobenius $F_{\infty}$ is induced by the complex conjugation $\mathcal{Y}_0(N)_{\mathbb{C}}^{\an} \to \mathcal{Y}_0(N)_{\mathbb{C}}^{\an}$; under the identification (\ref{eq:group-cohom}), we have
$$
F_{\infty}([C_f]) = [\gamma\mapsto C_f(\epsilon \gamma \epsilon^{-1})|_{\epsilon}]\text{, }\qquad \epsilon = \left(\begin{array}{cc} 1 & 0 \\ 0 &-1\end{array}\right) \text{.}
$$

\begin{obs}
Under the identification  (\ref{eq:group-cohom}), the subspace  $W^{\B}_{k+1}$ is given by the parabolic cohomology $\bigcap_{c}\ker (H^1(\Gamma_0(N),V_k) \to H^1(\Gamma_0(N)_c,V_k))$, where $c$ runs through the set of cusps of $\Gamma_0(N)$ and $\Gamma_0(N)_c\le \Gamma_0(N)$ denotes the stabiliser of the cusp $c$ (see \cite{zucker79} Proposition 12.5).
\end{obs}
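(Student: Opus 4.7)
The plan is to derive the statement from the Leray spectral sequence for the open immersion $j^{\an}_{\mathbb{C}} : \mathcal{Y}_0(N)^{\an}_{\mathbb{C}} \hookrightarrow \mathcal{X}_0(N)^{\an}_{\mathbb{C}}$ applied to the local system $\mathbb{V}_k$. Its low-degree terms yield the exact sequence
$$
0 \to H^1(\mathcal{X}_0(N)^{\an}_{\mathbb{C}}, j^{\an}_{\mathbb{C},*}\mathbb{V}_k) \to H^1(\mathcal{Y}_0(N)^{\an}_{\mathbb{C}}, \mathbb{V}_k) \stackrel{e}{\to} H^0(\mathcal{X}_0(N)^{\an}_{\mathbb{C}}, R^1j^{\an}_{\mathbb{C},*}\mathbb{V}_k),
$$
and by definition $W^{\B}_{k+1}$ is the image of the first arrow, hence the kernel of $e$. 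It therefore suffices to identify $e$ with the direct sum of the restriction maps $H^1(\Gamma_0(N), V_k) \to H^1(\Gamma_0(N)_c, V_k)$.

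First, I would compute the stalks of $R^1j^{\an}_{\mathbb{C},*}\mathbb{V}_k$. Since this sheaf is supported on the finite cuspidal locus $\mathcal{Z}_N(\mathbb{C})$, its global sections split as $\bigoplus_c (R^1j^{\an}_{\mathbb{C},*}\mathbb{V}_k)_c$. For a cusp $c$, pick a small neighbourhood $U$ of $c$ in $\mathcal{X}_0(N)^{\an}_{\mathbb{C}}$ and consider its punctured version $U \setminus \{c\} \subset \mathcal{Y}_0(N)^{\an}_{\mathbb{C}}$. Its preimage in $\mathbb{H}$ under the uniformisation $(\ref{eq:unif-Y_0(N)})$ is a horoball at the parabolic fixed point determined by $c$; the uniformisation exhibits $U \setminus \{c\}$ as the orbifold quotient of this contractible horoball by $\Gamma_0(N)_c$. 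As $k$ is even, the central element $-I$ acts trivially on $V_k$, so orbifold cohomology reduces to group cohomology, and
$$
(R^1j^{\an}_{\mathbb{C},*}\mathbb{V}_k)_c \;=\; H^1(U \setminus \{c\}, \mathbb{V}_k) \;=\; H^1(\Gamma_0(N)_c, V_k).
$$

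Second, I would check by naturality that $e$ coincides with the direct sum of the restriction maps. The $K(\pi,1)$-argument of the previous paragraph, applied to all of $\mathbb{H}$ instead of a horoball, is precisely the identification $(\ref{eq:group-cohom})$, and the inclusion $U \setminus \{c\} \hookrightarrow \mathcal{Y}_0(N)^{\an}_{\mathbb{C}}$ corresponds on fundamental groups to $\Gamma_0(N)_c \hookrightarrow \Gamma_0(N)$. The germ of the edge map $e$ at $c$ is, by general properties of the Leray spectral sequence for an open immersion, the composition of the restriction map $H^1(\mathcal{Y}_0(N)^{\an}_{\mathbb{C}}, \mathbb{V}_k) \to H^1(U \setminus \{c\}, \mathbb{V}_k)$ with the stalk identification above, and this is exactly $\mathrm{res}_{\Gamma_0(N)_c}^{\Gamma_0(N)}$.

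The main obstacle is the stalk calculation: one has to handle the orbifold points of $\mathcal{Y}_0(N)^{\an}_{\mathbb{C}}$ and the presence of $\{\pm I\}$ in $\Gamma_0(N)_c$, both of which are harmless with $\mathbb{Q}$-coefficients and because $k$ is even, but they require one to work either in the stacky setting or on a fine level cover (as in Remark \ref{obs:level-struct}) and descend by taking $\GL_2(\mathbb{F}_p)$-invariants. Once this is in place, combining both steps yields that $W^{\B}_{k+1} = \ker e = \bigcap_c \ker\bigl(H^1(\Gamma_0(N), V_k) \to H^1(\Gamma_0(N)_c, V_k)\bigr)$, as asserted, which is the content of \cite{zucker79} Proposition 12.5 in our particular setting.
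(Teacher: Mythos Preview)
Your argument is correct: the Leray low-degree exact sequence for the open immersion $j$ identifies the image of $H^1(\mathcal{X}_0(N)^{\an}_{\mathbb{C}}, j^{\an}_{\mathbb{C},*}\mathbb{V}_k)$ in $H^1(\mathcal{Y}_0(N)^{\an}_{\mathbb{C}}, \mathbb{V}_k)$ with the kernel of the edge map $e$, and your stalk computation of $R^1 j^{\an}_{\mathbb{C},*}\mathbb{V}_k$ at a cusp, together with the naturality of restriction, pins down $e$ as the product of the group-cohomology restrictions to the stabilisers. The handling of the orbifold issues and of $-I$ (harmless since $k$ is even and coefficients are rational) is also fine, and passing to a fine level cover as in Remark~\ref{obs:level-struct} is indeed the clean way to make this rigorous.

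There is no proof in the paper to compare against: the remark is stated without argument and simply refers the reader to \cite{zucker79} Proposition~12.5. What you have written is essentially a sketch of the proof of that proposition in the present setting, so your proposal does more than the paper itself does here.
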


\begin{theorem}\label{thm:object-H}
  With the above definitions, the triple
$$
H^1(\mathcal{Y}_0(N),V_k) \defeq ((H^1(\mathcal{Y}_0(N),V_k)_{\B}, W^{\B}, F_{\infty}),(H^1(\mathcal{Y}_0(N),V_k)_{\dR},W^{\dR},F_{\dR}),\comp)\text{,}
$$
is an object of the category of realisations $\mathcal{H}(\mathbb{Q})$. Moreover, the weight $k+1$ part of $H^1(\mathcal{Y}_0(N),V_k)$, which we denote by $H_{\cusp}^1(\mathcal{Y}_0(N),V_k)$, is of Hodge type $\{(k+1,0),(0,k+1)\}$.
\end{theorem}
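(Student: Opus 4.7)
The plan is to reduce the assertion to a smooth quasi-projective $\mathbb{Q}$-scheme via an auxiliary level-$p$ cover, invoke the general theory of mixed Hodge structures with coefficients in an admissible polarised variation of Hodge structure (Deligne, Zucker, Saito), and finally extract the Hodge type of the cuspidal part from the explicit description of the Hodge filtration in Section~6 combined with Eichler--Shimura. Throughout, I fix an odd prime $p\nmid N$, set $G \defeq \GL_2(\mathbb{F}_p)$, and write $Y$ (resp.\ $X$) for the affine (resp.\ projective) smooth $\mathbb{Q}$-scheme representing $\mathcal{Y}_0(N)_p$ (resp.\ $\mathcal{X}_0(N)_p$), with cuspidal divisor $Z=X\setminus Y$. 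Since taking $G$-invariants is exact and commutes with the Betti and de Rham functors, and since the axioms defining $\mathcal{H}(\mathbb{Q})$ pass to $G$-stable summands, it suffices to produce an object of $\mathcal{H}(\mathbb{Q})$ built from $H^1(Y,V_k)$ in a $G$-equivariant way.

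On $Y$, the data constructed in the text are all organised by the canonical extension $(\overline{\mathcal{V}}_k,\overline{\nabla})$ to $X$. I would first check that the relative comparison isomorphism (\ref{eq:rel-comp}) extends to a filtered isomorphism of hypercohomologies on $X^{\an}_{\mathbb{C}}$, so that $\comp$ sends $W^{\dR}\tensor \mathbb{C}$ onto $W^{\B}\tensor \mathbb{C}$: the $W_{k+1}$ subspace coincides with the image of the compactification both in the Betti setting (parabolic cohomology) and in the de Rham setting (kernel of the residue map on the log de Rham complex). That $(H_{\B},W^{\B},\comp(F_{\dR}))$ is then a $\mathbb{Q}$-mixed Hodge structure follows from the Zucker--Saito machinery applied to the admissible polarised variation $\mathcal{V}_k$ of weight $k$. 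Finally, compatibility with the real Frobenius (diagram~\ref{diagram:comp-frobenius}) reduces to the observation that $Y$ is defined over $\mathbb{Q}\subset\mathbb{R}$, so complex conjugation on $Y(\mathbb{C})$ intertwines with complex conjugation of coefficients via the $\mathbb{R}$-form of (\ref{eq:rel-comp}); this is a direct verification.

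For the Hodge-type statement, $H_{\cusp}^1(\mathcal{Y}_0(N),V_k)$ is pure of weight $k+1$ by definition, so its Hodge decomposition is concentrated in bidegrees $(p,q)$ with $p+q=k+1$. The concrete description of $F^{\bullet}_{\dR}$ in Section~6 gives $F^1=F^2=\cdots=F^{k+1}$, equal to the image of $S_{k+2}(\Gamma_0(N);\mathbb{Q})$, and $F^{k+2}=0$; hence the only possible bidegrees are $(k+1,0)$ and $(0,k+1)$. The classical Eichler--Shimura decomposition $H^1_{\cusp}(\Gamma_0(N),V_k)\tensor_{\mathbb{Q}}\mathbb{C}\cong S_{k+2}(\Gamma_0(N))\oplus\overline{S_{k+2}(\Gamma_0(N))}$ then identifies these two summands with $H^{k+1,0}$ and $H^{0,k+1}$ respectively, both of dimension $\dim S_{k+2}(\Gamma_0(N))$.

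The main obstacle is the construction of the mixed Hodge structure with nontrivial coefficients in the second paragraph: everything else amounts to routine checks or invocations of classical Eichler--Shimura. If one prefers to avoid the general machinery, one can alternatively realise $H^1(\mathcal{Y}_0(N),V_k)$ as a direct summand of $H^{k+1}$ of an open Kuga--Sato variety, cut out by a natural idempotent, and transport Deligne's MHS on that $H^{k+1}$ across the idempotent; the remaining technical point is then to match the resulting Hodge and weight filtrations with the ones described in Section~6 in terms of modular forms, which ultimately reduces to the Kodaira--Spencer identification and the explicit residue formula.
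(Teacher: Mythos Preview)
Your proposal is correct and follows essentially the same route as the paper: the commutativity of the Frobenius diagram is checked directly from the explicit formulas for $\comp$ and $F_{\infty}$, the mixed Hodge structure is obtained by invoking Zucker's theorem on cohomology with coefficients in a polarised variation, and the Hodge type of the cuspidal part is read off from Eichler--Shimura. The paper's proof is considerably terser (three sentences citing Zucker and Eichler--Shimura), whereas you spell out the passage to the level-$p$ cover, the matching of weight filtrations under $\comp$, and the Kuga--Sato alternative; these elaborations are all consistent with the paper's argument.
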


Note that the Hodge filtration $F^{k+1}_{\dR}$ on $H^1_{\dR}(\mathcal{Y}_0(N),\mathcal{V}_k)$ (resp. on $H^1_{\dR,\cusp}(\mathcal{Y}_0(N),\mathcal{V}_k)$) is isomorphic to $M_{k+2}(\Gamma_0(N);\mathbb{Q})$ (resp. $S_{k+2}(\Gamma_0(N);\mathbb{Q})$) via the map $f\mapsto [f]$ of Theorem \ref{thm:derham-cohom}.

\begin{proof}
The commutativity of the diagram (\ref{diagram:comp-frobenius}) can be readily verified from the above explicit definitions of $\comp$ and $F_{\infty}$. That $(H^1(\mathcal{Y}_0(N),V_k)_{\B}, W^{\B}, \comp(F_{\dR}))$ is a $\mathbb{Q}$-mixed Hodge structure follows from a theorem of Zucker \cite{zucker79} (Sections 12 and 13). Finally, that $H_{\cusp}^1(\mathcal{Y}_0(N),V_k)$ is of Hodge type $\{(k+1,0),(0,k+1)\}$ is the content of the classical Eichler--Shimura isomorphism (see also \cite{zucker79} Section 12).
\end{proof}

The pure object $H^1_{\dR,\cusp}(\mathcal{Y}_0(N),\mathcal{V}_k)$ of $\mathcal{H}(\mathbb{Q})$ admits a canonical polarisation
$$
\langle \ , \ \rangle : H^1_{\cusp}(\mathcal{Y}_0(N),V_k) \times H^1_{\cusp}(\mathcal{Y}_0(N),V_k) \to \mathbb{Q}(-k-1)
$$
whose de Rham realisation $\langle \ , \ \rangle_{\dR}$ was defined at the end of Sect. \ref{sec:derham}. The Betti realisation $\langle \ , \ \rangle_{\B}$ is defined in terms of the cup product in the same way; here, we use the $\mathbb{Q}$-linear pairing of local systems $\langle \ , \ \rangle :\mathbb{V} \times \mathbb{V} \to \mathbb{Q}_{\mathcal{Y}_0(N)_{\mathbb{C}}^{\an}}$ whose fibres are induced by the usual intersection pairing on $H_1(E^{\an},\mathbb{Z})$, where $E$ is a complex elliptic curve. Explicitly, over $\mathbb{H}$,  we have
\begin{align}\label{eq:betti-pairing-XY}
\langle X , Y \rangle = -1\text{.}
\end{align}

Let us now consider the de Rham Hermitian form $( \ , \ )_{\dR}$ on $H^1_{\dR,\cusp}(\mathcal{Y}_0(N),\mathcal{V}_k)\tensor_{\mathbb{Q}}\mathbb{C}$ as defined in Sect. \ref{sec:sv-per-pol}. The next proposition shows that its restriction to $S_{k+2}(\Gamma_0(N))$ is, up to an explicit factor, the usual Petersson inner product of cusp forms.

\begin{prop}\label{prop:derham-petersson}
  Consider the injection $S_{k+2}(\Gamma_0(N)) \to H^1_{\dR,\cusp}(\mathcal{Y}_0(N),\mathcal{V}_k)\tensor_{\mathbb{Q}}\mathbb{C}$, $f \mapsto [f]$, of Sect. \ref{sec:derham}. For every $f,g \in S_{k+2}(\Gamma_0(N))$, we have
  $$
( [f] , [g] )_{\dR} = (4\pi)^{k+1}\int_{\Gamma_0(N)\backslash \mathbb{H}}f(\tau)\overline{g(\tau)} y^{k+2}\frac{dx\wedge dy}{y^2}
$$
where $\tau  = x + i y$.
\end{prop}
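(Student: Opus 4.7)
The plan is to reduce the Hermitian form $([f],[g])_{\dR}$ to a Betti cup product via the comparison isomorphism, then compute that cup product concretely by integrating a wedge of smooth representatives on $\mathbb{H}$, and finally simplify the resulting powers of $2\pi i$ to recover the classical Petersson inner product.

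First, I would unpack the definition of $(\ ,\ )_{\dR}$ in Lemma \ref{lemma:derham-hermitian} with $n=k+1$ odd. Combining this with Lemma \ref{lemma:betti-conj} (to trade $\sv \otimes c_{\dR}$ for $\comp^{-1}\circ(\id\otimes c_{\B})\circ \comp$) and the polarisation identity (\ref{eq:comp-polarisation}), the Hermitian form becomes
\[
([f],[g])_{\dR} \;=\; -\,\frac{1}{(2\pi i)^{k+1}}\,\bigl\langle \comp([f]),\,(\id\otimes c_{\B})\bigl(\comp([g])\bigr)\bigr\rangle_{\B}.
\]

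Next, I would produce explicit smooth representatives. By the relative comparison isomorphism (\ref{eq:rel-comp}), $\omega$ corresponds to $2\pi i(X-\tau Y)$, and Kodaira--Spencer gives $\omega^{\otimes 2}\leftrightarrow 2\pi i\,d\tau$; thus the pullback of $\omega_f$ to $\mathbb{H}$ represents $\comp([f])$ as the $\Gamma_0(N)$-invariant $V_k\otimes\mathbb{C}$-valued $1$-form $\widetilde\omega_f = (2\pi i)^{k+1}f(\tau)(X-\tau Y)^k\,d\tau$. Since $c_{\B}$ acts on the $V_k\otimes\mathbb{C}$-fibres by coefficient-wise conjugation (with $X,Y$ real), and since $k+1$ is odd, I would identify the class $(\id\otimes c_{\B})(\comp([g]))$ with $\overline{\widetilde\omega_g}=-(2\pi i)^{k+1}\overline{g(\tau)}(X-\bar\tau Y)^k\,d\bar\tau$.

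Then, using that $f$ is a cusp form (so $\widetilde\omega_f$ decays exponentially at the cusps and lifts to a compactly supported Betti class), the cup product equals the convergent integral
\[
\langle \comp([f]),\,\overline{\comp([g])}\rangle_{\B}\;=\;\int_{\Gamma_0(N)\backslash\mathbb{H}}\widetilde\omega_f\wedge_k\overline{\widetilde\omega_g},
\]
where $\wedge_k$ combines the exterior product with the symmetric pairing $\langle\ ,\ \rangle_k$ on $V_k$. From $\langle X,Y\rangle=-1$ in (\ref{eq:betti-pairing-XY}) one computes $\langle X-\tau Y,X-\bar\tau Y\rangle=-2iy$, so by the symmetric-power formula $\langle(X-\tau Y)^k,(X-\bar\tau Y)^k\rangle_k=(-2iy)^k$; together with $d\tau\wedge d\bar\tau=-2i\,dx\wedge dy$ the integrand becomes a pure constant times $f(\tau)\overline{g(\tau)}y^k\,dx\wedge dy$. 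A direct bookkeeping of the factors of $2$, $\pi$ and $i$ (using that $k+1$ is odd) then collapses the overall constant to $(4\pi)^{k+1}$, yielding the claim.

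The main obstacle is Step~3: justifying rigorously that the Betti cup product of the cuspidal class $\comp([f])$ with an arbitrary class in $H^1_{\dR,\cusp}(\mathcal{Y}_0(N),\mathcal{V}_k)\tensor_{\mathbb{Q}}\mathbb{C}$ is indeed computed by the integral of the $\wedge_k$-product of smooth representatives on $\Gamma_0(N)\backslash\mathbb{H}$. This requires lifting $[f]$ to compactly supported cohomology $H^1_c(\mathcal{Y}_0(N)^{\an}_\mathbb{C},\mathbb{V}_k\tensor\mathbb{C})$ (using exponential decay of $\widetilde\omega_f$ at the cusps) and then invoking the standard de Rham description of the pairing $H^1_c\times H^1\to H^2_c\stackrel{\sim}{\to}\mathbb{C}$ with coefficients in the self-pairing of $\mathbb{V}_k$; all subsequent steps are formal manipulations of explicit smooth forms on $\mathbb{H}$.
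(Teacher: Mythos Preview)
Your proposal is correct and follows essentially the same route as the paper. Both arguments pass to smooth $\mathbb{V}_k$-valued forms on $\Gamma_0(N)\backslash\mathbb{H}$, identify $(\sv\otimes c_{\dR})([g])$ with the complex conjugate form, and reduce the pairing to an integral of a wedge product evaluated via $\langle X-\tau Y, X-\bar\tau Y\rangle = \bar\tau-\tau$; the only cosmetic difference is that the paper keeps the de Rham trivialisation $\omega$ throughout and computes $\langle\omega,\bar\omega\rangle = -4\pi y$ by one application of the relative comparison, whereas you first push everything through $\comp$ to work with $(2\pi i)^{k+1}(X-\tau Y)^k\,d\tau$ directly on the Betti side---the resulting bookkeeping of $2\pi i$ factors is identical.
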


\begin{proof}
  We work in $C^{\infty}$ de Rham cohomology. Note that the real structure coming from Betti cohomology coincides with the real structure coming from real valued $C^{\infty}$ de Rham cohomology, so that, by Lemma \ref{lemma:betti-conj}, $\sv \tensor c_{\dR}$ acts as the usual complex conjugation of complex valued $C^{\infty}$ differential forms.  Since $[f]$ is represented by $f\omega^k\tensor (2\pi i d\tau)$, and similarly for $[g]$, it follows from the construction of the pairing $\langle \ , \ \rangle_{\dR}$ in terms of the de Rham cup product that
  $$
( [f] , [g] )_{\dR} = (-1)^{k+1}\frac{1}{2\pi i}\int_{\Gamma_0(N)\backslash \mathbb{H}}f(\tau)\overline{g(\tau)}\langle \omega^k, \overline{\omega}^k\rangle_k (2\pi i d\tau) \wedge (-2\pi i d \overline{\tau}).
$$
We compute the de Rham pairing $\langle \omega , \overline{\omega}\rangle$ in terms of the Betti pairing on $\mathbb{V}$ by using the relative comparison isomorphism (\ref{eq:rel-comp}) and Eq. (\ref{eq:betti-pairing-XY}):
\begin{align*}
  \langle \omega,\overline{\omega}\rangle = \frac{1}{2\pi i}\langle \comp(\omega), \overline{\comp(\omega}) \rangle   = \frac{1}{2\pi i}\langle 2\pi i (X-\tau Y),-2\pi i (X-\overline \tau Y) \rangle = -2\pi i (\overline{\tau} - \tau) =  -4\pi y
\end{align*}
Thus, $\langle \omega^k,\overline{\omega}^k\rangle_k = (-4\pi y)^k$ and the statement follows.
\end{proof}

\begin{obs}
Although we do not work explicitly with Betti realisations in this paper, let us remark that, under the identification (\ref{eq:group-cohom}), one can relate the Betti pairing $\langle \ , \ \rangle_{\B}$ with a generalisation of Harberland's inner product. Then, the Haberland formula (see \cite{PP13} Theorem 3.2), relating this inner product to the Petersson inner product, follows immediately from the commutativity of (\ref{eq:comm-pol}) and from the above proposition. 
\end{obs}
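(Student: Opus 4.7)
The plan is to first give a group-cohomological description of the Betti pairing $\langle \ , \ \rangle_{\B}$ on $H^1_{\cusp}(\mathcal{Y}_0(N),V_k)$ via the isomorphism (\ref{eq:group-cohom}) and match it with the generalised Haberland inner product of Paşol--Popa on parabolic cohomology $H^1_{\mathrm{par}}(\Gamma_0(N),V_k)$. Once this identification is in hand, the Haberland formula will fall out of (\ref{eq:comm-pol}), Lemma \ref{lemma:betti-conj}, and Proposition \ref{prop:derham-petersson} with no essentially new input.

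For the identification step, I would work (as in Remark \ref{obs:level-struct}) on a finite \'etale cover by a smooth affine scheme $Y$ representing $\mathcal{Y}_0(N)_p$, so that $Y^{\an}_{\mathbb{C}}$ is an honest $K(\Gamma,1)$ for a torsion-free finite-index subgroup $\Gamma \le \Gamma_0(N)$. The cup product on sheaf cohomology of $\mathbb{V}_k$ then coincides with the cup product in group cohomology $H^1(\Gamma,V_k)\times H^1(\Gamma,V_k)\to H^2(\Gamma,\mathbb{Q})$, where the coefficient pairing is the symplectic form on $V_k$ normalised by $\langle X,Y\rangle=-1$ (cf.\ (\ref{eq:betti-pairing-XY})). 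The non-compactness of $Y^{\an}_{\mathbb{C}}$ means that Poincar\'e duality has to be carried out on the parabolic subspace, where the pairing admits a concrete cochain-level representative obtained by inserting a parabolic bounding cochain near each cusp; this is exactly the generalised Haberland pairing described in \cite{PP13}. Taking $\GL_2(\mathbb{F}_p)$-invariants, and thereby descending to $\Gamma_0(N)$, then realises the Betti pairing on $H^1_{\cusp}(\mathcal{Y}_0(N),V_k)$ as Haberland's inner product on $H^1_{\mathrm{par}}(\Gamma_0(N),V_k)$.

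Once this is established, the Haberland formula is a short computation. Applying (\ref{eq:comp-polarisation}) with $n=k+1$ to $\omega = [f]$ and $\eta = (\sv\tensor c_{\dR})([g])$, for $f,g\in S_{k+2}(\Gamma_0(N))$, and using Lemma \ref{lemma:betti-conj} to rewrite $\comp(\eta)=(\id\tensor c_{\B})(\comp([g]))=\overline{[C_g]}$, one obtains
\[
\langle [C_f], \overline{[C_g]}\rangle_{\B} \;=\; (2\pi i)^{k+1}\,\langle [f], (\sv\tensor c_{\dR})([g])\rangle_{\dR} \;=\; (-1)^{k+1}(2\pi i)^{k+1}\,([f],[g])_{\dR},
\]
where the second equality is the definition of $(\ ,\ )_{\dR}$ in Lemma \ref{lemma:derham-hermitian}. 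Proposition \ref{prop:derham-petersson} then rewrites the right-hand side as an explicit multiple of $(f,g)_{\Pet}$, yielding the Haberland-type identity relating the pairing of period cocycles $[C_f],[C_g]$ to the Petersson inner product. The only genuinely non-trivial step, and so the main obstacle, is the first identification: tracking cup products on the affine orbifold $\mathcal{Y}_0(N)^{\an}_{\mathbb{C}}$, pinning down the correct constants and signs coming from the normalisation $X,Y$ of $\mathbb{V}$ over $\mathbb{H}$, and matching with the cochain-level parabolic pairing of \cite{PP13}. Once that bookkeeping is settled, the derivation of the classical Haberland formula, as well as extensions allowing one of the two classes to be non-cuspidal, becomes a mechanical consequence of the formalism of Sections 1 and 2 together with Proposition \ref{prop:derham-petersson}.
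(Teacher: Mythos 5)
Since the statement is a Remark, the paper itself only sketches the argument; your proposal is a faithful and correct expansion of exactly that sketch. The two ingredients you use for the Haberland identity — (\ref{eq:comp-polarisation}) (coming from (\ref{eq:comm-pol})), Lemma \ref{lemma:betti-conj}, Lemma \ref{lemma:derham-hermitian}, and Proposition \ref{prop:derham-petersson} — are precisely those the remark cites, and your computation giving $\langle [C_f],\overline{[C_g]}\rangle_{\B} = (-1)^{k+1}(2\pi i)^{k+1}(4\pi)^{k+1}(f,g)_{\Pet}$ is correct, up to matching the normalisation of the period-polynomial pairing in \cite{PP13}. You are right that the only substantive step not already in the paper is identifying the sheaf-theoretic cup product on $H^1_{\cusp}(\mathcal{Y}_0(N)^{\an}_{\mathbb{C}},\mathbb{V}_k)$ with the Paşol--Popa pairing on parabolic group cohomology; your strategy of passing to the finite \'etale cover $Y$ (whose analytification is a disjoint union of genuine $K(\Gamma',1)$'s with $\Gamma' = \Gamma_0(N)\cap\Gamma(p)$ torsion-free), comparing cup products, treating the cusps via the parabolic subcomplex, and descending by $\GL_2(\FF_p)$-invariance is a sound way to carry this out, with the usual caution about scalar factors coming from the covering degree and from orbifold trace maps.
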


A classical computation relying on the orthonormality of $\{e^{2\pi i n x}\}_{n\in\mathbb{Z}}$ in $L^2([0,1])$ yields (cf. \cite{CS17} Theorem 8.2.3, \cite{rankin77} Theorem 5.7.3):

\begin{coro}\label{coro:derham-poincare}
  Let $m\ge 1$ be an integer, and $f \in S_{k+2}(\Gamma_0(N))$ be a cusp form. Then,
  $$
  ([f],[P_{m,k+2,N}])_{\dR} = \frac{k!}{m^{k+1}}a_m(f)\text{.}
  $$ \hfill $\square$
\end{coro}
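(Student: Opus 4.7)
The plan is to combine Proposition \ref{prop:derham-petersson} with the classical Petersson formula for the inner product of a cusp form against a Poincaré series, reducing the statement to a standard unfolding computation.

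Since $m \ge 1$, Proposition \ref{prop:poincare-fourier}(1) asserts that $P_{m,k+2,N}$ lies in $S_{k+2}(\Gamma_0(N);\mathbb{R})$, so Proposition \ref{prop:derham-petersson} applies and yields
$$
([f], [P_{m,k+2,N}])_{\dR} = (4\pi)^{k+1} \int_{\Gamma_0(N)\backslash\mathbb{H}} f(\tau)\overline{P_{m,k+2,N}(\tau)}\, y^{k+2}\frac{dx\wedge dy}{y^2}.
$$
It therefore suffices to establish the Petersson formula
$$
(f, P_{m,k+2,N})_{\Pet} = \frac{k!}{(4\pi m)^{k+1}} a_m(f),
$$
since multiplying both sides by $(4\pi)^{k+1}$ gives the claim after simplifying $(4\pi)^{k+1}/(4\pi m)^{k+1} = 1/m^{k+1}$.

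For the Petersson formula itself, I would follow the standard unfolding argument. Substitute the defining series (\ref{eq:intro-poincare-def}) of $P_{m,k+2,N}$ into the integral. For $k \ge 2$, the series converges absolutely and uniformly on vertical strips by Proposition \ref{prop:poincare}(1), so one can interchange sum and integral and unfold the sum over $\Gamma_\infty\backslash\Gamma_0(N)$ against a fundamental domain for $\Gamma_0(N)$ to obtain an integral over $\Gamma_\infty\backslash\mathbb{H}$, which one identifies with the strip $\{0\le x<1,\ y>0\}$. Using $\Im(\gamma\cdot\tau)=y/|j(\gamma,\tau)|^2$ together with the modularity of $f$ reduces the integrand to
$$
f(x+iy)\, e^{-2\pi i m x}\, e^{-2\pi m y}\, y^k.
$$
Substituting the Fourier expansion $f=\sum_{n\ge 1}a_n(f)q^n$ and invoking the orthonormality of the characters $e^{2\pi i n x}$ on $[0,1]$ isolates the $n=m$ term, after which a single integration $\int_0^\infty e^{-4\pi m y}y^k\,dy=k!/(4\pi m)^{k+1}$ completes the calculation.

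The main obstacle is the weight-$2$ case ($k=0$), where the series is only conditionally convergent and the unfolding cannot be justified directly. Here one replaces $P_{m,2,N}$ by its regularised version $P_{m,2,N}(\cdot,\epsilon)$ from Proposition \ref{prop:poincare}(2) (for which absolute convergence holds), performs the unfolding and the orthogonality computation for $\epsilon>0$, and then passes to the limit $\epsilon\to 0^+$ via dominated convergence on the resulting $y$-integral; the detailed justification is carried out in \cite{rankin77} \S5.7.
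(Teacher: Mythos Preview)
Your proposal is correct and follows exactly the route the paper indicates: apply Proposition~\ref{prop:derham-petersson} to reduce to the classical Petersson formula, then invoke the standard unfolding argument using the orthonormality of $\{e^{2\pi i n x}\}$ (with the weight~$2$ case handled via Hecke's trick as in \cite{rankin77}~\S5.7). The paper itself does not spell out these details, merely citing \cite{CS17} Theorem~8.2.3 and \cite{rankin77} Theorem~5.7.3, so your write-up is in fact more complete.
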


Let us recall that the above formula implies in particular that the Poincare series $P_{m,k+2,N}$ with $m\ge 1$ generate the (finite-dimensional) vector space $S_{k+2}(\Gamma_0(N))$; this will be used throughout in Sect. \ref{sec:general-case}.

\section{Harmonic Maass forms and the single-valued involution}\label{sec:maass}

Next, we relate the single-valued involution on the objects $H^1(\mathcal{Y}_0(N),V_k)$ of $\mathcal{H}(\mathbb{Q})$ constructed in Sect. \ref{sec:real-modular-motives} with the theory of harmonic Maass forms of integral weight, which we now briefly recall.

The hyperbolic Laplacian of weight $r\in \mathbb{Z}$ is the differential operator on $\mathbb{H}$ defined by
$$
\Delta_r = - y\left(\frac{\partial^2}{\partial x^2} + \frac{\partial^2}{\partial y^2} \right) + iry\left(\frac{\partial}{\partial x} + i\frac{\partial}{\partial y} \right)\text{, }
$$
where $\tau = x+iy$.

\begin{defi}(cf. \cite{BFOR17} 4.1)
  A \emph{harmonic Maass form of manageable growth} of weight $r$ and level $\Gamma_0(N)$ is a $C^{\infty}$ function $F: \mathbb{H} \to \mathbb{C}$ which is modular of weight $r$ for $\Gamma_0(N)$:
  $$
F|_{\gamma,r}=F
$$
for every $\gamma \in \Gamma_0(N)$, harmonic:
$$
\Delta_rF = 0\text{,}
$$
and has `manageable growth' at all cusps: for every $g \in \SL_2(\mathbb{Z})$, there exists $\rho>0$ such that
$$
F|_{g,r} = O(e^{\rho \Im \tau})
$$
as $\Im \tau \rightarrow +\infty$.
\end{defi}

The space of harmonic Maass forms of manageable growth of weight $r$ and level $\Gamma_0(N)$ is denoted by $H_r^!(\Gamma_0(N))$. If the last condition above is replaced by ``for every $g \in \SL_2(\mathbb{Z})$, there exists $P \in \mathbb{C}[q^{-1}]$ and $\rho >0$ such that
$$
F|_{g,r}-P(e^{2\pi i \tau}) = O(e^{-\rho \Im \tau})
$$
as $\Im \tau \rightarrow +\infty$'', then we say that $F$ is a \emph{harmonic Maass form} of weight $r$ and level $\Gamma_0(N)$. The subspace of harmonic Maass forms is denoted by $H_r(\Gamma_0(N))\subset H_r^!(\Gamma_0(N))$; it contains the space $M^!_{r}(\Gamma_0(N))$ of weakly holomorphic modular forms of weight $r$ and level $\Gamma_0(N)$.

The following result summarises the main properties of harmonic Maass forms of manageable growth with respect to the differential operators $\xi_{-k}$ and $D^{k+1}$, defined on a smooth function $F:\mathbb{H} \to \mathbb{C}$ by 
$$
\xi_{-k}(F) = 2i (\Im \tau)^{-k} \overline{\frac{\partial F}{\partial \overline{\tau}}} \text{, }\ \ \ D^{k+1}(F) = \frac{1}{(2\pi i)^{k+1}}\frac{\partial^{k+1}F}{\partial\tau^{k+1}}\text{.}
$$

\begin{theorem}[\cite{BF04}, \cite{BOR08}, \cite{BFOR17} Chapter 5]
  Let $k \ge 0$  be an even integer, and $F \in H_{-k}^!(\Gamma_0(N))$. Then $\xi_{-k}F$ and $D^{k+1}F$ belong to $M_{k+2}^!(\Gamma_0(N))$. Moreover, $F \in H_{-k}(\Gamma_0(N))$ if and only if $\xi_{-k}F \in S_{k+2}(\Gamma_0(N))$. Finally, the following sequence of $\mathbb{C}$-vector spaces
  $$
    \begin{tikzcd}
      0 \arrow{r} & M_{-k}^!(\Gamma_0(N)) \arrow{r} & H_{-k}^!(\Gamma_0(N)) \arrow{r}{\xi_{-k}} &  M_{k+2}^!(\Gamma_0(N)) \arrow{r} & 0
  \end{tikzcd}
   $$
  is exact.
\end{theorem}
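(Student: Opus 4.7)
The strategy is to exploit the canonical decomposition $F = F^+ + F^-$ of a harmonic Maass form into a holomorphic and a non-holomorphic part, together with the complementary behaviour of $\xi_{-k}$ and $D^{k+1}$: the first kills $F^+$, the second kills $F^-$.

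I would first establish the operator identity $\Delta_{-k} = -\xi_{k+2}\circ \xi_{-k}$ by direct computation from the definitions. Since $\Delta_{-k}F = 0$, this gives $\xi_{k+2}(\xi_{-k}F) = 0$, equivalently $\partial(\xi_{-k}F)/\partial\overline{\tau} = 0$, hence $\xi_{-k}F$ is holomorphic on $\mathbb{H}$. Weight $k+2$ modularity is an unproblematic direct check from the weight $-k$ transformation law of $F$ and the definition of $\xi_{-k}$, and manageable growth at cusps translates into meromorphy at cusps, so $\xi_{-k}F \in M^!_{k+2}(\Gamma_0(N))$. For $D^{k+1}F$, modularity of weight $k+2$ follows from Bol's identity $(D^{k+1}G)|_{\gamma,k+2} = D^{k+1}(G|_{\gamma,-k})$, valid for any smooth $G$ on $\mathbb{H}$; holomorphy reduces to the claim that $D^{k+1}$ annihilates the non-holomorphic part $F^-$, which can be verified directly from the local Fourier expansion of $F^-$ at each cusp as a linear combination of $y^{k+1}$ and functions of the shape $\Gamma(k+1,-4\pi n y)\, e^{2\pi i n \tau}$ with $n \in \mathbb{Z}\setminus\{0\}$, each of which is killed by $\partial_{\tau}^{k+1}$.

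For the characterisation of $H_{-k}(\Gamma_0(N))$ inside $H^!_{-k}(\Gamma_0(N))$, I would observe that the local Fourier expansion of $F^-$ at a given cusp has a distinguished constant-in-$q$ term proportional to $y^{k+1}$, and that $\xi_{-k}(y^{k+1})$ is a nonzero constant. The condition $F \in H_{-k}(\Gamma_0(N))$ (rather than $H^!_{-k}$) at the cusp demands the vanishing of this $y^{k+1}$ term; equivalently, it demands the vanishing of the constant Fourier coefficient of $\xi_{-k}F$ at that cusp. Ranging over all cusps, this is exactly the condition $\xi_{-k}F \in S_{k+2}(\Gamma_0(N))$.

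For the exact sequence, the inclusion $M^!_{-k}(\Gamma_0(N))\hookrightarrow H^!_{-k}(\Gamma_0(N))$ is tautological, and if $\xi_{-k}F=0$ then $F$ is holomorphic, weight $-k$ modular, and of manageable growth, so $F \in M^!_{-k}(\Gamma_0(N))$. The substantive step, and the main obstacle, is surjectivity of $\xi_{-k}$. The plan is to produce explicit harmonic lifts of a generating set of $M^!_{k+2}(\Gamma_0(N))$: for every $m\in\mathbb{Z}\setminus\{0\}$, construct a non-holomorphic Poincaré series of weight $-k$ by replacing the exponential seed $e^{2\pi i m\tau}$ in (\ref{eq:intro-poincare-def}) by a Whittaker-type function of $m\tau$ solving $\Delta_{-k}=0$, and verify by a short computation on the seed alone that its image under $\xi_{-k}$ is a nonzero scalar multiple of $P_{m,k+2,N}$; adjoining analogous harmonic lifts of weight $k+2$ Eisenstein series at every cusp then exhausts the image. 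The main technical obstruction here is convergence and modularity of these weight $-k$ Poincaré series for small $k$: they are not absolutely convergent and a Hecke-style analytic continuation in an auxiliary spectral parameter is needed, and one must additionally check that the resulting lifts have manageable growth at every cusp. This is precisely the analytic content developed in the work of Bruinier--Funke and Bringmann--Ono cited in the statement.
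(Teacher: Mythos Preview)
The paper does not give its own proof of this theorem: it is quoted as a result from \cite{BF04}, \cite{BOR08}, and \cite{BFOR17}, and the text moves on immediately to Definition~\ref{defi:betti-conjugate}. So there is nothing in the paper to compare your proposal against. Your outline is essentially the standard argument found in those references, and the overall architecture is sound.

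Two small technical points are worth tightening. First, $\partial_\tau^{k+1}$ does \emph{not} annihilate $y^{k+1}$: since $y=(\tau-\overline\tau)/(2i)$, one gets the nonzero constant $(k+1)!/(2i)^{k+1}$. This does no harm to the conclusion that $D^{k+1}F$ is holomorphic, but your sentence ``each of which is killed by $\partial_\tau^{k+1}$'' is literally false for that term. Second, your characterisation of $H_{-k}$ inside $H_{-k}^!$ via the vanishing of the $y^{k+1}$ term alone is incomplete: the non-holomorphic part $F^-$ in the manageable-growth case can also carry exponentially growing terms $c^-(n)\Gamma(k+1,-4\pi n y)q^n$ with $n>0$, and $F\in H_{-k}$ requires these to vanish as well. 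Under $\xi_{-k}$ these correspond to the \emph{negative} Fourier coefficients of $\xi_{-k}F$ at the cusp, so the full equivalence $F\in H_{-k}\Leftrightarrow \xi_{-k}F\in S_{k+2}$ needs both the vanishing of the constant term (your $y^{k+1}$ observation) and the holomorphy of $\xi_{-k}F$ at every cusp. Finally, for surjectivity onto all of $M^!_{k+2}(\Gamma_0(N))$ (not just $M^{!,\infty}_{k+2}$), you will need Maass--Poincar\'e series attached to every cusp, not only to $\infty$; you mention Eisenstein series at every cusp, but the same is required for the $m\neq 0$ series.
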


To state our next theorem, we introduce the following notion, which will also be used in the following sections.

\begin{defi}\label{defi:betti-conjugate}
  We say that $f,g \in M_{k+2}^!(\Gamma_0(N))$ are \emph{Betti conjugate} if
  $$
  (\sv\tensor c_{\dR})([f]) = [g]
  $$
  in $H^1_{\dR}(\mathcal{Y}_0(N),\mathcal{V}_k)\tensor_{\mathbb{Q}}\mathbb{C}$.
\end{defi}

Our terminology is justified by Lemma \ref{lemma:betti-conj}, which implies that $f$ and $g$ are Betti conjugate if and only if their classes in Betti cohomology are complex conjugate:
$$
(\id \tensor c_{\B})(\comp([f])) = \comp([g])\text{.}
$$

\begin{theorem}\label{thm:maass-forms}
  Let $k\ge 0$ be an even integer, and $f,g \in M_{k+2}^!(\Gamma_0(N))$ be Betti conjugate. Then there exists $F \in H_{-k}^!(\Gamma_0(N))$ such that
  $$
\xi_{-k}F = (4\pi)^{k+1}f\ \ \text{ and }\ \ D^{k+1}F = k! g\text{.}
$$
If $k=0$, then $F$ is unique up to an additive constant; otherwise, $F$ is unique.
\end{theorem}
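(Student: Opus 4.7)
The plan is to produce $F$ by first lifting $f$ through the $\xi_{-k}$-sequence and then correcting by a weakly holomorphic modular form of weight $-k$ to enforce the second identity. By the preceding theorem, the map $\xi_{-k} : H^!_{-k}(\Gamma_0(N)) \to M^!_{k+2}(\Gamma_0(N))$ is surjective with kernel $M^!_{-k}(\Gamma_0(N))$, so I may pick $F_0 \in H^!_{-k}(\Gamma_0(N))$ with $\xi_{-k}(F_0) = (4\pi)^{k+1} f$, uniquely determined up to an element of $M^!_{-k}(\Gamma_0(N))$.

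The heart of the argument is to establish the identity
\[
[D^{k+1} F_0] = k! \cdot (\sv \otimes c_{\dR})([f])
\]
in $H^1_{\dR}(\mathcal{Y}_0(N), \mathcal{V}_k) \otimes_{\mathbb{Q}} \mathbb{C}$. By Lemma \ref{lemma:betti-conj} and formula (\ref{eq:1-cocyle}), this amounts to showing that the Eichler-type cocycle
\[
\gamma \mapsto (2\pi i)^{k+1} \int_{\gamma^{-1}\tau_0}^{\tau_0} D^{k+1} F_0(\tau)\, (X - \tau Y)^k\, d\tau
\]
represents, in group cohomology, $k!$ times the complex conjugate of the Eichler cocycle of $f = \xi_{-k}(F_0)/(4\pi)^{k+1}$. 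The computation proceeds by integrating by parts $k+1$ times: since $(X-\tau Y)^k$ is a degree-$k$ polynomial in $\tau$, its $(k+1)$-st derivative vanishes, and one is left with boundary contributions at $\tau_0$ and $\gamma^{-1}\tau_0$, plus a remainder capturing the failure of $F_0$ to be holomorphic. Stokes' theorem rewrites this remainder as an integral involving $\bar\partial F_0$ against $(X-\tau Y)^k\, d\bar\tau$, and the definition of $\xi_{-k}$ yields $\bar\partial F_0 = \tfrac{1}{2i} y^{-k}\, \overline{\xi_{-k}(F_0)}\, d\bar\tau$. After collecting constants, the boundary contributions form a coboundary in group cohomology (using the weight-$(-k)$ modularity of $F_0$), while the remainder recovers exactly $k!$ times the complex conjugate of the Eichler cocycle of $f$.

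With this identity in hand, the Betti-conjugacy hypothesis $(\sv \otimes c_{\dR})([f]) = [g]$ gives $[D^{k+1} F_0] = k! [g]$ in $H^1_{\dR}(\mathcal{Y}_0(N), \mathcal{V}_k)$. By the exact sequence of Theorem \ref{thm:derham-cohom}, there exists $h \in M^!_{-k}(\Gamma_0(N))$ with $D^{k+1} h = D^{k+1} F_0 - k! g$. Setting $F := F_0 - h$, I get $\xi_{-k}(F) = \xi_{-k}(F_0) = (4\pi)^{k+1} f$ (since $h$ is holomorphic) and $D^{k+1} F = k! g$, proving existence.

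For uniqueness, the difference $F - F'$ of two solutions lies in $\ker(\xi_{-k}) \cap \ker(D^{k+1})$; the first kernel is $M^!_{-k}(\Gamma_0(N))$ by the preceding theorem, and for $h = \sum a_n q^n \in M^!_{-k}(\Gamma_0(N))$ the condition $D^{k+1} h = \sum n^{k+1} a_n q^n = 0$ forces $a_n = 0$ for $n \neq 0$, so $h$ is constant. A nonzero constant is a weakly holomorphic modular form of weight $-k$ exactly when $k = 0$, matching the stated uniqueness. The main technical obstacle is the integration-by-parts calculation in the second paragraph, where the delicate bookkeeping concerns the correct tracking of the constants $k!$ and $(4\pi)^{k+1}$ and the verification that the boundary cochains are coboundaries; everything else is a formal consequence of the exact sequences already at our disposal.
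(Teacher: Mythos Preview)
Your approach is correct but takes a genuinely different route from the paper. The paper constructs $F$ directly and geometrically: since $f$ and $g$ are Betti conjugate, the $C^{\infty}$ $\mathcal{V}_k$-valued $1$-form $\omega_g - \overline{\omega_f}$ is exact, so there is a smooth section $A$ of $\mathcal{V}_k^{\an}$ with $\nabla A = \omega_g - \overline{\omega_f}$; expanding $A = \sum A_{r,s}\omega^r\overline{\omega}^s$ and setting $F = (-4\pi i \Im\tau)^k A_{0,k}$, the identities $D^{k+1}F = k!g$ and $\xi_{-k}F = (4\pi)^{k+1}f$ follow from short computations with the Gauss--Manin connection and the raising operators $\partial_r$. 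You instead invoke the surjectivity of $\xi_{-k}$ from the preceding theorem to get a preliminary lift $F_0$, then prove directly (via Eichler cocycles and integration by parts) that $[D^{k+1}F_0] = k!(\sv\otimes c_{\dR})[f]$, and finally correct by an element of $M^!_{-k}$. In effect you prove Corollary~\ref{coro:maass-forms} first and deduce the theorem, whereas the paper does the reverse. Your method is more elementary, avoiding the vector-bundle machinery and staying at the level of cocycles; the paper's method is more self-contained (it does not need the surjectivity of $\xi_{-k}$) and produces the harmonic lift explicitly. Your integration-by-parts sketch is sound: with $R = \sum_{j=0}^k(-1)^j\partial_\tau^{k-j}F_0\cdot\tfrac{d^j}{d\tau^j}(X-\tau Y)^k$ one has $dR = \partial_\tau^{k+1}F_0\,(X-\tau Y)^k\,d\tau + \partial_{\bar\tau}R\,d\bar\tau$, and a binomial identity using $\partial_{\bar\tau}F_0 = -\tfrac{1}{2i}y^k\overline{\xi_{-k}F_0}$ collapses $\partial_{\bar\tau}R$ to $-\tfrac{(4\pi)^{k+1}k!}{(2i)^{k+1}}\bar f\,(X-\bar\tau Y)^k$; integrating along a path gives $C_{D^{k+1}F_0}(\gamma) = k!\,\overline{C_f(\gamma)} + R(\tau_0) - R(\gamma^{-1}\tau_0)$, and the weight-$(-k)$ modularity of $F_0$ makes the last expression a coboundary.
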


This theorem is essentially equivalent to the results of Brown \cite{brown18} when $N=1$, and of Candelori \cite{candelori14} when $N\ge 5$ and $f$ is a cusp form. We prove it below for completeness; our approach is similar to Candelori's.

\begin{proof}
  Recall that working over $\mathcal{Y}_0(N)_{\mathbb{C}}^{\an}$ amounts to working over $\mathbb{H}$ equivariantly under the left action of $\Gamma_0(N)$. This will be implicit in what follows.

  After the Kodaira--Spencer identification $\mathcal{F}^{\tensor 2} \cong \Omega^1_{\mathcal{Y}_0(N)/\mathbb{Q}}$, we have
  $$
\omega_f = f\omega^k \tensor 2\pi i\, d\tau \in H^0(\mathcal{Y}_0(N)_{\mathbb{C}}^{\an}, \mathcal{V}^{\an}_k\tensor \Omega^1_{\mathcal{Y}_0(N)_{\mathbb{C}}^{\an}})\text{,}
$$
and similarly for $\omega_g$. As the real structure on the complex analytic de Rham cohomology $H^1_{\dR}(\mathcal{Y}_0(N)_{\mathbb{C}}^{\an},\mathcal{V}_{k}^{\an})$ coming from real valued $C^{\infty}$ de Rham cohomology  coincides with the real structure given by the Betti cohomology $H^1(\mathcal{Y}_0(N)_{\mathbb{C}}^{\an},\mathbb{V}_k)\tensor_{\mathbb{Q}} \mathbb{R}$ after the natural comparison isomorphisms (cf. proof of Proposition \ref{prop:derham-petersson}), the image of the $C^{\infty}$ differential form with coefficients in $\mathcal{V}^{\an}_k$
$$
\omega_{g}-\overline{\omega_f} = g\omega^k\tensor 2\pi i\, d\tau + \overline{f}\overline{\omega}^k\tensor 2\pi i\, d\overline{\tau}
$$
in $H^1_{\dR}(\mathcal{Y}_0(N)_{\mathbb{C}}^{\an}, \mathcal{V}_k^{\an})$ is zero by Lemma \ref{lemma:betti-conj}. Thus, there exists a $C^{\infty}$ global section $A$ of $\mathcal{V}_k^{\an}$ satisfying
$$
\nabla A =  \omega_{g}-\overline{\omega_f} \text{,}
$$
where $\nabla$ denotes the Gauss--Manin connection on $\mathcal{V}_k^{\an}$.  Since $\omega$ and $\overline{\omega}$ trivialise $\mathcal{V}^{\an}$ over $\mathbb{H}$, we can uniquely write
$$
A = \sum_{r+s=k}A_{r,s} \omega^r \overline{\omega}^s\text{,}
$$
where $A_{r,s}$ are $C^{\infty}$ functions on $\mathbb{H}$. We set
$$
F \defeq (-4\pi i \Im \tau )^k A_{0,k}\text{.}
$$

To check that $F$ satisfies $\xi_{-k}F = (4\pi)^{k+1}f$ and $D^{k+1}F=k!g$, it is convenient to use Brown's terminology and notation in \cite{brown18}. Note first that, by $\Gamma_0(N)$-invariance of $A$, the $C^{\infty}$ functions $A_{r,s}$ are modular of weights $(r,s)$ for $\Gamma_0(N)$. In particular, since
$$
2\mathbb{L} \defeq -4\pi \Im \tau
$$
is modular of weights $(-1,-1)$, the function $F$ is modular of weights $(-k,0)$.

By definition of $A$, we have
\begin{align}\label{eq:nablaDA}
\nabla_DA = g\omega^k\text{.}
\end{align}
Using the formulas
$$
\nabla_D\omega = \frac{\omega + \overline{\omega}}{2\mathbb{L}}\text{, } \ \ \ \nabla_{D}\overline{\omega} = 0\text{,}
$$
which can be checked by computing the periods along the 1-cycles $\sigma_1$ and $\sigma_\tau$, we obtain from (\ref{eq:nablaDA}) the equations
$$
\partial_k A_{k,0} = 2\mathbb{L}g\ \ \ \text{ and }\ \ \ \partial_rA_{r,k-r} + (r+1)A_{r+1,k-r-1} = 0\text{, }\qquad 0\le r < k\text{,}
$$
where $\partial_r \defeq 2\mathbb{L}D + r$. Thus, (\cite{brown18} Lemmas 3.2 and 3.3)
$$
D^{k+1}F = \frac{1}{(2\mathbb{L})^{k+1}}\partial_0 \partial_{-1} \cdots \partial_{-k}F = \partial_k \partial_{k-1}\cdots \partial_0 A_{0,k} = (-1)^kk!g = k!g\text{.}
$$
Using that
$$
\nabla_{\overline{D}}A = -\overline{f} \overline{\omega}^k
$$
we conclude similarly that $\xi_{-k}F = (4\pi)^{k+1}f$.

Finally, we check that $F$ is a harmonic Maass form of manageable growth of weight $-k$ and level $\Gamma_0(N)$. The modularity property has already been remarked above. That $F$ is harmonic follows from the formula $\Delta_{-k} = -\partial_{-k-1}\overline{\partial}_0$ and from the fact that $\xi_{-k}F = (4\pi)^{k+1}f$ is holomorphic. The growth condition at the cusps follows from those of $f$ and $g$, and from the equations $D^{k+1}F = k! g$ and $\xi_{-k}F = (4\pi)^{k+1}f$; this can be seen either via the Fourier expansion of $F$ as in \cite{BF04} (Section 3), or via an explicit expression of $F$ in terms of $f$ and $g$ as in \cite{brown18} (Section 5.3).
\end{proof}

\begin{coro}\label{coro:maass-forms}
  The following diagram of $\mathbb{C}$-vector spaces commutes:
  $$
  \begin{tikzcd}[column sep=small]
    & H_{-k}^!(\Gamma_0(N))\arrow{ld}[swap]{\frac{1}{(4\pi)^{k+1}}\xi_{-k}}\arrow{rd}{\frac{1}{k!}D^{k+1}} &\\
    M_{k+2}^!(\Gamma_0(N)) \arrow{d} &  & M_{k+2}^!(\Gamma_0(N)) \arrow{d}\\
    H^1_{\dR}(\mathcal{Y}_0(N),\mathcal{V}_k)\tensor_{\mathbb{Q}} \mathbb{C} \arrow{rr}[swap]{\sv\tensor c_{\dR}} & & H^1_{\dR}(\mathcal{Y}_0(N),\mathcal{V}_k)\tensor_{\mathbb{Q}} \mathbb{C}
  \end{tikzcd}
  $$
  where the vertical arrows are the quotient maps $f\mapsto [f]$ of Theorem \ref{thm:derham-cohom}. \hfill $\square$
\end{coro}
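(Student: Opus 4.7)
The plan is to obtain this corollary as a direct reformulation of Theorem \ref{thm:maass-forms}, combined with the exact sequence
$0 \to M_{-k}^!(\Gamma_0(N)) \to H_{-k}^!(\Gamma_0(N)) \xrightarrow{\xi_{-k}} M_{k+2}^!(\Gamma_0(N)) \to 0$
and with the surjectivity of $M_{k+2}^!(\Gamma_0(N)) \to H^1_{\dR}(\mathcal{Y}_0(N),\mathcal{V}_k)\otimes_{\mathbb{Q}}\mathbb{C}$ provided by Theorem \ref{thm:derham-cohom} (together with Corollary \ref{coro:field-definition} to extend scalars to $\mathbb{C}$). Fixing $F\in H_{-k}^!(\Gamma_0(N))$ and writing $f=\frac{1}{(4\pi)^{k+1}}\xi_{-k}F$ and $g=\frac{1}{k!}D^{k+1}F$, both of which lie in $M_{k+2}^!(\Gamma_0(N))$ by the theorem preceding Theorem \ref{thm:maass-forms}, the commutativity asserted by the diagram amounts to showing that $f$ and $g$ are Betti conjugate in the sense of Definition \ref{defi:betti-conjugate}.

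First I would invoke the aforementioned surjectivity to choose some $g'\in M_{k+2}^!(\Gamma_0(N))$ whose class in $H^1_{\dR}(\mathcal{Y}_0(N),\mathcal{V}_k)\otimes_{\mathbb{Q}}\mathbb{C}$ is $(\sv\otimes c_{\dR})([f])$; by construction $f$ and $g'$ are Betti conjugate. Applying Theorem \ref{thm:maass-forms} to the pair $(f,g')$ then produces $F'\in H_{-k}^!(\Gamma_0(N))$ satisfying $\xi_{-k}F'=(4\pi)^{k+1}f$ and $D^{k+1}F'=k!g'$, so that $F$ and $F'$ have the same image under $\xi_{-k}$.

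Next I would exploit the exact sequence above: since $\xi_{-k}(F-F')=0$, the difference $F-F'$ belongs to $M_{-k}^!(\Gamma_0(N))$. Therefore $k!(g-g')=D^{k+1}(F-F')$ lies in the image of the Bol operator $D^{k+1}:M_{-k}^!(\Gamma_0(N))\to M_{k+2}^!(\Gamma_0(N))$, which by Theorem \ref{thm:derham-cohom} is precisely the kernel of the quotient $M_{k+2}^!(\Gamma_0(N))\to H^1_{\dR}(\mathcal{Y}_0(N),\mathcal{V}_k)\otimes_{\mathbb{Q}}\mathbb{C}$. Consequently $[g]=[g']=(\sv\otimes c_{\dR})([f])$, and the diagram commutes.

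There is really no obstacle here beyond Theorem \ref{thm:maass-forms} itself; the corollary is essentially a repackaging of it once one notes that, given $f$, the existence of some weakly holomorphic Betti conjugate $g'$ is guaranteed by the surjectivity in Theorem \ref{thm:derham-cohom}, and that the only ambiguity in inverting $\xi_{-k}$ is absorbed by the $D^{k+1}$-image in weight $k+2$ (which is killed when passing to de Rham cohomology).
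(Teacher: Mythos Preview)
Your proposal is correct and matches the paper's intent: the corollary is marked with $\square$ (no proof given), indicating it is an immediate consequence of Theorem \ref{thm:maass-forms}, and your argument supplies exactly the missing routine details by combining that theorem with the $\xi_{-k}$ exact sequence and the surjectivity from Theorem \ref{thm:derham-cohom}.
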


\begin{obs}\label{obs:regularized}
  In particular, we obtain a formula for the de Rham pairing $( \ , \ )_{\dR}$ defined in Lemma \ref{lemma:derham-hermitian} in terms of harmonic lifts.  Namely, let $f,g \in M_{k+2}^{!,\infty}(\Gamma_0(N))$, and consider any $G \in H^{!}_{-k}(\Gamma_0(N))$ such that $\xi_{-k}G = (4\pi)^{k+1}g$. Then, by (\ref{eq:derham-cup})
  \begin{align*}
  ([f],[g])_{\dR} &= -\langle [f], (\sv \tensor c_{\dR})([g])\rangle_{\dR} \\
  &= -\frac{1}{k!}\langle [f], [D^{k+1}G]\rangle_{\dR} = \sum_{n\in \mathbb{Z}}\frac{a_n(f)a_{-n}(D^{k+1}G)}{n^{k+1}}= \sum_{n\in \mathbb{Z}}a_n(f)a_{-n}(G^+)
  \end{align*}
  where $G^+$ is the holomorphic part of $G$ (see \cite{BFOR17} Section 4.2). This provides a natural cohomological interpretation for the `regularised Petersson inner product' (cf. \cite{BDE17} Theorem 1.1).
\end{obs}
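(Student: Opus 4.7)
The plan is to unwind the identity in three successive steps, each corresponding to a result already established in the paper. First, by Lemma \ref{lemma:derham-hermitian} applied with $n=k+1$, and noting that $(-1)^{k+1}=-1$ since $k$ is even,
$$([f],[g])_{\dR} \;=\; -\langle [f],(\sv\tensor c_{\dR})([g])\rangle_{\dR}.$$
Next, Corollary \ref{coro:maass-forms} identifies $(\sv\tensor c_{\dR})([g])$ with $\tfrac{1}{k!}[D^{k+1}G]$ for any $G \in H_{-k}^!(\Gamma_0(N))$ satisfying $\xi_{-k}G=(4\pi)^{k+1}g$, so substituting gives
$$([f],[g])_{\dR} \;=\; -\tfrac{1}{k!}\langle [f],[D^{k+1}G]\rangle_{\dR}.$$

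The second step is to expand this cup product via the explicit residue formula (\ref{eq:derham-cup}). This formula applies because $f\in S_{k+2}^{!,\infty}(\Gamma_0(N))$ is holomorphic outside $\infty$, so the residue computation localises at the cusp $\infty$ (as discussed just before (\ref{eq:derham-cup})), and $D^{k+1}G$ is weakly holomorphic of weight $k+2$ by the theorem recalled in Section \ref{sec:maass}. We obtain
$$([f],[g])_{\dR} \;=\; -\sum_{n\in\mathbb{Z}}\frac{a_n(f)\,a_{-n}(D^{k+1}G)}{n^{k+1}},$$
the sum being finite because both modular forms involved are weakly holomorphic.

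The final step trades $D^{k+1}G$ for the holomorphic part $G^+$. Writing $G=G^++G^-$ in the standard decomposition of a harmonic Maass form, the crucial input is the vanishing $D^{k+1}G^-=0$. This follows from the explicit shape $G^-(\tau)=\sum_{n<0}c^-(n)\,\Gamma(k+1,4\pi|n|y)\,q^n$ of the non-holomorphic part of a weight $-k$ harmonic Maass form (see \cite{BFOR17} Chapter 5); it is the classical duality statement that $\xi_{-k}$ and $D^{k+1}$ single out, respectively, the non-holomorphic and holomorphic halves of $G$. Therefore $D^{k+1}G=D^{k+1}G^+$, and since $D=q\,d/dq$ multiplies the $n$th Fourier coefficient by $n$,
$$a_{-n}(D^{k+1}G)\;=\;(-n)^{k+1}a_{-n}(G^+)\;=\;-n^{k+1}\,a_{-n}(G^+),$$
using that $k+1$ is odd. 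Plugging this back into the previous display, the two minus signs cancel and one obtains the asserted identity $([f],[g])_{\dR}=\sum_{n\in\mathbb{Z}}a_n(f)\,a_{-n}(G^+)$.

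The only non-routine ingredient is the vanishing $D^{k+1}G^-=0$, which is the standard dichotomy at the heart of the theory of harmonic Maass forms; everything else is a matter of sign bookkeeping, which works out precisely because two factors of $(-1)^{k+1}$ — one from the definition of $(\ ,\ )_{\dR}$ in Lemma \ref{lemma:derham-hermitian}, the other from the ratio $(-n)^{k+1}/n^{k+1}$ — combine to give $+1$.
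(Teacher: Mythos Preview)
Your argument is correct and follows the same route as the paper's own derivation, which is simply the chain of equalities displayed in the remark: unwind the definition of $(\ ,\ )_{\dR}$, invoke Corollary~\ref{coro:maass-forms}, apply the residue formula~(\ref{eq:derham-cup}), and use $D^{k+1}G=D^{k+1}G^+$. Your version is more explicit about the sign bookkeeping; in fact your intermediate expression $-\sum_n a_n(f)a_{-n}(D^{k+1}G)/n^{k+1}$ carries the minus sign that the paper's displayed chain omits, and your tracking of the two factors of $(-1)^{k+1}$ confirms that the \emph{final} identity is correct either way.
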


As an application of Corollary \ref{coro:maass-forms}, we have the following result on Poincaré series.

\begin{prop}\label{prop:poincare-betti-conjugate}
Let $k\ge 0$ be an even integer. For every integer $m>0$, the Poincaré series $P_{m,k+2,N}$ and $-P_{-m,k+2,N}$ are Betti conjugate.
\end{prop}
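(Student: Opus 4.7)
The plan is to apply Corollary \ref{coro:maass-forms} by exhibiting a simultaneous harmonic preimage. By Definition \ref{defi:betti-conjugate}, the claim amounts to proving
$$(\sv \otimes c_{\dR})([P_{m,k+2,N}]) = [-P_{-m,k+2,N}]$$
in $H^1_{\dR}(\mathcal{Y}_0(N),\mathcal{V}_k) \otimes_{\mathbb{Q}} \mathbb{C}$. The commutative diagram in Corollary \ref{coro:maass-forms} reduces this to producing $F \in H^!_{-k}(\Gamma_0(N))$ (in fact in $H_{-k}(\Gamma_0(N))$, since its $\xi_{-k}$-image is to be a cusp form) satisfying
$$\xi_{-k}(F) = (4\pi)^{k+1} P_{m,k+2,N} \qquad \text{and} \qquad D^{k+1}(F) = -k!\, P_{-m,k+2,N}.$$

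The required $F$ is provided by Bringmann and Ono's construction of Maass--Poincaré series in \cite{BO07}. For each positive integer $m$, they form a harmonic Maass form $\mathcal{F}_m \in H_{-k}(\Gamma_0(N))$ of weight $-k$ by averaging an explicit Whittaker-function seed attached to the cusp $\infty$ over $\Gamma_\infty \backslash \Gamma_0(N)$. The seed is engineered so that $\xi_{-k}(\mathcal{F}_m)$ is a nonzero explicit multiple of $P_{m,k+2,N}$, and so that the holomorphic part $\mathcal{F}_m^+$ has principal part a specific multiple of $q^{-m}$ at $\infty$ with no principal parts at the other cusps (the averaging being lifted from $\infty$ only). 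A standard property of harmonic Maass forms of weight $-k$ is that the Bol operator $D^{k+1}$ annihilates the non-holomorphic part, so $D^{k+1}(\mathcal{F}_m) = D^{k+1}(\mathcal{F}_m^+)$ is weakly holomorphic of weight $k+2$; comparing its Fourier expansion (given by the Bringmann--Ono Kloosterman-sum/Bessel formulas) with the one for $P_{-m,k+2,N}$ in Proposition \ref{prop:poincare-fourier} identifies $D^{k+1}(\mathcal{F}_m)$ as another explicit nonzero multiple of $P_{-m,k+2,N}$. Rescaling $\mathcal{F}_m$ so that $\xi_{-k}(\mathcal{F}_m) = (4\pi)^{k+1} P_{m,k+2,N}$ simultaneously fixes the constant on the $D^{k+1}$ side, and a direct bookkeeping of signs and Gamma-factors yields exactly $-k!$ as the coefficient of $P_{-m,k+2,N}$.

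The main obstacle is the careful tracking of normalization constants across the various conventions (Poincaré averaging, Whittaker seeds, $\xi_{-k}$, and $D^{k+1}$), in particular the sign. Once the Fourier expansions are written out, the identification is routine, since an element of $S^{!,\infty}_{k+2}(\Gamma_0(N))$ is determined by its $q$-expansion at $\infty$. The borderline case $k=0$ (weight $2$) uses Hecke's analytic continuation from Proposition \ref{prop:poincare}(2) to define $P_{m,2,N}$ and $P_{-m,2,N}$, which is compatible with the Bringmann--Ono construction by the same averaging argument applied at the critical value of the spectral parameter.
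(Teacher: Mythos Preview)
Your proposal is correct and follows essentially the same approach as the paper: both invoke the Bringmann--Ono Maass--Poincaré series $Q_{-m,k+2,N}$ as the harmonic preimage and then apply Corollary~\ref{coro:maass-forms}. The paper simply quotes the two identities $\xi_{-k}Q_{-m,k+2,N} = \frac{(4\pi)^{k+1}m^{k+1}}{k!}P_{m,k+2,N}$ and $D^{k+1}Q_{-m,k+2,N} = -m^{k+1}P_{-m,k+2,N}$ from \cite{BO07} (see also \cite{BOR08}) as a black box and rescales by $\frac{k!}{m^{k+1}}$, whereas you sketch how one would verify the second identity by matching Fourier expansions; this is the same argument with a bit more of the Bringmann--Ono computation unpacked.
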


Actually, since Poincaré series have real Fourier coefficients at infinity (Proposition \ref{prop:poincare}), we have
  $$
\sv([P_{m,k+2,N}]) = - [P_{-m,k+2,N}]
$$
in $H^1_{\dR}(\mathcal{Y}_0(N), \mathcal{V}_k)\tensor_{\mathbb{Q}}\mathbb{R}$. We remark that, in level 1, this is consistent with the action of $\sv$ on Eisenstein series in \cite{brown18}, which corresponds to the case $m=0$ in our notation.

\begin{proof}
We use the following result of Bringmann and Ono \cite{BO07} (see also \cite{BOR08}): there exists a harmonic Maass form $Q_{-m,k+2,N} \in H^!_{-k}(\Gamma_0(N))$, the so-called \emph{Maass--Poincaré series}, satisfying
$$
\xi_{-k}Q_{-m,k+2,N}= \frac{(4\pi)^{k+1}m^{k+1}}{k!}P_{m,k+2,N}\ \ \ \text{ and }\ \ \ D^{k+1}Q_{-m,k+2,N} = -m^{k+1}P_{-m,k+2,N}\text{.}
$$
By considering $\frac{k!}{m^{k+1}}Q_{-m,k+2,N}\in H^!_{-k}(\Gamma_0(N))$, our statement follows immediately from Corollary \ref{coro:maass-forms}.
\end{proof}

\section{Coefficients of Poincaré series as single-valued periods: rank 2 case}\label{sec:rank2}

Let $k\ge 0$ and $N\ge 1$ be integers such that $\dim S_{k+2}(\Gamma_0(N))=1$, so that
$$
\dim H^1_{\dR,\cusp}(\mathcal{Y}_0(N),\mathcal{V}_k) = 2\text{.}
$$
This only happens in the finite number of cases given by the tables below (see \cite{LMFDB}):

\vspace{0.2cm}

\begin{center}
 \begin{tabular}{|c | c|} 
   \hline  
 $N$ & $k+2$  \\ [0.1ex] 
 \hline\hline
 1 & 12, 16, 18, 20, 22, 26  \\ 
 \hline
 2 & 8, 10   \\
 \hline
 3 & 6, 8   \\
 \hline
 4 & 6   \\
 \hline
   5 & 4, 6  \\
   \hline
   6 & 4  \\
   \hline
   7 & 4  \\   
 \hline
\end{tabular} \ \ \ \begin{tabular}{|c | c| } 
   \hline  
 $N$ & $k+2$  \\ [0.1ex] 
                       \hline\hline
                         8 & 4 \\
   \hline
                       9 & 4 (CM) \\
                       \hline
 11 & 2  \\ 
 \hline
 14 & 2  \\
 \hline
 15 & 2  \\
 \hline
 17 & 2  \\
 \hline
   19 & 2 \\
   \hline
                     \end{tabular}\ \ \ 
 \begin{tabular}{|c | c|} 
   \hline  
 $N$ & $k+2$  \\ [0.1ex] 
                       \hline\hline
                      20 & 2 \\
   \hline
   21 & 2 \\
   \hline
  24 & 2 \\
   \hline
   27 & 2 (CM) \\   
   \hline
   32 & 2 (CM) \\   
   \hline
   36 & 2 (CM) \\   
   \hline
   49 & 2 (CM) \\   
   \hline
 \end{tabular}
\end{center}

\vspace{0.2cm}

We now proceed to a proof of our main theorem relating single-valued periods to coefficients of Poincaré series in this particular case. This serves as an illustration of our proof method for the general case.

Let $f \in S_{k+2}(\Gamma_0(N);\mathbb{Q})$ and $g \in S^{!,\infty}_{k+2}(\Gamma_0(N);\mathbb{Q})$ be such that $([f],[g])$ is a $\mathbb{Q}$-basis of $H^1_{\dR,\cusp}(\mathcal{Y}_0(N),\mathcal{V}_k)$  satisfying
$$
\langle [f], [g] \rangle_{\dR} = 1\text{,}
$$
and write
$$
\sv (\begin{array}{cc} [f] & [g] \end{array}) = (\begin{array}{cc} [f] & [g] \end{array})\cdot \left(\begin{array}{cc} a & b \\ c & d \end{array} \right) 
$$
for some $a,b,c,d \in \mathbb{R}$. These are the single-valued periods of $H^1_{\cusp}(\mathcal{Y}_0(N),V_k)$; recall from Proposition \ref{prop:svp-matrix-polarisation} that $c\neq 0$, $ad-bc=-1$, and $d=-a$. Moreover, we have
$$
([g],[f])_{\dR} = a\ \ \ \text{ and }\ \ \  ([f],[f])_{\dR} = -c\text{,}
$$
where $( \ , \ )_{\dR}$ is the de Rham Hermitian form on $H^1_{\dR,\cusp}(\mathcal{Y}_0(N),\mathcal{V}_k)\tensor_{\mathbb{Q}}\mathbb{C}$, defined in Lemma \ref{lemma:derham-hermitian} by $(\omega,\eta)_{\dR} = -\langle \omega, (\sv\tensor c_{\dR})(\eta)\rangle_{\dR}$; see formulas (\ref{eq:dR-form-sv-period}) and (\ref{eq:dR-form-sv-period2}). Set
$$
\rho \defeq \frac{a}{c} = -\frac{([g],[f])_{\dR}}{([f],[f])_{\dR}} \in \mathbb{R}\text{.}
$$

From now on, for simplicity, we denote
$$
P_{m}\defeq P_{m,k+2,N} \in S_{k+2}^{!,\infty}(\Gamma_0(N);\mathbb{Q})\text{.}
$$

\begin{prop}\label{prop:sv-periods-rank-2}
  For every positive integer $m$, there exists $h_m \in M_{-k}^{!,\infty}(\Gamma_0(N);\mathbb{Q})$, depending on $f$ and $g$, such that, for every integer $n\ge 1$,
  $$
a_n(P_m) = -\frac{k!}{m^{k+1}}a_m(f)a_n(f)c^{-1}\ \ \text{ and }\ \ a_n(P_{-m}) =  \frac{k!}{m^{k+1}}a_m(f)a_n(f)\rho + r_{m,n} \text{,}
$$
where
$$
r_{m,n} = \frac{k!}{m^{k+1}}a_m(f)a_n(g) + n^{k+1}a_n(h_m) \in \mathbb{Q}\text{.}
$$
In particular, $P_m$ vanishes identically if and only if $a_m(f)=0$, and in this case $a_n(P_{-m})\in \mathbb{Q}$ for every $n\ge 0$.
\end{prop}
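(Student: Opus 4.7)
The plan is to identify the de Rham classes $[P_{\pm m}]\in H^1_{\dR,\cusp}(\mathcal{Y}_0(N),\mathcal{V}_k)\tensor\mathbb{R}$ in the basis $([f],[g])$, and then lift these cohomological identities to identities of Fourier expansions via the exact sequence of Corollary \ref{coro:derahm-whmf}. Since $S_{k+2}(\Gamma_0(N))$ is one-dimensional and $P_m\in S_{k+2}(\Gamma_0(N);\mathbb{R})$ by Proposition \ref{prop:poincare-fourier}, we have $P_m=\alpha_m f$ for a unique $\alpha_m\in\mathbb{R}$. The value of $\alpha_m$ is pinned down by combining Corollary \ref{coro:derham-poincare}, which gives $([f],[P_m])_{\dR}=\frac{k!}{m^{k+1}}a_m(f)$, with the identity $([f],[f])_{\dR}=(-1)^{k+1}c=-c$ from \eqref{eq:dR-form-sv-period} (recall $k$ is even); since $\alpha_m$ is real and $(\cdot,\cdot)_{\dR}$ is Hermitian, one has $([f],[P_m])_{\dR}=\alpha_m([f],[f])_{\dR}=-\alpha_m c$, giving $\alpha_m=-\frac{k!}{m^{k+1}c}a_m(f)$. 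This yields the formula for $a_n(P_m)$.

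For $P_{-m}$, apply $\sv$ to the identity $[P_m]=\alpha_m[f]$: Proposition \ref{prop:poincare-betti-conjugate} combined with $\sv([f])=a[f]+c[g]$ yields
$$
[P_{-m}]=-\sv([P_m])=\frac{k!\,a_m(f)}{m^{k+1}}\bigl(\rho[f]+[g]\bigr)\in H^1_{\dR,\cusp}(\mathcal{Y}_0(N),\mathcal{V}_k)\tensor\mathbb{R}.
$$
By the $\mathbb{R}$-scalar extension of the exact sequence in Corollary \ref{coro:derahm-whmf}, some $H\in M^{!,\infty}_{-k}(\Gamma_0(N);\mathbb{R})$ satisfies
$$
D^{k+1}H=P_{-m}-\frac{k!\,a_m(f)\,\rho}{m^{k+1}}f-\frac{k!\,a_m(f)}{m^{k+1}}g.
$$
The main obstacle is to replace this a priori real $H$ by a rational $h_m\in M^{!,\infty}_{-k}(\Gamma_0(N);\mathbb{Q})$, since the factor $\rho$ is typically irrational and $H$ itself need not be rational.

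The key observation is that the principal part at infinity of the displayed right-hand side lies in $\mathbb{Q}[q^{-1}]$: $f$ contributes nothing, $g\in S^{!,\infty}_{k+2}(\Gamma_0(N);\mathbb{Q})$ has rational principal part, and $\mathcal{P}_{P_{-m}}=q^{-m}$ by Proposition \ref{prop:poincare-fourier}. Consider then the $\mathbb{Q}$-linear map
$$
\pi\colon M^{!,\infty}_{-k}(\Gamma_0(N);\mathbb{Q})\to q^{-1}\mathbb{Q}[q^{-1}],\qquad h\mapsto \mathcal{P}_{D^{k+1}h}=\sum_{n<0}n^{k+1}a_n(h)q^n.
$$
By Corollary \ref{coro:field-definition}, its $\mathbb{R}$-scalar extension has image $\im\pi\tensor_{\mathbb{Q}}\mathbb{R}$, which contains $\mathcal{P}_{D^{k+1}H}$. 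Using that any $\mathbb{Q}$-subspace $V$ of a $\mathbb{Q}$-vector space $W$ satisfies $(V\tensor_{\mathbb{Q}}\mathbb{R})\cap W=V$, the rationality of $\mathcal{P}_{D^{k+1}H}$ forces it to lie in $\im\pi$; so pick $h_m\in M^{!,\infty}_{-k}(\Gamma_0(N);\mathbb{Q})$ with $\pi(h_m)=\mathcal{P}_{D^{k+1}H}$. Then $D^{k+1}(h_m-H)\in S^{!,\infty}_{k+2}(\Gamma_0(N);\mathbb{R})$ has trivial principal part and is thus a holomorphic cusp form $\lambda f\in S_{k+2}(\Gamma_0(N);\mathbb{R})=\mathbb{R}\cdot f$; its cohomology class $\lambda[f]=[D^{k+1}(h_m-H)]$ vanishes by Corollary \ref{coro:derahm-whmf}, forcing $\lambda=0$ since $[f]\neq 0$. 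Therefore $D^{k+1}h_m=D^{k+1}H$, and comparing $n$-th Fourier coefficients for $n\ge 1$ produces the formula for $a_n(P_{-m})$ with $r_{m,n}\in\mathbb{Q}$. The final assertion is immediate: $P_m=\alpha_m f$ vanishes iff $a_m(f)=0$, in which case $a_n(P_{-m})=n^{k+1}a_n(h_m)\in\mathbb{Q}$ for $n\ge 1$, while the coefficients for $n\le 0$ are trivially rational, coming from the principal part $q^{-m}$.
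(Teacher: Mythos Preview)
Your proof is correct and follows essentially the same route as the paper's: write $P_m=\alpha_m f$, determine $\alpha_m$ via Corollary~\ref{coro:derham-poincare} and \eqref{eq:dR-form-sv-period}, apply $\sv$ and Proposition~\ref{prop:poincare-betti-conjugate} to identify $[P_{-m}]$, lift via Corollary~\ref{coro:derahm-whmf}, and then establish rationality of the lift by examining principal parts. The only divergence is in this last step: where you build a separate rational $h_m$ via the image-subspace argument and then show $D^{k+1}h_m=D^{k+1}H$, the paper instead observes that the equation $q^{-m}=\frac{k!}{m^{k+1}}a_m(f)\mathcal{P}_g+D^{k+1}\mathcal{P}_{H}$ forces $\mathcal{P}_H\in\mathbb{Q}[q^{-1}]$ and then invokes Lemma~\ref{lemma:principal-part} directly to conclude $H\in M_{-k}^{!,\infty}(\Gamma_0(N);\mathbb{Q})$, so that one may simply take $h_m=H$.
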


It also follows from the above proposition that $\mathbb{Q}(\sv)\defeq \mathbb{Q}(a,b,c,d)= \mathbb{Q}(c,\rho)$ coincides with the field $\mathbb{Q}(a_n(P_m)\text{ ; }n\ge 1\text{, }m\neq 0)$ generated by the coefficients of all the weakly holomorphic Poincaré series of level $N$ and weight $k+2$.

\begin{proof}
Let $m$ be a positive integer and, since $\dim S_{k+2}(\Gamma_0(N))=1$, write
\begin{align}\label{eq:Pm=lambdaf}
P_m = \lambda f
\end{align}
for some $\lambda \in \mathbb{R}$. By Corollary \ref{coro:derham-poincare}, we have
$$
\frac{k!}{m^{k+1}}a_m(f) = ([f],[P_m])_{\dR} = \lambda ([f],[f])_{\dR} = -\lambda c
$$
so that
\begin{align}\label{eq:lambda}
\lambda = -\frac{k!}{m^{k+1}}a_m(f)c^{-1}\text{.}
\end{align}
In particular, we obtain the following general formula for the Fourier coefficients at infinity of $P_m$ in terms of the single-valued period $c$:
$$
a_n(P_m) = -\frac{k!}{m^{k+1}}a_m(f) a_n(f)c^{-1}\text{.}
$$
Note that $P_m$ vanishes identically if and only if $a_m(f)=0$.

Now, set
\begin{equation}\label{eq:f-flat}
f^{\flat} \defeq af + cg \in S_{k+2}^{!,\infty}(\Gamma_0(N);\mathbb{R})\text{.}
\end{equation}
Since $f^{\flat}$ and $f$ are Betti conjugate (Definition \ref{defi:betti-conjugate}), and $P_{-m}$ and $-P_m$ are Betti conjugate (Proposition \ref{prop:poincare-betti-conjugate}), it follows from (\ref{eq:Pm=lambdaf}) that $-\lambda f^{\flat}$ and $P_{-m}$ map to the same class in
$$
H^1_{\dR,\cusp}(\mathcal{Y}_0(N),\mathcal{V}_k)\tensor_{\mathbb{Q}}\mathbb{R} \cong S^{!,\infty}_{k+2}(\Gamma_0(N);\mathbb{R})/ D^{k+1}M_{-k}^{!,\infty}(\Gamma_0(N);\mathbb{R})\text{.}
$$
Thus, there exists $h_m\in M_{-k}^{!,\infty}(\Gamma_0(N);\mathbb{R})$ such that
\begin{align}\label{eq:P-m=-lambdafflat}
P_{-m} = -\lambda f^{\flat} + D^{k+1}h_m\text{.}
\end{align}
By considering principal parts at the cusp at infinity, we get
\begin{align}\label{eq:principal-part-h}
\frac{1}{q^m} = -\lambda \mathcal{P}_{f^{\flat}} + \mathcal{P}_{D^{k+1}h_m} \stackrel{(\ref{eq:f-flat})}{=} -\lambda c\mathcal{P}_g + D^{k+1}\mathcal{P}_{h_m} =  \frac{k!}{m^{k+1}}a_m(f)\mathcal{P}_g + D^{k+1}\mathcal{P}_{h_m}\text{.}
\end{align}
As both $f$ and $g$ have rational Fourier coefficients at infinity, we get $D^{k+1}\mathcal{P}_{h_m} \in \mathbb{Q}[q^{-1}]$. Thus, $\mathcal{P}_{h_m} \in \mathbb{Q}[q^{-1}]$, and Lemma \ref{lemma:principal-part} implies that $h_m \in M_{-k}^{!,\infty}(\Gamma_0(N);\mathbb{Q})$.

Finally, by taking Fourier coefficients at infinity in (\ref{eq:P-m=-lambdafflat}) and applying (\ref{eq:lambda}), we get
 $$
 a_n(P_{-m}) =  \frac{k!}{m^{k+1}}a_m(f)a_n(f)\rho + \frac{k!}{m^{k+1}}a_m(f)a_n(g) + n^{k+1}a_n(h_m)\text{.}
 $$
\end{proof}

When $m=1$, we can take $h_m$ to be zero. Indeed, in the above cases we always have $a_1(f)\neq 0$, so we can normalise $a_1(f)=1$. Moreover, we can assume that $\ord_{\infty}(g)\ge -1$, so that $\mathcal{P}_g = (k!)^{-1}q^{-1}$ by the condition $\langle [f],[g] \rangle_{\dR}=1$. Eq. (\ref{eq:principal-part-h}) in the above proof then implies that $\mathcal{P}_{h_1} = 0$, which yields $h_1=0$.

\begin{coro}
  For $f$ and $g$ as above, we have $a_n(P_{-1}) = k!(a_n(f)\rho + a_n(g))$. \hfill $\square$  
\end{coro}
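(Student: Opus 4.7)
The plan is to derive this corollary directly from Proposition \ref{prop:sv-periods-rank-2} by showing that the error term $n^{k+1} a_n(h_1)$ vanishes for every $n \geq 1$ under the stated normalisations. Specialising that proposition to $m = 1$ with $a_1(f) = 1$ gives
$$
a_n(P_{-1}) = k!\, a_n(f)\rho + k!\, a_n(g) + n^{k+1} a_n(h_1),
$$
so the task reduces to showing $a_n(h_1) = 0$ for all $n \geq 1$.

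First I would pin down the principal part of $g$. Formula (\ref{eq:derham-cup}) for the de Rham pairing, applied to the cusp form $f$ (which kills all terms with $n \leq 0$) and using $\ord_\infty(g) \geq -1$ (which kills the terms $a_{-n}(g)$ with $n \geq 2$), collapses to the single term
$$
1 = \langle [f],[g]\rangle_{\dR} = k!\, a_1(f)\, a_{-1}(g) = k!\, a_{-1}(g),
$$
so $a_{-1}(g) = (k!)^{-1}$. To force $a_0(g) = 0$, I would compare the $q^0$ coefficients on both sides of the principal-part identity (\ref{eq:principal-part-h}), whose specialisation at $m = 1$ reads $q^{-1} = k!\, \mathcal{P}_g + D^{k+1}\mathcal{P}_{h_1}$. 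Since $D^{k+1}$ acts on $q^n$ as multiplication by $n^{k+1}$ it annihilates constant terms, so the $q^0$ coefficient of the right-hand side is $k!\, a_0(g)$; hence $a_0(g) = 0$ and $\mathcal{P}_g = (k!)^{-1} q^{-1}$.

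Plugging this back into the same identity reduces it to $D^{k+1}\mathcal{P}_{h_1} = 0$, and the same multiplication-by-$n^{k+1}$ fact now forces $a_n(h_1) = 0$ for every $n \leq -1$. Thus $h_1$ is holomorphic at $\infty$, and since by definition $h_1 \in M_{-k}^{!,\infty}(\Gamma_0(N);\mathbb{Q})$ is already holomorphic at the other cusps, it lies in the classical space $M_{-k}(\Gamma_0(N);\mathbb{Q})$. By Remark \ref{obs:negative-weight} this space vanishes for $k > 0$ and reduces to constants for $k = 0$; in either case $a_n(h_1) = 0$ for every $n \geq 1$, and substituting this into the displayed formula above yields the corollary.

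There is no serious obstacle beyond the bookkeeping on principal parts once Proposition \ref{prop:sv-periods-rank-2} is in hand. The only minor subtlety is that when $k = 0$ one does not obtain $h_1 = 0$ outright but only $h_1$ constant; this is harmless, since a constant has vanishing Fourier coefficients $a_n(h_1)$ for $n \geq 1$ and so contributes nothing to the error term $n^{k+1} a_n(h_1)$.
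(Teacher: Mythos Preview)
Your proof is correct and follows essentially the same route as the paper: specialise Proposition \ref{prop:sv-periods-rank-2} to $m=1$, use the normalisations $a_1(f)=1$ and $\ord_\infty(g)\ge -1$ together with $\langle [f],[g]\rangle_{\dR}=1$ to pin down $\mathcal{P}_g=(k!)^{-1}q^{-1}$, feed this into the principal-part identity (\ref{eq:principal-part-h}) to see that $h_1$ has trivial principal part, and conclude via Remark \ref{obs:negative-weight}. One small simplification: you do not need to extract $a_0(g)=0$ from (\ref{eq:principal-part-h}), since $g\in S^{!,\infty}_{k+2}(\Gamma_0(N);\mathbb{Q})$ is by hypothesis a weakly holomorphic \emph{cusp} form and thus already has vanishing constant term at every cusp. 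Your careful handling of the $k=0$ case (where $h_1$ is only forced to be constant rather than zero) is a nice touch that the paper elides.
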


When $N=1$ and $k+2=12$, this recovers Brown's formula (\cite{brown18} Corollary 1.4), up to a difference in normalisation. 
 
\section{Coefficients of Poincaré series as single-valued periods: general case}\label{sec:general-case}

From now on, we fix an even integer $k\ge 0$, and an integer $N\ge 1$. For simplicity, we denote
$$
P_m \defeq P_{m,k+2,N} \in S^{!,\infty}_{k+2}(\Gamma_0(N);\mathbb{R})
$$
for every $m \in \mathbb{Z}\setminus\{0\}$.

\begin{prop}\label{prop:poincare-generators}
  Let $m_1,\dots,m_s \ge 1$ be integers such that $P_{m_1},\ldots,P_{m_s}$ generate the $\mathbb{R}$-vector space $S_{k+2}(\Gamma_0(N);\mathbb{R})$. Then, for every integer $m\ge 1$, there exist $\lambda_1,\ldots,\lambda_s \in \mathbb{Q}$ and $h \in M_{-k}^{!,\infty}(\Gamma_0(N);\mathbb{Q})$ such that
  $$
  P_m = \lambda_1P_{m_1} + \cdots + \lambda_s P_{m_s}
  $$
  and
  $$
P_{-m} = \lambda_1 P_{-m_1} + \cdots + \lambda_s P_{-m_s} + D^{k+1}h\text{.}
  $$
\end{prop}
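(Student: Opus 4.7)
My plan is to first produce the rational scalars $\lambda_i$ by pairing against a $\mathbb{Q}$-basis of cusp forms, then use the single-valued involution to transfer the identity to the Poincaré series of negative index, and finally upgrade the resulting real weakly holomorphic preimage under $D^{k+1}$ to a rational one.

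\emph{Producing $\lambda_i \in \mathbb{Q}$.} Since $P_m \in S_{k+2}(\Gamma_0(N);\mathbb{R})$ lies in the $\mathbb{R}$-span of the $P_{m_i}$, one has $P_m = \sum_i \lambda_i P_{m_i}$ for some real $\lambda_i$. To force $\lambda_i \in \mathbb{Q}$, I would test this relation against a $\mathbb{Q}$-basis $f_1, \ldots, f_d$ of $S_{k+2}(\Gamma_0(N);\mathbb{Q})$ via the Petersson inner product. By formula (\ref{eq:intro-petersson-formula}), the identity is equivalent to the linear system
\[
\frac{a_m(f_\ell)}{m^{k+1}} = \sum_i \lambda_i \frac{a_{m_i}(f_\ell)}{m_i^{k+1}}, \qquad \ell = 1, \ldots, d,
\]
whose coefficients and right-hand sides all lie in $\mathbb{Q}$. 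Being consistent over $\mathbb{R}$ (by the initial real solution), it is consistent over $\mathbb{Q}$, so a rational solution exists; non-degeneracy of the Petersson pairing on $S_{k+2}(\Gamma_0(N);\mathbb{C})$ then ensures that any such rational solution satisfies $P_m = \sum_i \lambda_i P_{m_i}$.

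\emph{Transferring to negative index.} Interpreting the identity in $H^1_{\dR,\cusp}(\mathcal{Y}_0(N),\mathcal{V}_k) \otimes_\mathbb{Q} \mathbb{R}$ and applying the single-valued involution, Proposition \ref{prop:poincare-betti-conjugate}---which gives $\sv([P_n]) = -[P_{-n}]$ for every integer $n \neq 0$---combined with $\lambda_i \in \mathbb{Q}$ yields $[P_{-m}] = \sum_i \lambda_i [P_{-m_i}]$ in the same cohomology group. By Corollary \ref{coro:derahm-whmf} tensored with $\mathbb{R}$, there exists $h' \in M^{!,\infty}_{-k}(\Gamma_0(N);\mathbb{R})$ with $P_{-m} - \sum_i \lambda_i P_{-m_i} = D^{k+1} h'$.

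\emph{Rationality of $h$ (the main obstacle).} Comparing Fourier expansions at $\infty$ for $n < 0$ and using that the principal parts of $P_{-m}$ and $P_{-m_i}$ are $q^{-m}$ and $q^{-m_i}$ respectively (Proposition \ref{prop:poincare-fourier}), one obtains $a_n(h') \in \mathbb{Q}$ for every $n < 0$. Using the $\mathbb{Q}$-form of the exact sequence from Corollary \ref{coro:derahm-whmf} together with its compatibility with extension of scalars to $\mathbb{R}$ (Corollary \ref{coro:field-definition}), I would produce $h \in M^{!,\infty}_{-k}(\Gamma_0(N);\mathbb{Q})$ with $a_n(h) = a_n(h')$ for all $n < 0$. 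The difference $h - h'$ then lies in $M^{!,\infty}_{-k}(\Gamma_0(N);\mathbb{R})$ and is holomorphic at $\infty$ by the vanishing of its strict principal part, hence at every cusp by the definition of $M^{!,\infty}_{-k}$, so it is a holomorphic modular form of weight $-k$. For $k \geq 2$ the valence formula (Remark \ref{obs:negative-weight}) forces $h = h'$; for $k = 0$ it forces $h - h'$ to be a real constant, which is annihilated by $D$. In either case $D^{k+1} h = P_{-m} - \sum_i \lambda_i P_{-m_i}$, completing the proof. The content of this step---and the main obstacle---is the implicit rationality statement that the a priori transcendental combinations $a_n(P_{-m}) - \sum_i \lambda_i a_n(P_{-m_i})$ must in fact lie in $n^{k+1} \mathbb{Q}$ for every $n > 0$, which is precisely what the cohomological framework delivers.
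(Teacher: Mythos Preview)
Your proposal is correct and follows essentially the same route as the paper: Petersson's formula to force the $\lambda_i$ rational, Proposition~\ref{prop:poincare-betti-conjugate} plus Corollary~\ref{coro:derahm-whmf} to produce a real $h'$, and rationality of the principal part at $\infty$ to descend $h'$ to $\mathbb{Q}$. Your third step is precisely the content of Lemma~\ref{lemma:principal-part}, which the paper simply cites; your unpacking (produce a rational $h$ with the same strict principal part, then observe $h-h'\in M_{-k}(\Gamma_0(N);\mathbb{R})$ is zero or constant) is exactly its proof, and in fact handles the $k=0$ constant ambiguity more explicitly than the paper does. The only cosmetic slip is that the reference you need in that step is Corollary~\ref{coro:field-definition} (the $\mathbb{Q}$-structure on $M^{!,\infty}_{-k}$), not Corollary~\ref{coro:derahm-whmf}.
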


\begin{proof}
 For every integer $m\ge 1$, recall that we view $f\mapsto a_m(f)$ as a linear functional on $S_{k+2}(\Gamma_0(N))$; it follows from Corollary \ref{coro:derham-poincare} that
 \begin{align}\label{eq:petersson-poincare}
( \ \ , [P_m])_{\dR} = \frac{k!}{m^{k+1}}a_m
 \end{align}
 in $S_{k+2}(\Gamma_0(N))^{\vee}$. Since $P_{m_1},\ldots,P_{m_s}$ generate $S_{k+2}(\Gamma_0(N);\mathbb{R})$, and since $( \ , \ )_{\dR}$ is non-degenerate on $S_{k+2}(\Gamma_0(N))$, the functionals  $a_{m_1},\ldots,a_{m_s}$ generate the $\mathbb{Q}$-vector space $S_{k+2}(\Gamma_0(N);\mathbb{Q})^{\vee}$. Thus, there exist $\lambda_1,\ldots,\lambda_s \in \mathbb{Q}$ such that $a_m = \lambda_1 a_{m_1} + \cdots + \lambda_s a_{m_s}$. Again, by (\ref{eq:petersson-poincare}) and by the non-degeneracy of $( \ , \ )_{\dR}$, we conclude that
$$
P_m = \lambda_1 P_{m_1} + \cdots + \lambda_s P_{m_s}\text{.}
$$

Now, Proposition \ref{prop:poincare-betti-conjugate} implies that both $P_{-m}$ and $\lambda_1P_{-m_1} + \cdots + \lambda_s P_{-m_s}$ are Betti conjugate to $-P_m$; in particular, they map to the same class in $H^1_{\dR}(\mathcal{Y}_0(N),\mathcal{V}_k)\tensor_{\mathbb{Q}}\mathbb{R}$. Thus, by Corollary \ref{coro:derahm-whmf}, there exists $h\in M^{!,\infty}_{-k}(\Gamma_0(N);\mathbb{R})$ such that
$$
P_{-m} = \lambda_1P_{-m_1} + \cdots + \lambda_s P_{-m_s} + D^{k+1}h\text{.}
$$
Finally, since $\lambda_i \in \mathbb{Q}$, by taking principal parts at the cusp at infinity, we deduce from Proposition \ref{prop:poincare} and Lemma \ref{lemma:principal-part} that $h \in M_{-k}^{!,\infty}(\Gamma_0(N);\mathbb{\mathbb{Q}})$.
\end{proof}

\begin{coro}\label{coro:generator-poincare}
  Let $m_1,\dots,m_s \ge 1$ be integers such that $P_{m_1},\ldots,P_{m_s}$ generate the $\mathbb{R}$-vector space $S_{k+2}(\Gamma_0(N);\mathbb{R})$. Then, for every integers $m,n\ge 1$, 
  \begin{enumerate}
  \item $a_n(P_{m})$ lies in the $\mathbb{Q}$-linear span of $\{a_n(P_{m_i}) \text{ ; } 1\le i \le s\}$, and
  \item $a_n(P_{-m})$ lies in the $\mathbb{Q}$-linear span of $\{1\}\cup \{a_n(P_{-m_i}) \text{ ; } 1 \le i \le s\}$. 
    \end{enumerate}
  \end{coro}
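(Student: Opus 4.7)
The corollary should fall out almost immediately from Proposition \ref{prop:poincare-generators} by extracting Fourier coefficients at infinity, so the plan is essentially mechanical rather than technical.

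Concretely, I would first invoke Proposition \ref{prop:poincare-generators} to produce rationals $\lambda_1,\ldots,\lambda_s \in \mathbb{Q}$ and a form $h \in M_{-k}^{!,\infty}(\Gamma_0(N);\mathbb{Q})$ (both depending on $m$) such that
$$
P_m = \sum_{i=1}^{s} \lambda_i P_{m_i}
\qquad \text{and} \qquad
P_{-m} = \sum_{i=1}^{s} \lambda_i P_{-m_i} + D^{k+1}h.
$$
Applying the $\mathbb{C}$-linear functional $a_n$ to the first identity immediately gives
$$
a_n(P_m) = \sum_{i=1}^{s} \lambda_i\, a_n(P_{m_i}),
$$
which, since $\lambda_i \in \mathbb{Q}$, proves (1).

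For (2), applying $a_n$ to the second identity yields
$$
a_n(P_{-m}) = \sum_{i=1}^{s} \lambda_i\, a_n(P_{-m_i}) + a_n(D^{k+1}h).
$$
Since $D = q\frac{d}{dq}$, we have $a_n(D^{k+1}h) = n^{k+1} a_n(h)$, and because $h$ has rational Fourier coefficients at infinity this lies in $\mathbb{Q}$. Thus $a_n(P_{-m})$ is a $\mathbb{Q}$-linear combination of the $a_n(P_{-m_i})$ together with the rational number $a_n(D^{k+1}h)\cdot 1$, proving (2).

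There is essentially no obstacle here: all the substantive content — the rationality of the $\lambda_i$, the rationality of the Fourier coefficients of $h$, and the fact that a correction term from $D^{k+1}M_{-k}^{!,\infty}$ is needed only on the negative-index side — was already packaged into Proposition \ref{prop:poincare-generators}. The only thing worth being careful about is not conflating the two uses of $\lambda_i$: the same rationals appear in both decompositions, which is crucial because otherwise one would not obtain a linear combination of the $a_n(P_{-m_i})$ with matching coefficients.
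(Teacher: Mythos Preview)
Your proposal is correct and follows exactly the paper's approach: the paper's proof is the single sentence ``Consider Fourier coefficients at infinity in the statement of Proposition \ref{prop:poincare-generators},'' and you have simply unpacked that instruction in the obvious way.
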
 

  \begin{proof}
Consider Fourier coefficients at infinity in the statement of Proposition \ref{prop:poincare-generators}.
  \end{proof}
  
We come to our main theorem.

\begin{theorem}\label{thm:main}
  Let $\mathbb{Q}(\sv)\subset \mathbb{R}$ be the field of rationality of the single-valued involution
  $$
\sv: H^1_{\dR,\cusp}(\mathcal{Y}_0(N),\mathcal{V}_k)\tensor_{\mathbb{Q}}\mathbb{R} \to H^1_{\dR,\cusp}(\mathcal{Y}_0(N),\mathcal{V}_k)\tensor_{\mathbb{Q}}\mathbb{R}\text{,}
$$
and $\mathbb{Q}(P)\subset \mathbb{R}$ be the $\mathbb{Q}$-extension generated by the Fourier coefficients at infinity of all the Poincaré series $P_m$, for $m\in \mathbb{Z}\setminus\{0\}$. Then
$$
\mathbb{Q}(\sv) = \mathbb{Q}(P)\text{.}
$$
\end{theorem}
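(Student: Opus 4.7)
The plan is to prove both inclusions $\mathbb{Q}(P) \subseteq \mathbb{Q}(\sv)$ and $\mathbb{Q}(\sv) \subseteq \mathbb{Q}(P)$ by expanding Poincaré series and the single-valued involution in a common $\mathbb{Q}$-basis of $H^1_{\dR,\cusp}(\mathcal{Y}_0(N),\mathcal{V}_k)$, and passing between the two sides via the key identity $\sv([P_m]) = -[P_{-m}]$ of Proposition \ref{prop:poincare-betti-conjugate}. Concretely, I fix a $\mathbb{Q}$-basis $(f_1,\ldots,f_d)$ of $S_{k+2}(\Gamma_0(N);\mathbb{Q})$, complete it to a de Rham basis $(\omega_1,\ldots,\omega_d,\eta_1,\ldots,\eta_d)$ with $\omega_i = [f_i]$ and $\eta_j = [g_j]$ for some $g_j \in S^{!,\infty}_{k+2}(\Gamma_0(N);\mathbb{Q})$, arranged as in Section \ref{sec:sv-per-pol}, and write the single-valued matrix as $S = \bigl(\begin{smallmatrix} A & B \\ C & D \end{smallmatrix}\bigr)$. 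By Proposition \ref{prop:svp-matrix-polarisation}, $\mathbb{Q}(\sv) = \mathbb{Q}(A,C)$, and formulas (\ref{eq:dR-form-sv-period})--(\ref{eq:dR-form-sv-period2}) identify $C_{ij} = -([f_i],[f_j])_{\dR}$ and $A_{ij} = ([g_i],[f_j])_{\dR}$ for the Hermitian form of Lemma \ref{lemma:derham-hermitian}.

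For $\mathbb{Q}(P) \subseteq \mathbb{Q}(\sv)$, positive-index Poincaré series are cusp forms, and writing $P_m = \sum_i \alpha_i^{(m)} f_i$ with $\alpha_i^{(m)} \in \mathbb{R}$, Corollary \ref{coro:derham-poincare} combined with the identity $([f_j],[P_m])_{\dR} = -\sum_i C_{ji}\alpha_i^{(m)}$ gives a $\mathbb{Q}(\sv)$-linear system, invertible since $C \in \GL_d(\mathbb{R})$, whose solution places $\alpha_i^{(m)}$ in $\mathbb{Q}(\sv)$, whence $a_n(P_m) \in \mathbb{Q}(\sv)$. For the negative-index case, Proposition \ref{prop:poincare-betti-conjugate} gives $[P_{-m}] = -\sv([P_m])$ in $H^1_{\dR,\cusp}(\mathcal{Y}_0(N),\mathcal{V}_k)\tensor_{\mathbb{Q}}\mathbb{R}$, which expands in the basis with coefficients in $\mathbb{Q}(\sv)$; lifting back to $S^{!,\infty}_{k+2}(\Gamma_0(N);\mathbb{R})$ via Corollary \ref{coro:derahm-whmf} introduces a Bol-operator term $D^{k+1}h$ for some $h \in M^{!,\infty}_{-k}(\Gamma_0(N);\mathbb{R})$. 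Comparing principal parts at infinity, which are manifestly in $\mathbb{Q}(\sv)[q^{-1}]$ on both sides, and invoking Lemma \ref{lemma:principal-part}, I conclude $h \in M^{!,\infty}_{-k}(\Gamma_0(N);\mathbb{Q}(\sv))$, so $a_n(P_{-m}) \in \mathbb{Q}(\sv)$.

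For the converse $\mathbb{Q}(\sv) \subseteq \mathbb{Q}(P)$, Corollary \ref{coro:derham-poincare} and the non-degeneracy of $(\ ,\ )_{\dR}$ on $S_{k+2}(\Gamma_0(N))$ imply that the Fourier functionals $(a_m)_{m \ge 1}$ span $S_{k+2}(\Gamma_0(N);\mathbb{Q})^{\vee}$; picking $m_1,\ldots,m_d$ so that $(a_{m_j}(f_i))_{i,j} \in \GL_d(\mathbb{Q})$, the Poincaré series $P_{m_1},\ldots,P_{m_d}$ form an $\mathbb{R}$-basis of $S_{k+2}(\Gamma_0(N);\mathbb{R})$, and inversion yields $f_j = \sum_{j'} \nu_{jj'} P_{m_{j'}}$ with $\nu_{jj'} \in \mathbb{Q}(P)$. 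Applying $\sv$ and using Proposition \ref{prop:poincare-betti-conjugate}, $\sv([f_j]) = -\sum_{j'} \nu_{jj'}[P_{-m_{j'}}]$; substituting into $C_{ij} = \langle [f_i], \sv([f_j])\rangle_{\dR}$ and $A_{ij} = -\langle [g_i], \sv([f_j])\rangle_{\dR}$ (using Lemma \ref{lemma:betti-conj} and that the $f_i,g_i$ are real) and applying the residue formula (\ref{eq:derham-cup}) expresses $A_{ij}$ and $C_{ij}$ as explicit $\mathbb{Q}$-linear combinations of the $\nu_{jj'}$, of the rational Fourier coefficients of $f_i,g_i$, and of the Fourier coefficients of $P_{-m_{j'}}$ --- all of which lie in $\mathbb{Q}(P)$.

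The main obstacle is the first direction, specifically controlling the field of rationality of the lift $h$ in the identity $P_{-m} = \sum_i \beta_i f_i + \sum_i \gamma_i g_i + D^{k+1}h$ with $\beta_i, \gamma_i \in \mathbb{Q}(\sv)$: the Bol operator $D^{k+1}$ forgets the principal part of $h$ at infinity, but Lemma \ref{lemma:principal-part} recovers the field of definition of a weakly holomorphic modular form of weight $-k$ from that very principal part, closing the gap. A minor bookkeeping point is that the a priori doubly-infinite sum in (\ref{eq:derham-cup}) is always finite in the relevant direction, since $g_i$ is weakly holomorphic and cuspidal (hence $a_0(g_i) = 0$ so no $n=0$ pole arises) and the principal parts of $P_{-m_{j'}}$ are monomials $q^{-m_{j'}}$.
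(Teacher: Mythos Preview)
Your proof is correct and follows essentially the same architecture as the paper's: fix a symplectic $\mathbb{Q}$-basis $([f_i],[g_i])$ of cuspidal de Rham cohomology, use Corollary~\ref{coro:derham-poincare} to solve for the coefficients of $P_m$ in terms of $C$, then use Proposition~\ref{prop:poincare-betti-conjugate} and Lemma~\ref{lemma:principal-part} to control $P_{-m}$.

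There is one genuine, though minor, point of divergence in the reverse inclusion $\mathbb{Q}(\sv)\subseteq\mathbb{Q}(P)$. The paper lifts the cohomological identity $\sv([f_j])=-\sum_{j'}\Lambda'_{j'j}[P_{-m_{j'}}]$ back to $S^{!,\infty}_{k+2}$, obtaining $\sum_i A_{ij}f_i+C_{ij}g_i=-\sum_i\Lambda'_{ij}P_{-m_i}+D^{k+1}h'_j$, and then applies the Fourier-coefficient functionals $a_{m_r}$ to extract $A$; this requires a second appeal to Lemma~\ref{lemma:principal-part} to pin down the field of definition of $h'_j$. You instead stay in cohomology and pair directly against $[f_i]$ and $[g_i]$ via the residue formula~(\ref{eq:derham-cup}), which immediately expresses $C_{ij}=-\sum_{j'}\nu_{jj'}\langle[f_i],[P_{-m_{j'}}]\rangle_{\dR}$ and $A_{ij}=\sum_{j'}\nu_{jj'}\langle[g_i],[P_{-m_{j'}}]\rangle_{\dR}$ as finite $\mathbb{Q}$-linear combinations of rational Fourier coefficients of $f_i,g_i$ and of $a_n(P_{-m_{j'}})$. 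Your route is a bit cleaner here: it bypasses the second lift and the attendant bookkeeping with $h'_j$. The paper's route, on the other hand, yields the slightly sharper intermediate statement that the $h'_j$ themselves lie in $M^{!,\infty}_{-k}(\Gamma_0(N);\mathbb{Q}(C))$, which is of independent interest for the explicit formulas in Section~\ref{sec:hecke}.
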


\begin{proof}
  Let $d = \dim S_{k+2}(\Gamma_0(N))$. We showed in Sect. \ref{sec:real-modular-motives} that the object $H_{\cusp}^1(\mathcal{Y}_0(N),V_k)$ of $\mathcal{H}(\mathbb{Q})$ is pure of Hodge type $\{(k+1,0),(0,k+1)\}$ and admits a polarisation $\langle \ , \ \rangle$. In particular, by Corollary \ref{coro:derahm-whmf}, there exist $f_1,\ldots,f_d,g_1,\ldots,g_d \in S_{k+2}^{!,\infty}(\Gamma_0(N);\mathbb{Q})$ such that
  $$
b_{\dR} = ([f_1],\ldots,[f_d],[g_1],\ldots,[g_d])
$$
is a $\mathbb{Q}$-basis of $H^1_{\dR,\cusp}(\mathcal{Y}_0(N),\mathcal{V}_k)$ as in (\ref{eq:symplectic-Hodge-basis}), that is,
\begin{itemize}
\item $([f_1],\ldots,[f_d])$ is a $\mathbb{Q}$-basis of $F_{\dR}^{k+1}H^1_{\dR,\cusp}(\mathcal{Y}_0(N),\mathcal{V}_k)$ (or, equivalently, $(f_1,\ldots,f_d)$ is a $\mathbb{Q}$-basis of $S_{k+2}(\Gamma_0(N);\mathbb{Q})$), and
\item $\langle [f_i], [g_j] \rangle_{\dR} = \delta_{ij}$ and $\langle [g_i],[g_j] \rangle_{\dR} = 0$ for every $1\le i,j\le d$.
\end{itemize}

We prove first that $\mathbb{Q}(P)\subset \mathbb{Q}(\sv)$. For this, let $m_1,\ldots,m_d\ge 1$ be integers such that $(P_{m_1},\ldots,P_{m_d})$ is an $\mathbb{R}$-basis of $S_{k+2}(\Gamma_0(N);\mathbb{R})$, and write
   $$
   S = \left(\begin{array}{cc}
               A & B \\
               C & D
             \end{array}\right) \in {\GL}_{2g}(\mathbb{R})
           $$
           for the matrix of the single-valued involution $\sv$ in the basis $b_{\dR}$. By Corollary \ref{coro:generator-poincare}, it suffices to prove that $a_n(P_{ m_i}), a_n(P_{-m_i})\in \mathbb{Q}(S)$ for every integer $n\ge 1$ and $1\le i \le d$. Let $\Lambda = (\Lambda_{ij})_{i,j} \in {\GL}_d(\mathbb{R})$ be such that
\begin{align}\label{eq:poincare-base-change}
P_{m_j} = \sum_{i=1}^d\Lambda_{ij}f_i\text{, }\qquad 1\le j \le d\text{.}
\end{align}
By applying Corollary \ref{coro:derham-poincare} in the above formula,  we obtain
$$
\frac{k!}{m_j^{k+1}}a_{m_j}(f_r) = ([f_r],[P_{m_j}])_{\dR} = \sum_{i=1}^d([f_r],[f_i])_{\dR}\Lambda_{ij} = -\sum_{i=1}^dC_{ri}\Lambda_{ij}\text{,}
$$
where we used the identity (\ref{eq:dR-form-sv-period}) in the last equality above. Set $Q \defeq  (\frac{k!}{m_j^{k+1}}a_{m_j}(f_r))_{r,j} \in \GL_{d}(\mathbb{Q})$. The above equations amount to the matrix identity in $\GL_d(\mathbb{R})$
\begin{align}\label{eq:lambda-and-c}
  \Lambda = -C^{-1}Q\text{.}
\end{align}
Since $Q$ has rational coefficients, this proves already that $a_n(P_{m_j}) \in \mathbb{Q}(C) \subset \mathbb{Q}(S)$ for every $n\ge 1$ and $1\le j \le d$.

For every $1\le j \le d$, set
\begin{align*}
f^{\flat}_j \defeq \sum^d_{i=1}A_{ij}f_i + C_{ij}g_i \in S^{!,\infty}_{k+2}(\Gamma_0(N);\mathbb{Q}(\sv))\text{.}
\end{align*}
By definition of $S$, $f_j^{\flat}$ is Betti conjugate to $f_j$. Then, it follows from Proposition \ref{prop:poincare-betti-conjugate} and from (\ref{eq:poincare-base-change}) that both $P_{-m_j}$ and $- \sum_{i=1}^d \Lambda_{ij}f^{\flat}_i$ are Betti conjugate to $-P_{m_j}$. Thus, by Corollary \ref{coro:derahm-whmf}, there exists $h_j \in M_{-k}^{!,\infty}(\Gamma_0(N))$ such that
\begin{align}\label{eq:negative-m}
P_{-m_j} = - \sum_{i=1}^d \Lambda_{ij}f^{\flat}_i + D^{k+1}h_j\text{.}
\end{align}
By taking principal parts at the cusp at infinity in the above equation, and by applying Lemma \ref{lemma:principal-part} and (\ref{eq:lambda-and-c}), we get
$$
h_j \in M_{-k}^{!,\infty}(\Gamma_0(N); \mathbb{Q}(\Lambda))= M_{-k}^{!,\infty}(\Gamma_0(N); \mathbb{Q}(C))\text{.}
$$
Finally, by taking Fourier coefficients at infinity in Eq. (\ref{eq:negative-m}), we obtain
$$
a_n(P_{-m_j}) \in \mathbb{Q}(A,C) \subset \mathbb{Q}(S)
$$
for every integer $n\ge 1$. This concludes the proof that $\mathbb{Q}(P)\subset \mathbb{Q}(S)$.

For the reverse inclusion, note that Proposition \ref{prop:svp-matrix-polarisation} yields $\mathbb{Q}(\sv) = \mathbb{Q}(A,C)$. The formulas (\ref{eq:poincare-base-change}) and (\ref{eq:lambda-and-c}) above imply that $\mathbb{Q}(C)\subset \mathbb{Q}(P)$; thus, to finish our proof, it suffices to show that $\mathbb{Q}(A) \subset \mathbb{Q}(P)$.

If we denote $\Lambda' = \Lambda^{-1} \in \GL_{d}(\mathbb{Q}(C))$, then we can write
$$
f_{j} = \sum_{i=1}^d\Lambda'_{ij}P_{m_i}\text{,}\qquad 1\le j \le d\text{.}
$$
Arguing as above, we deduce that both $f^{\flat}_j$ and $-\sum_{i=1}^d \Lambda_{ij}'P_{-m_i}$ are Betti conjugate to $f_j$, so that there exists $h_j' \in M_{-k}^{!,\infty}(\Gamma_0(N);\mathbb{Q}(C))$ such that
$$
\sum_{i=1}^d A_{ij}f_i + C_{ij}g_i = -\sum_{i=1}^d\Lambda_{ij}'P_{-m_i} + D^{k+1}h_j'\text{.}
$$
By applying the functionals $\frac{k!}{m_r^{k+1}}a_{m_r}$ to the above equation, we get $Q^tA  = M$, for some $M \in M_{d\times d}(\mathbb{Q}(P))$ (recall that $\mathbb{Q}(C)\subset \mathbb{Q}(P)$). Here, $Q$ is the same matrix of Eq. (\ref{eq:poincare-base-change}); since it is invertible and has rational coefficients, we conclude that $\mathbb{Q}(A)\subset \mathbb{Q}(P)$.
\end{proof}

\section{Hecke theory; single-valued periods of modular forms}\label{sec:hecke}

In this last section, we explain how to use Hecke operators to obtain simple formulas, similar to the rank 2 case, relating the single-valued periods of $H^1_{\cusp}(\mathcal{Y}_0(N),V_k)$ to the Fourier coefficients of Poincaré series of weight $k+2$.

Recall that, for every prime $p\nmid N$, there is a correspondence
$$
\begin{tikzcd}
  & \mathcal{Y}_0(pN)\arrow{rd}{\pi_2}\arrow{ld}[swap]{\pi_1} & \\
 \mathcal{Y}_0(N) & & \mathcal{Y}_0(N)
\end{tikzcd}
$$
defined by $\pi_1 (E,C) = (E,C_N)$ and $\pi_2(E,C) = (E/C_p,C/C_p)$, where $C_N$ and $C_p$ denote the unique cyclic subgroups of $C$ of order $N$ and $p$. This induces an endomorphism $T_p$ (Hecke operator) of $H^1(\mathcal{Y}_0(N),V_k)$ in $\mathcal{H}(\mathbb{Q})$ by the usual pullback-pushforward procedure (see \cite{deligne69} (3.13)-(3.18) for details).

Given $f \in M_{k+2}^!(\Gamma_0(N))$, we have $T_{p,\dR}[f] = [T_pf]$, where
$$
T_pf \defeq \sum_{n\ge 1}(a_{pn}(f) + p^{k+1}a_{n/p}(f))q^n\text{.}
$$
By construction, the induced endomorphism $T_p$ on $H^1_{\cusp}(\mathcal{Y}_0(N),V_k)$ can be shown to be compatible with the polarisation $\langle  \ , \ \rangle$, that is, $\langle T_p \ , \ \rangle = \langle \ , T_p \ \rangle$.

Let $f \in S_{k+2}(\Gamma_0(N))$ be a normalised Hecke newform (in particular, $a_1(f)=1$ and $T_pf = a_p(f)f$ for every prime $p\nmid N$). Recall that the subfield $K_f \defeq \mathbb{Q}(a_n(f) \text{ ; } n\ge 1)\subset \mathbb{C}$ is totally real (\cite{CS17} Proposition 10.6.2).

\begin{prop}
There exists a unique rank 2 polarised subobject $H_f$ of $H_{\cusp}^1(\mathcal{Y}_0(N),V_k)\tensor_{\mathbb{Q}}K_f$ in $\mathcal{H}(K_f)$ given by $\bigcap_{p\nmid N}\ker (T_p-a_p(f)\id)$.
\end{prop}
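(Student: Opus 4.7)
The plan is to take $H_f \defeq \bigcap_{p \nmid N} \ker(T_p - a_p(f)\id)$ as the definition and verify that it provides a rank $2$ polarised subobject in $\mathcal{H}(K_f)$. The subobject property is immediate: each Hecke operator $T_p$ is an endomorphism of $H^1_{\cusp}(\mathcal{Y}_0(N),V_k)$ in $\mathcal{H}(\mathbb{Q})$, so $T_p - a_p(f)\id$ becomes a morphism in $\mathcal{H}(K_f)$ after base change; its kernel is a subobject, and an intersection of subobjects is again a subobject. Uniqueness of $H_f$ is built into the explicit formula.

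For the rank, I would exploit the Hodge type $\{(k+1,0),(0,k+1)\}$ of $H^1_{\cusp}(\mathcal{Y}_0(N),V_k)$, which $H_f$ inherits as a pure subobject. This yields $\dim_{K_f} H_{f,\dR} = 2 \dim_{K_f} F_{\dR}^{k+1} H_{f,\dR}$. Via the isomorphism $S_{k+2}(\Gamma_0(N);K_f) \cong F_{\dR}^{k+1} H^1_{\dR,\cusp}(\mathcal{Y}_0(N),\mathcal{V}_k)\tensor_{\mathbb{Q}} K_f$, $h \mapsto [h]$, coming from Theorem \ref{thm:derham-cohom}, the action of $T_p$ transports to the classical Hecke operator on $q$-expansions. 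So $F_{\dR}^{k+1} H_{f,\dR}$ corresponds to the joint Hecke eigenspace in $S_{k+2}(\Gamma_0(N);K_f)$ with eigenvalues $(a_p(f))_{p\nmid N}$; by strong multiplicity one for newforms (which rules out both other newform systems and contributions from oldforms at divisor levels), this eigenspace is precisely the line $K_f \cdot f$. Hence $\dim_{K_f} F_{\dR}^{k+1} H_{f,\dR} = 1$ and $\dim_{K_f} H_{f,\dR} = 2$.

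That the polarisation restricts non-degenerately to $H_f$ would follow from self-adjointness of $T_p$ combined with semisimplicity of the Hecke action. Self-adjointness ($\langle T_p\alpha,\beta\rangle = \langle\alpha,T_p\beta\rangle$) makes distinct joint Hecke eigenspaces mutually $\langle\cdot,\cdot\rangle$-orthogonal. For semisimplicity, I would use the Hodge--Riemann bilinear relations to construct a positive definite Hermitian form on $H^1_{\dR,\cusp}(\mathcal{Y}_0(N),\mathcal{V}_k)\tensor_{\mathbb{Q}}\mathbb{C}$ (via the Weil operator attached to the Hodge decomposition, which commutes with $T_p$); with respect to this form each $T_p$ is normal, hence diagonalisable, and the commuting family $\{T_p\}_{p\nmid N}$ is simultaneously diagonalisable. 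This yields an orthogonal joint-eigenspace decomposition defined over $K_f$, showing that $H_f$ is an orthogonal direct summand, that $\langle\cdot,\cdot\rangle$ restricts non-degenerately, and that the Hodge--Riemann relations, being pointwise conditions on Hodge components, are automatically inherited from those of the ambient object.

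The main obstacle is establishing semisimplicity of the Hecke action on the full object $H^1_{\cusp}(\mathcal{Y}_0(N),V_k)$, rather than merely on the Hodge graded piece $S_{k+2}(\Gamma_0(N))$ where Petersson positivity is manifest. Once the positive definite Hermitian pairing on the total de Rham cohomology is extracted from the polarisation and the Hodge decomposition, the remainder of the argument is routine.
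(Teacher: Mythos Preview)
Your proposal is correct and follows essentially the same approach as the paper, which simply cites the multiplicity one theorem and refers to Scholl's construction of motives for modular forms. You supply considerably more detail than the paper's two-line proof: in particular, your use of the Hodge type $\{(k+1,0),(0,k+1)\}$ to reduce the rank computation to $F_{\dR}^{k+1}$, and your argument for non-degeneracy of the restricted polarisation via a positive definite Hermitian form built from the Weil operator, are both sound and make explicit what the paper leaves to the reader.
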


\begin{proof}
This follows from the `multiplicity 1 theorem' (\cite{CS17} Theorem 13.3.9) and can be checked directly on the de Rham and Betti realisations. See also \cite{scholl90}.
\end{proof}

Concretely, the de Rham realisation $H_{f,\dR}$ is a $K_f$-vector space of dimension 2 admitting a basis of the form $([f],[g])$, with $g \in S_{k+2}^{!,\infty}(\Gamma_0(N);K_f)$ satisfying
\begin{enumerate}
  \item $\langle [f],[g]\rangle_{\dR}=1$, and
  \item for every prime $p\nmid N$, we have $T_pg =a_p(f)g + D^{k+1}h_{p}$, for some $h_p \in M_{-k}^{!,\infty}(\Gamma_0(N);K_f)$.
\end{enumerate}

We now wish to express the single-valued periods of $H_f$ in terms of coefficients of Poincaré series. To simplify, we shall assume from now on that
$$
S_{k+2}(\Gamma_0(N)) = S_{k+2}^{\text{new}}(\Gamma_0(N))\text{.}
$$
This happens, for instance, when $N=1$, or when $N$ is prime and $S_{k+2}(\Gamma_0(1))=0$. In this case, there exists a unique basis $(f_1,\ldots,f_d)$ of $S_{k+2}(\Gamma_0(N))$, up to ordering, where each $f_i$ is a normalised Hecke newform. If $K\subset \mathbb{R}$ is the compositum of $K_{f_1},\ldots,K_{f_d}$, then this basis induces a splitting
$$
H^1_{\cusp}(\mathcal{Y}_0(N),V_k)\tensor_{\mathbb{Q}}K \cong \bigoplus_{i=1}^dH_{f_i}\tensor_{K_{f_i}}K
$$
in $\mathcal{H}(K)$. Note that such a decomposition is necessarily orthogonal for $\langle \ , \ \rangle_{\dR}$, since, for $\omega_i \in H_{f_i,\dR}$ and $\omega_j \in H_{f_j,\dR}$, we have
$$
a_p(f_i)\langle \omega_i,\omega_j\rangle_{\dR} = \langle T_{p,\dR}\omega_i,\omega_j\rangle_{\dR} = \langle \omega_i,T_{p,\dR}\omega_j\rangle_{\dR} = a_p(f_j)\langle \omega_i,\omega_j\rangle_{\dR}
$$
for every prime $p\nmid N$.

For each $i$, let $g_i \in S_{k+2}^{!,\infty}(\Gamma_0(N);K_{f_i})$ be a weakly holomorphic modular form satisfying properties (1) and (2) above. We denote the single-valued period matrix of $H_{f_i}$ in the basis $([f_i],[g_i])$ by
$$
S_i = \left(\begin{array}{cc}
              a_i & b_i\\
              c_i & d_i
      \end{array}\right)
    $$
Then $c_i\neq 0$ and we set $\rho_i \defeq a_i/c_i$.

\begin{theorem}\label{thm:Pm-hecke-basis}
For every integer $m\ge 1$, there exists $h_m \in M_{-k}^{!,\infty}(\Gamma_0(N);K)$ such that
$$
P_m = -\frac{k!}{m^{k+1}}\sum_{i=1}^d a_{m}(f_i)c_i^{-1}f_i
$$
and
$$
P_{-m} = \frac{k!}{m^{k+1}}\left(\sum_{i=1}^d a_{m}(f_i)\rho_i f_i + a_{m}(f_i)g_i \right) + D^{k+1}h_m\text{.}
$$
\end{theorem}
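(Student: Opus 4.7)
The plan is to mimic the proof of Proposition \ref{prop:sv-periods-rank-2}, applied block by block to the orthogonal decomposition $H^1_{\cusp}(\mathcal{Y}_0(N),V_k)\tensor_{\mathbb{Q}}K \cong \bigoplus_{i=1}^d H_{f_i}\tensor_{K_{f_i}}K$. The key preliminary observation is that $\sv$ commutes with every Hecke operator $T_p$ by (\ref{eq:sv-commute}), since $T_p$ is a morphism in $\mathcal{H}(\mathbb{Q})$; hence $\sv$ preserves each common $T_p$-eigenspace $H_{f_i,\dR}\tensor_{K_{f_i}}\mathbb{R}$, and $\sv([f_i]) = a_i[f_i] + c_i[g_i]$. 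Together with the $\langle\ ,\ \rangle_{\dR}$-orthogonality of the decomposition noted in the excerpt, this upgrades to orthogonality for the Hermitian form $(\ ,\ )_{\dR}$, and within each block (\ref{eq:dR-form-sv-period}) gives $([f_i],[f_i])_{\dR} = -c_i$ (using that $k$ is even).

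For the formula for $P_m$, I would expand $P_m = \sum_i \mu_i f_i$ in the basis of cusp forms and pair against $[f_j]$ using Corollary \ref{coro:derham-poincare} together with the block-diagonality above; the identity $\frac{k!}{m^{k+1}} a_m(f_j) = ([f_j], [P_m])_{\dR} = -\mu_j c_j$ then solves $\mu_j = -\frac{k!}{m^{k+1}} a_m(f_j) c_j^{-1}$, giving the first asserted formula.

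For $P_{-m}$, I would invoke Proposition \ref{prop:poincare-betti-conjugate} to get $\sv([P_m]) = -[P_{-m}]$ in $H^1_{\dR,\cusp}\tensor_{\mathbb{Q}}\mathbb{R}$, then apply the $\mathbb{R}$-linear map $\sv$ (which preserves each block) to the formula for $[P_m]$. Using the identities $\mu_i c_i = -\frac{k!}{m^{k+1}} a_m(f_i)$ and $\mu_i a_i = -\frac{k!}{m^{k+1}} a_m(f_i) \rho_i$ yields
\[
[P_{-m}] = \sum_{i=1}^d \frac{k!}{m^{k+1}} \bigl( a_m(f_i) \rho_i [f_i] + a_m(f_i) [g_i] \bigr)
\]
in cohomology, and Corollary \ref{coro:derahm-whmf} lifts this to an identity in $S^{!,\infty}_{k+2}$ producing some real $h_m \in M_{-k}^{!,\infty}(\Gamma_0(N);\mathbb{R})$.

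The main obstacle, exactly as in the rank 2 case, is ensuring $h_m$ can be chosen in $M_{-k}^{!,\infty}(\Gamma_0(N); K)$ rather than merely in $\mathbb{R}$-rational forms. Taking principal parts at infinity of the identity kills the $\rho_i$-terms (since each $f_i$ is cuspidal), leaving
\[
\mathcal{P}_{D^{k+1} h_m} = q^{-m} - \sum_i \frac{k!}{m^{k+1}} a_m(f_i) \mathcal{P}_{g_i} \in K[q^{-1}],
\]
whence $a_n(h_m) \in K$ for every $n \le -1$. To propagate this to the constant term and then apply Lemma \ref{lemma:principal-part}, I would argue that for $k>0$ the $\mathbb{R}$-linear map $h \mapsto (a_n(h))_{n \le -1}$ on $M_{-k}^{!,\infty}(\Gamma_0(N);\mathbb{R})$ is injective (a kernel element would be a holomorphic modular form of negative weight, hence zero) and defined over $\mathbb{Q}$, so the standard descent identity $V_K \tensor_K \mathbb{R} \cap K^I = V_K$ forces $\mathcal{P}_{h_m} \in K[q^{-1}]$. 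For $k=0$ the same argument works modulo the one-dimensional space of constants, and one simply normalizes $a_0(h_m) = 0$.
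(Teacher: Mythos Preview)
Your proposal is correct and follows essentially the same route as the paper: expand $P_m$ in the Hecke eigenbasis, use the orthogonality of the $[f_i]$ for $(\ ,\ )_{\dR}$ together with Corollary~\ref{coro:derham-poincare} to solve for the coefficients, then apply $\sv$ via Proposition~\ref{prop:poincare-betti-conjugate}, lift via Corollary~\ref{coro:derahm-whmf}, and control the field of definition of $h_m$ by principal parts and Lemma~\ref{lemma:principal-part}. Your treatment of the constant term $a_0(h_m)$ is in fact more careful than the paper's, which simply invokes Lemma~\ref{lemma:principal-part} without isolating this point; your injectivity/normalization argument is the right way to close that small gap.
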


\begin{proof}
  Let $\lambda_i \in \mathbb{R}$ be such that $P_m = \sum_{i=1}^d\lambda_i f_i$. Since the $f_i$ are orthogonal for $( \ , \ )_{\dR}$, Corollary \ref{coro:derham-poincare} yields
  $$
\frac{k!}{m^{k+1}}a_m(f_i) = ([f_i],[P_m])_{\dR} = \sum_{j=1}^d\lambda_j([f_i],[f_j])_{\dR} = \lambda_i([f_i],[f_i])_{\dR} = -c_i \lambda_i\text{.}
$$
This proves the first formula. By Proposition \ref{prop:poincare-betti-conjugate}, $P_{-m}$ and $-\sum_{i=1}^{d}\lambda_i(a_if_i+c_ig_i)$ are Betti conjugate, so that by Corollary \ref{coro:derahm-whmf} there exists $h_m \in M_{-k}^{!,\infty}(\Gamma_0(N))$ such that
$$
P_{-m} = \sum_{i=1}^d \lambda_i(a_if_i+c_ig_i) + D^{k+1}h_m\text{.}
$$
By taking principal parts at infinity and applying Lemma \ref{lemma:principal-part}, we conclude that the Fourier coefficients at infinity of $h_m$ lie in $K$.
\end{proof}

Taking a basis of Poincaré series, we can invert the above formulas.

\begin{theorem}
  Let $m_1,\ldots,m_d\ge 1$ be integers such that $(P_{m_1},\ldots,P_{m_d})$ is a basis of $S_{k+2}(\Gamma_0(N))$, and let $(r_{ij}) = (a_{m_j}(f_i))^{-1} \in \GL_{d}(K)$. Then, for every $1\le j \le d$, there exists $h_j' \in M_{-k}^{!,\infty}(\Gamma_0(N);K)$ such that
  $$
-c_j^{-1}f_j = \frac{1}{k!}\sum_{i=1}^dm_i^{k+1}r_{ij} P_{m_i}
$$
and
$$
\rho_j f_j + g_j = \frac{1}{k!}\left(\sum_{i=1}^dm_i^{k+1}r_{ij}P_{-m_i}\right) + D^{k+1}h'_{j}\text{.}
$$
\end{theorem}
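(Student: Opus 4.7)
The plan is to invert the linear relations established in Theorem \ref{thm:Pm-hecke-basis}. Define the matrix $A=(A_{ij})\in\GL_d(K)$ by $A_{ij}=a_{m_j}(f_i)$, so that by hypothesis its inverse is $R=(r_{ij})$. The first formula of Theorem \ref{thm:Pm-hecke-basis}, evaluated at $m=m_j$, can be rewritten as
$$
\frac{m_j^{k+1}}{k!}P_{m_j}=-\sum_{i=1}^d A_{ij}\,(c_i^{-1}f_i),\qquad 1\le j\le d,
$$
which is a change-of-basis identity inside $S_{k+2}(\Gamma_0(N);\mathbb{R})$. Taking the $K$-linear combination with coefficients $R_{ji}$ and using $\sum_j A_{ij}R_{j\ell}=\delta_{i\ell}$ yields
$$
-c_j^{-1}f_j=\frac{1}{k!}\sum_{i=1}^d m_i^{k+1}\,r_{ij}\,P_{m_i},
$$
which is the first claimed identity. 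Note that invertibility of $A$ is equivalent to $(P_{m_1},\ldots,P_{m_d})$ being a basis of $S_{k+2}(\Gamma_0(N))$: this follows from Theorem \ref{thm:Pm-hecke-basis} together with the non-vanishing of the $c_i$ ensured by Proposition \ref{prop:svp-matrix-polarisation}.

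The second formula is obtained by the same linear inversion applied to the Betti-conjugate side. Let $h_{m_i}\in M_{-k}^{!,\infty}(\Gamma_0(N);K)$ be the forms supplied by Theorem \ref{thm:Pm-hecke-basis}, and set $y_i\defeq \rho_i f_i+g_i$. Evaluating the second formula of Theorem \ref{thm:Pm-hecke-basis} at $m=m_j$ yields
$$
\frac{m_j^{k+1}}{k!}\bigl(P_{-m_j}-D^{k+1}h_{m_j}\bigr)=\sum_{i=1}^d A_{ij}\,y_i,
$$
and combining these identities exactly as above produces
$$
\rho_j f_j+g_j=\frac{1}{k!}\sum_{i=1}^d m_i^{k+1}\,r_{ij}\,P_{-m_i}+D^{k+1}h_j',\qquad h_j'\defeq -\frac{1}{k!}\sum_{i=1}^d m_i^{k+1}\,r_{ij}\,h_{m_i}.
$$
Since $r_{ij}\in K$, $m_i^{k+1}\in\mathbb{Z}$, and each $h_{m_i}\in M_{-k}^{!,\infty}(\Gamma_0(N);K)$ by Theorem \ref{thm:Pm-hecke-basis}, the element $h_j'$ is a $K$-linear combination of forms in $M_{-k}^{!,\infty}(\Gamma_0(N);K)$, hence belongs to this space.

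There is essentially no obstacle to the argument: once the matrix $A$ has been identified, everything reduces to a transparent inversion of the relations of Theorem \ref{thm:Pm-hecke-basis}, and the rationality of the correction term $h_j'$ over $K$ is an automatic consequence of the rationality already established there.
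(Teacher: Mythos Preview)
Your proof is correct. For the first identity your argument coincides with the paper's: both simply invert the change-of-basis matrix coming from Theorem~\ref{thm:Pm-hecke-basis}.

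For the second identity the paper proceeds differently. Instead of inverting the second formula of Theorem~\ref{thm:Pm-hecke-basis}, it argues directly that $\rho_j f_j+g_j$ and $\frac{1}{k!}\sum_i m_i^{k+1}r_{ij}P_{-m_i}$ are both Betti conjugate to $c_j^{-1}f_j$ (using Proposition~\ref{prop:poincare-betti-conjugate} together with the first identity just established), obtains the existence of some $h_j'\in M_{-k}^{!,\infty}(\Gamma_0(N))$ from Corollary~\ref{coro:derahm-whmf}, and then invokes Lemma~\ref{lemma:principal-part} on principal parts to conclude that $h_j'$ has Fourier coefficients in $K$. Your route is more economical: by inverting the second formula of Theorem~\ref{thm:Pm-hecke-basis} you get the explicit expression $h_j'=-\frac{1}{k!}\sum_i m_i^{k+1}r_{ij}h_{m_i}$, and its $K$-rationality is then immediate from that of the $h_{m_i}$. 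The paper's approach, by contrast, does not rely on the $K$-rationality clause of Theorem~\ref{thm:Pm-hecke-basis} and instead reproves it in this inverted setting via the same principal-part argument.
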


\begin{proof}
  Let $(\lambda_{ij}) \in \GL_{d}(\mathbb{R})$ be such that $P_{m_j} = \sum_{i=1}^d\lambda_{ij} f_i$. It follows from Theorem \ref{thm:Pm-hecke-basis} that $\lambda_{ij} = -k!c_i^{-1}a_{m_j}(f_i)m_j^{-k-1}$. In matrix notation,
  $$
 \left( \begin{array}{ccc}
           &  & \\
               & \lambda_{ij}  & \\
           & &
  \end{array} \right) = -k!\left( \begin{array}{ccc}
           c_1&  & \\
               & \ddots  & \\
           & & c_d
  \end{array} \right)^{-1}\left( \begin{array}{ccc}
           &  & \\
               & a_{m_j}(f_i)  & \\
           & &
  \end{array} \right)\left( \begin{array}{ccc}
          m_1 &  & \\
               & \ddots  & \\
           & & m_d
  \end{array} \right)^{-k-1}\text{.}
  $$
  Thus, $f_j = -\frac{1}{k!}\sum_{i=1}^dm_{i}^{k+1}r_{ij}c_jP_{m_i}$ and our first formula follows. By Proposition \ref{prop:poincare-betti-conjugate}, $\rho_jf_j + g_j$ and $\sum_{i=1}^{d}m_{i}^{k+1}r_{ij}P_{-m_i}$ are Betti conjugate, so that by Corollary \ref{coro:derahm-whmf} there exists $h_j' \in M_{-k}^{!,\infty}(\Gamma_0(N))$ such that
$$
\rho_j f_j + g_j = \frac{1}{k!}\left(\sum_{i=1}^dm_i^{k+1}r_{ij}P_{-m_i}\right) + D^{k+1}h'_{j}\text{.}
$$
By taking principal parts at infinity and applying Lemma \ref{lemma:principal-part}, we conclude that the Fourier coefficients at infinity of $h'_j$ lie in $K$.
\end{proof}

We can always choose $g_j$ such that $a_1(g_j)=0$. In particular, by Proposition \ref{prop:poincare-fourier}, we obtain
\begin{equation}\label{eq:formula-rho}
\rho_j =\frac{2\pi (-1)^{\frac{k+2}{2}}}{k!}\sum_{c\ge 1\text{, } N \mid c}\left(\sum_{i=1}^d m_i^{\frac{k+1}{2}}r_{ij}\frac{K(-m_i,1;c)}{c}I_{k+1}\left(\frac{4\pi\sqrt{m_i}}{c}\right)\right) + a_1(h_j')\text{.}
\end{equation}
Let us also remark that, once the classical periods $\omega^+_j, \omega^-_j$ of $f_j$ are known, the entire period matrix
$$
P_j = \left(\begin{array}{cc}
              \omega^+_j & \eta^+_j\\
              i\omega^-_j & i\eta^-_j
            \end{array}\right)
$$
of $H_{f_j}$ is determined by $\omega^+_j,\omega^-_j,\rho_j$, since one always has the relation $\det P_j \in \mathbb{Q}^{\times}(2\pi i)^{k+1}$. As a result, we can obtain `universal' expressions for the quasi-periods of Hecke eigenforms (in the terminology of \cite{BH18}) in terms of explicit series involving Kloosterman sums and special values of Bessel functions.

\end{document}